\newcommand{\sg}{\textnormal{sg}}
\newtheorem{theorem}{Theorem}
\newtheorem{lemma}[theorem]{Lemma}
\newtheorem{corollary}[theorem]{Corollary}
\newtheorem{proposition}[theorem]{Proposition}
\theoremstyle{remark}
\theoremstyle{definition}
\numberwithin{theorem}{section} \numberwithin{equation}{section}
\numberwithin{example}{section}
\title{A double-sum Kronecker-type identity}
\author{Eric T. Mortenson}
\begin{document}

\date{10 August 2016}

\subjclass[2010]{11B65, 11F27}

\keywords{q-series, Hecke-type triple-sums, Appell--Lerch functions}

\begin{abstract}
We prove a double-sum analog of an identity known to Kronecker and then express it in terms of functions studied by Appell and Kronecker's student Lerch, in so doing we show that the double-sum analog is of mixed mock modular form.  We also give related symmetric generaizations.
\end{abstract}

\address{Max-Planck-Institut f\"ur Mathematik, Vivatsgasse 7, 53111 Bonn, Germany}
\address{Universit\"at zu K\"oln, Mathematisches Institut, Weyertal 86-90, 50931 K\"oln, Germany}
\email{etmortenson@gmail.com}
\maketitle
\setcounter{section}{-1}

\section{Notation}

 Let $q$ be a nonzero complex number with $|q|<1$ and define $\mathbb{C}^*:=\mathbb{C}-\{0\}$.  Recall
\begin{gather}
(x)_n=(x;q)_n:=\prod_{i=0}^{n-1}(1-q^ix), \ \ (x)_{\infty}=(x;q)_{\infty}:=\prod_{i\ge 0}(1-q^ix),\notag \\
{\text{and }} \ \ j(x;q):=(x)_{\infty}(q/x)_{\infty}(q)_{\infty}=\sum_{n=-\infty}^{\infty}(-1)^nq^{\binom{n}{2}}x^n,\notag
\end{gather}
where in the last line the equivalence of product and sum follows from Jacobi's triple product identity.    Here $a$ and $m$ are integers with $m$ positive.  Define
\begin{gather*}
J_{a,m}:=j(q^a;q^m), \ \ J_m:=J_{m,3m}=\prod_{i\ge 1}(1-q^{mi}), \ {\text{and }}\overline{J}_{a,m}:=j(-q^a;q^m).
\end{gather*}
We will use the following definition of an Appell-Lerch function \cite{Ap, HM, L1, Zw}:
\begin{equation}
m(x,q,z):=\frac{1}{j(z;q)}\sum_{r=-\infty}^{\infty}\frac{(-1)^rq^{\binom{r}{2}}z^r}{1-q^{r-1}xz}.\label{equation:mdef-eq}
\end{equation}
\section{Introduction}

The following identity was known to Kronecker \cite{Kron1}, \cite[pp. 309-318]{Kron2}, see also A. Weil's monograph for Kronecker's proof \cite[pp. 70-71]{Weil}; however, Kronecker's identity is also a special case of Ramanujan's ${}_{1}\psi_{1}$-summation.  For  $x,y\in \mathbb{C}^*$ where $|q|<|x|<1$ and $y$ neither zero or an integral power of $q$
\begin{equation}
\sum_{r\in \mathbb{Z}}\frac{x^r}{1-yq^{r}}=\frac{(q)_{\infty}^2(xy,q/xy;q)_{\infty}}{(x,q/x,y,q/y;q)_{\infty}}.\label{equation:kronecker-original}
\end{equation}
\noindent If we place the additional restriction $|q|<|y|<1$, we have a more symmetric form,
\begin{equation}
\Big ( \sum_{r,s\ge 0}-\sum_{r,s<0} \Big)q^{rs}x^ry^s=\frac{J_1^3j(xy;q)}{j(x;q)j(y;q)}.\label{equation:kronecker}
\end{equation}
A natural question is what are the higher-dimensional generalizations of (\ref{equation:kronecker-original})?

In \cite{HM}, we expanded Hecke-type double sums in terms of Appell-Lerch functions and theta functions.  As an example, we showed for generic $x,y\in \mathbb{C}^*$,
\begin{align}
\Big( \sum_{r,s\ge 0}&- \sum_{r,s<0}\Big)(-1)^{r+s}x^ry^sq^{\binom{r}{2}+2rs+\binom{s}{2}}\label{equation:n1p1}\\
&=j(y;q)m\big (\frac{q^2x}{y^2},q^3,-1\big )+j(x;q)m\big (\frac{q^2y}{x^2},q^3,-1\big )
- \frac{yJ_{3}^3j(-x/y;q)j(q^2xy;q^3)}
{\overline{J}_{0,3}j(-qy^2/x;q^3)j(-qx^2/y;q^3)}.\notag
\end{align}

In \cite{M9}, we demonstrated how identity (\ref{equation:kronecker}) can by used to determine directly the theta-quotient term of Hecke-type doubles such as in (\ref{equation:n1p1}).  Indeed, one can actually see the right-hand side of (\ref{equation:kronecker}) within the extreme right-hand side of (\ref{equation:n1p1}).   In trying to determine the modularity of so-called Hecke-type triple-sums \cite{HL}, i.e. sums of the form
\begin{align}
\Big ( \sum_{r,s,t \ge 0}&+\sum_{r,s,t<0} \Big)(-1)^{r+s+t}x^ry^sz^tq^{a\binom{r}{2}+b\binom{s}{2}+c\binom{t}{2}+drs+ert+fst},
\end{align}
the natural question is to determine the modularity of 
\begin{align}
\Big ( \sum_{r,s,t \ge 0}&+\sum_{r,s,t<0} \Big)q^{rs+rt+st}x^ry^sz^t,
\end{align}
which would be a higher-dimensional generalization of (\ref{equation:kronecker}).  It turns out that there is a double-sum analog of (\ref{equation:kronecker-original}), it is our result, and it appears to be new.

\begin{theorem}\label{theorem:result} For  $x,y,z\in \mathbb{C}^*$ where $|q|<|y|<1$,  $|q|<|z|<1$, and $x$ neither zero or an integral power of $q$,
{\allowdisplaybreaks \begin{align}
\Big ( \sum_{s,t \ge 0}&-\sum_{s,t<0} \Big)\frac{q^{st}y^sz^t}{1-xq^{s+t}} \label{equation:thm-result}\\
&=  \frac{(yz,q^2/yz;q^2)_{\infty}}{(y,z,q/y,q/z;q)_{\infty}}
\frac{(q;q)_{\infty}^2}{(q^2;q^2)_{\infty}} \sum_{k\in \mathbb{Z}}\frac{(-1)^kq^{k^2}(yz)^k}{1+q^{2k}x}\notag \\
&\ \ \ \ \ + \frac{(xy,q^2/xy;q^2)_{\infty}}{(x,y,q/x,q/y;q)_{\infty}}
 \frac{(q;q)_{\infty}^2}{(q^2;q^2)_{\infty}}\sum_{k\in \mathbb{Z}} \frac{(-1)^kq^{k^2}(xy)^k}{1+q^{2k}z}\notag \\
&\ \ \ \ \ -2  \frac{(q^2;q^2)_{\infty}^3}{(x,y,z,q/x,q/y,q/z;q)_{\infty}} \frac{(xy,xz,yz,q^2/xy,q^2/xz,q^2/yz;q^2)_{\infty}}{(-x,-y,-z,-q^2/x,-q^2/y,-q^2/z;q^2)_{\infty}}.\notag \\
&\ \ \ \ \ + \frac{(xz,q^2/xz;q^2)_{\infty}}{(x,z,q/x,q/z;q)_{\infty}}
 \frac{(q;q)_{\infty}^2}{(q^2;q^2)_{\infty}}  \sum_{k\in \mathbb{Z}} \frac{(-1)^kq^{k^2}(xz)^k}{1+q^{2k}y}.\notag 
\end{align}}%
\end{theorem}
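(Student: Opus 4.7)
The plan is to prove the identity by viewing both sides as meromorphic functions of $x$ (with $y$, $z$, $q$ regarded as parameters) and matching their principal parts at each pole. The left-hand side is meromorphic in $x$ with simple poles at $x = q^{-n}$ for $n \in \mathbb{Z} \setminus \{-1\}$; the exclusion $n = -1$ arises because neither of the cones $s, t \ge 0$ nor $s, t < 0$ contains a pair with $s + t = -1$. The residue at each such $x = q^{-n}$ is a finite sum over the pairs $(s, t)$ in the appropriate cone with $s + t = n$. On the right-hand side, the first and third terms contribute poles in $x$ at $x = -q^{-2k}$ for $k \in \mathbb{Z}$ (from the factors $1 + q^{2k} x$ and from $(-x, -q^2/x; q^2)_\infty$ in the third-term denominator), while the second, third, and fourth terms contribute poles at $x = q^n$ for all $n \in \mathbb{Z}$ via $(x, q/x; q)_\infty$.

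I would first verify two pole cancellations on the right-hand side: (i) the residues at $x = -q^{-2k}$ from the first and third terms sum to zero, and (ii) the residues at $x = q$ among the second, third, and fourth terms sum to zero, matching the absence of this pole on the left-hand side. Both cancellations reduce to theta-function identities provable via the Jacobi triple product and the transformation $j(Xq^a; q) = (-X)^{-a} q^{-\binom{a}{2}} j(X; q)$.

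Next, for each $n \in \mathbb{Z} \setminus \{-1\}$, I would match the residue at $x = q^{-n}$. The residue on the right-hand side comes from terms 2, 3, and 4, and a priori involves infinite Appell--Lerch sums; however, specializing $x = q^{-n}$ inside the sums $\sum_k (-1)^k q^{k^2} (xy)^k / (1 + q^{2k} z)$ and $\sum_k (-1)^k q^{k^2} (xz)^k / (1 + q^{2k} y)$ reduces them to one-dimensional Kronecker-type sums that can be evaluated in closed form via (\ref{equation:kronecker-original}). Combining these with the theta-quotient contribution from term 3 should then collapse the sum to the finite residue on the left-hand side. Once poles and residues match, the difference of the two sides is entire in $x$, and checking agreement at a convenient value (for instance, by comparing constant terms in an expansion in powers of $x$) will establish the identity.

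The main obstacle I anticipate is the theta-function manipulation in the residue-matching step: combining the three distinct Appell--Lerch residue contributions with the theta-quotient residue into the compact finite expression on the left-hand side will likely require a three-term theta relation of Weierstrass type, or an equivalent combinatorial identity on theta quotients. A natural alternative would be to expand $(1 - xq^{s+t})^{-1}$ geometrically and apply the symmetric Kronecker identity (\ref{equation:kronecker}) to the resulting Hecke-type inner double sum, but the convergence regions of the two geometric expansions (for $s,t \ge 0$ versus $s,t < 0$) do not overlap, so direct resummation fails and the residue comparison above appears cleaner.
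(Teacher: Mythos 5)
Your residue-matching program is essentially the content of the paper's Section 4 (Lemmas 4.2--4.6): one shows that the residues of the difference $H(x,y,z;q)$ at $x=q^{n}$ and at $x=-q^{2n}$ all vanish, so that $H$ is analytic for $x\neq 0$. (A small correction of emphasis: the Appell--Lerch sums at $x=q^{n}$ are not evaluated ``in closed form via Kronecker''; the paper instead uses the functional equations of $m(x,q,z)$, in particular the iterated shift (\ref{equation:mxqz-induction}) and the change-of-$z$ formula of Proposition \ref{proposition:changing-z}, which is exactly the Weierstrass-type three-term relation you anticipate needing.)

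The genuine gap is your final step. A function analytic on $\mathbb{C}^{*}$ has a \emph{doubly infinite} Laurent expansion $H=\sum_{m\in\mathbb{Z}}C_{m}x^{m}$, with possible essential singularities at $0$ and $\infty$; matching poles and residues and then ``checking agreement at a convenient value'' or comparing constant terms pins down at most one coefficient and cannot force $H\equiv 0$. The paper closes the argument with a second, independent ingredient that your proposal omits entirely: both sides satisfy the same inhomogeneous functional equation under $x\mapsto q^{2}x$ (Proposition \ref{proposition:prop-functional}), so the difference satisfies the homogeneous equation $H(q^{2}x,y,z;q)=\frac{xq}{yz}H(x,y,z;q)$. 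Feeding this into the Laurent expansion gives $C_{k}=y^{-k}z^{-k}q^{-k^{2}}C_{0}$, and since $\sum_{k}q^{-k^{2}}(x/yz)^{k}$ diverges for $|q|<1$, analyticity on $\mathbb{C}^{*}$ forces $C_{0}=0$ and hence $H\equiv 0$. Without establishing this functional equation (or some equivalent growth control at $x\to 0$ and $x\to\infty$), your argument stops short of the conclusion.
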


\noindent Restricting $x$ and using the Appell-Lerch function notation, we have the symmetric

\begin{corollary} For  $x,y,z\in \mathbb{C}^*$ where $|q|<|x|<1$, $|q|<|y|<1$, and $|q|<|z|<1$,
\begin{align}
\Big ( \sum_{r,s,t \ge 0}&+\sum_{r,s,t<0} \Big)q^{rs+rt+st}x^ry^sz^t\\
&=  \frac{J_1^3j(yz;q)}{j(y;q)j(z;q)} m \big (-\frac{qx}{yz},q^{2},qyz \big )+ \frac{J_1^3j(xy;q)}{j(x;q)j(y;q)} m \big (-\frac{qz}{xy},q^{2},qxy \big )\notag \\
&\ \ \ \ \ -2 \frac{J_1^3J_2^3}{j(x;q)j(y;q)j(z;q)}\frac{j(xy;q^2)j(xz;q^2)j(yz;q^2)}{j(-x;q^2)j(-y;q^2)j(-z;q^2)}\notag\\
&\ \ \ \ \ + \frac{J_1^3j(xz;q)}{j(x;q)j(z;q)} m \big (-\frac{qy}{xz},q^{2}, qxz\big ).\notag 
\end{align}
\end{corollary}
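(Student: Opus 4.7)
The plan is to derive the Corollary as a direct algebraic reformulation of Theorem \ref{theorem:result}, with no new analytic input. Under the hypothesis $|q|<|x|,|y|,|z|<1$, I would first verify that the two left-hand sides agree. Summing the geometric series in $r$ first: for $s,t\ge 0$ we have $|xq^{s+t}|\le |x|<1$ and $\sum_{r\ge 0}(xq^{s+t})^r=1/(1-xq^{s+t})$; for $s,t<0$ the condition $|q|<|x|<1$ yields $|xq^{s+t}|=|x|/|q|^{|s+t|}>1$ (since $|s+t|\ge 2$ forces $|q|^{|s+t|}<|q|<|x|$), so $\sum_{r<0}(xq^{s+t})^r=-1/(1-xq^{s+t})$. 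Combining,
\begin{equation*}
\Big(\sum_{r,s,t\ge 0}+\sum_{r,s,t<0}\Big)q^{rs+rt+st}x^ry^sz^t=\Big(\sum_{s,t\ge 0}-\sum_{s,t<0}\Big)\frac{q^{st}y^sz^t}{1-xq^{s+t}},
\end{equation*}
which is the left-hand side of Theorem \ref{theorem:result}.

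For the right-hand side, the three $k$-sums in Theorem \ref{theorem:result} are Appell--Lerch functions in disguise. A direct unwinding of the definition (\ref{equation:mdef-eq}) with base $q^2$ yields
\begin{equation*}
\sum_{k\in\Z}\frac{(-1)^kq^{k^2}\alpha^k}{1+q^{2k}\beta}=j(q\alpha;q^2)\,m\Big(-\frac{q\beta}{\alpha},\,q^2,\,q\alpha\Big),
\end{equation*}
so the three $k$-sums on the right-hand side of Theorem \ref{theorem:result} become $j(qyz;q^2)\,m(-qx/yz,q^2,qyz)$ and its two cyclic permutations in $(x,y,z)$. It then remains to convert every Pochhammer prefactor to the $j(\cdot;q),J_n$ notation via $(w,q/w;q)_\infty=j(w;q)/J_1$, $(w,q^2/w;q^2)_\infty=j(w;q^2)/J_2$, and $(-w,-q^2/w;q^2)_\infty=j(-w;q^2)/J_2$.

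The essential identity is the dissection
\begin{equation*}
J_1\cdot j(w;q^2)\,j(qw;q^2)=J_2^2\cdot j(w;q),
\end{equation*}
immediate from splitting $(w;q)_\infty=(w;q^2)_\infty(qw;q^2)_\infty$ into its even- and odd-indexed subproducts. Applying it to combine the factor $j(yz;q^2)$ already present in the prefactor with the $j(qyz;q^2)$ introduced by rewriting the $k$-sum produces exactly the coefficient $J_1^3j(yz;q)/(j(y;q)j(z;q))$ of the first Appell--Lerch term in the Corollary; the other two Appell--Lerch terms follow by cyclically permuting $(x,y,z)$. Conversion of the theta-quotient term requires no further identities, only the Pochhammer-to-$j$ dictionary, and is pure bookkeeping. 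The main obstacle is simply the dissection identity above: without collapsing the $q^2$-theta functions back to a single $q$-theta function, the Corollary's clean prefactors $J_1^3j(yz;q)/(j(y;q)j(z;q))$ do not emerge from the more cluttered form in Theorem \ref{theorem:result}.
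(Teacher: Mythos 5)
Your proposal is correct and is exactly the route the paper intends (the paper only says ``restricting $x$ and using the Appell--Lerch function notation''): the geometric-series resummation in $r$ recovers the left side of Theorem \ref{theorem:result}, the identification $\sum_k(-1)^kq^{k^2}\alpha^k/(1+q^{2k}\beta)=j(q\alpha;q^2)\,m(-q\beta/\alpha,q^2,q\alpha)$ follows directly from (\ref{equation:mdef-eq}), and your dissection identity is precisely (\ref{equation:j-mod-2}), which collapses the prefactors to $J_1^3j(yz;q)/(j(y;q)j(z;q))$ and its permutations. All steps check out.
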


\noindent In particular, where Kronecker's (\ref{equation:kronecker-original}) is modular, we see that our new Theorem \ref{theorem:result} is in fact mixed mock modular \cite{Za}.  

One could also ask whether or not there are analogous higher-dimensional generalizations of Hickerson's \cite[$(1.30)$, $(1.32)$]{H1}.  If we introduced the more compact notation
\begin{equation*}
\sg(r):=
\begin{cases}
1 \textup{ if }r\ge0,\\
-1 \textup{ if }r<0,
\end{cases}
\end{equation*}
where $r$ is an integer, Hickerson's two identities \cite[$(1.30)$, $(1.32)$]{H1} read respectively
\begin{gather}
\sum_{\substack{ \sg(r)=\sg(s)\\r\equiv s \pmod 2}}\sg(r)q^{rs}x^ry^s
=\frac{J_{2,4}j(qxy;q^2)j(-qxy^{-1};q^2)j(x^2y^2;q^4)}{j(x^2;q^2)j(y^2;q^2)},\label{equation:HThm1.6}\\
\sum_{\substack{ \sg(r)=\sg(s)\\r\not \equiv s \pmod 2}}\sg(r)q^{rs}x^ry^s
=\frac{yJ_{2,4}j(xy;q^2)j(-xy^{-1};q^2)j(q^2x^2y^2;q^4)}{j(x^2;q^2)j(y^2;q^2)},\label{equation:Hid1pt32}
\end{gather}
where for both identities one has the restrictions $|q|<|x|<1$ and $|q|<|y|<1$.  

It turn out, that when $r$, $s$, and $t$ are required to have the same parity, we have
\begin{theorem} \label{theorem:same-parity} For  $x,y,z\in \mathbb{C}^*$ where $|q|<|x|<1$, $|q|<|y|<1$, and $|q|<|z|<1$,
\begin{align}
&\sum_{\substack{ \sg(r)=\sg(s)=\sg(t)\\r\equiv s \equiv t \pmod 2}}q^{rs+rt+st}x^ry^sz^t\label{equation:parity-same}\\
&\ \ \ \ \ \ \ \ \ \ = \frac{J_4^3j(y^2z^2;q^4)}{j(y^2;q^4)j(z^2;q^4)}  m \big (-\frac{qx}{yz},q^{2},-yz \big ) 
+\frac{J_4^3j(x^2z^2;q^4)}{j(x^2;q^4)j(z^2;q^4)}  m \big (-\frac{qy}{xz},q^{2},-xz \big ) \notag\\
&\ \ \ \ \ \ \ \ \ \ \ \ \ \ \ +\frac{J_1^3J_2^3}{j(x;q)j(y;q)j(z;q)} \frac{j(xy;q^2)j(xz;q^2)j(yz;q^2)}{j(-x;q^2)j(-y;q^2)j(-z;q^2)}\notag\\
&\ \ \ \ \ \ \ \ \ \ \ \ \ \ \  + \frac{J_4^3j(x^2y^2;q^4)}{j(x^2;q^4)j(y^2;q^4)}  m \big (-\frac{qz}{xy},q^{2},-xy \big ).  \notag
\end{align}
\end{theorem}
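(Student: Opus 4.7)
The plan is to apply the parity-selector identity
\[
\frac{1}{4}\bigl(1+(-1)^{r+s}+(-1)^{r+t}+(-1)^{s+t}\bigr)=\begin{cases}1&\text{if }r\equiv s\equiv t\pmod 2,\\ 0&\text{otherwise,}\end{cases}
\]
which follows from $(-1)^{r+s}(-1)^{r+t}=(-1)^{s+t}$: any sign pattern consistent with this constraint either has all three signs $+1$ or exactly two $-1$'s, and the four-term sum vanishes in the latter case. Inserting this selector into the LHS of Theorem \ref{theorem:same-parity} and using that the sign flips $(x,y,z)\mapsto(-x,-y,z)$, $(-x,y,-z)$, $(x,-y,-z)$ preserve both the orthant constraint $\sg(r)=\sg(s)=\sg(t)$ and the convergence hypothesis $|q|<|x|,|y|,|z|<1$, one obtains
\[
\text{LHS}=\tfrac{1}{4}\bigl[T(x,y,z)+T(-x,-y,z)+T(-x,y,-z)+T(x,-y,-z)\bigr],
\]
where $T(a,b,c)$ denotes the triple sum on the LHS of the preceding Corollary. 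Applying the Corollary to each of the four $T$'s yields twelve Appell--Lerch summands (three cyclic classes of four) plus four explicit theta quotients.

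Within the ``$x$-class,'' the four $m$-function first arguments all collapse to $-qx/yz$ (since each admissible sign flip of $(x,y,z)$ leaves this expression invariant), while the third arguments split into two copies of $+qyz$ and two of $-qyz$. The plan here is to invoke the $z$-change identity
\[
m(\alpha,q,z_1)-m(\alpha,q,z_0)=\frac{z_0\,J_1^3\,j(z_1/z_0;q)\,j(\alpha z_0 z_1;q)}{j(z_0;q)\,j(z_1;q)\,j(\alpha z_0;q)\,j(\alpha z_1;q)}
\]
from \cite[Prop.~3.2]{HM} (applied with $q$ replaced by $q^2$) to convert each of $m(-qx/yz,q^2,\pm qyz)$ into the target $m(-qx/yz,q^2,-yz)$ plus a theta-quotient remnant. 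The four resulting prefactors, built from $j(\pm yz;q)/[j(\pm y;q)j(\pm z;q)]$, are then combined via the duplication identities $j(u;q)j(-u;q)=j(u^2;q^2)J_1^2/J_2$ and $j(u;q)=j(-qu^2;q^4)-u\,j(-q^3u^2;q^4)$, together with the base-change $j(u;q)=j(u;q^2)j(qu;q^2)J_1/J_2^2$, to produce the target coefficient $J_4^3\,j(y^2z^2;q^4)/[j(y^2;q^4)j(z^2;q^4)]$. The ``$y$-class'' and ``$z$-class'' are handled symmetrically.

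It remains to verify that the four Corollary theta quotients (each scaled by $-1/2$ after the overall averaging factor $1/4$) combine with the three batches of theta remnants produced above to give exactly the single theta quotient
$J_1^3J_2^3\,j(xy;q^2)j(xz;q^2)j(yz;q^2)/[j(x;q)j(y;q)j(z;q)\,j(-x;q^2)j(-y;q^2)j(-z;q^2)]$
with coefficient $+1$, as claimed. The main obstacle will be precisely this theta-function bookkeeping: after clearing denominators, one is left with a nontrivial infinite-product identity relating many theta factors in bases $q$, $q^2$, and $q^4$, which I would attack by reducing everything to base $q^4$ via the duplication formulas and matching factor by factor. A useful sanity check is that specializing one variable (for instance taking $t=0$ on the LHS before the parity decomposition) should recover Hickerson's double-sum identities \cite[(1.30),(1.32)]{H1}, constraining both the signs and the overall normalization.
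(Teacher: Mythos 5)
Your route is genuinely different from the paper's. The paper does not deduce Theorem \ref{theorem:same-parity} from the Corollary at all: in Section \ref{section:same} it redefines $F$ and $G$, re-runs the whole machinery of Sections \ref{section:functional}--\ref{section:proof} (a $x\mapsto q^2x$ functional equation, residue cancellation at $x_0=q^n$ and $x_0=-q^{2n}$, and the Laurent-coefficient/divergence argument), and so obtains the result self-contained. You instead project onto the parity class with the selector $\tfrac14(1+(-1)^{r+s}+(-1)^{r+t}+(-1)^{s+t})$ and apply the Corollary to $T(x,y,z)$, $T(-x,-y,z)$, $T(-x,y,-z)$, $T(x,-y,-z)$; the selector is correct, the sign flips do preserve the orthant condition and the annulus hypotheses, the first arguments of each $m$-class do collapse (e.g.\ to $-qx/yz$ with third arguments $qyz,-qyz,-qyz,qyz$), and Proposition \ref{proposition:changing-z} (with $q\to q^2$) is the right tool to normalize the third arguments. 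What your route buys is conceptual: it explains where the $q^4$-theta coefficients and the sign change $-2\to+1$ on the theta quotient come from, reusing Theorem \ref{theorem:result}. What it costs is that all the real work is pushed into the deferred ``bookkeeping,'' which is more than factor-matching: for instance the $x$-class coefficient requires
\begin{equation*}
\frac{J_1^3}{4}\Big[\frac{j(yz;q)}{j(y;q)j(z;q)}+\frac{j(yz;q)}{j(-y;q)j(-z;q)}+\frac{j(-yz;q)}{j(-y;q)j(z;q)}+\frac{j(-yz;q)}{j(y;q)j(-z;q)}\Big]=\frac{J_4^3\,j(y^2z^2;q^4)}{j(y^2;q^4)j(z^2;q^4)},
\end{equation*}
which after the splitting $j(u;q)=j(-qu^2;q^4)-u\,j(-q^3u^2;q^4)$ reduces to a genuine two-term theta addition formula (not a product rearrangement), and the remnants from Proposition \ref{proposition:changing-z} together with the four Corollary theta quotients must then be shown to telescope to the single quotient with coefficient $+1$; these identities are true and provable by the standard elliptic-function argument, but until they are carried out the proof is incomplete. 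One small caveat: your proposed sanity check of recovering Hickerson's (\ref{equation:HThm1.6})--(\ref{equation:Hid1pt32}) by ``taking $t=0$'' does not quite work, since fixing $t=0$ inside the constraint $\sg(r)=\sg(s)=\sg(t)$ forces $r,s\ge 0$ and so does not produce Hickerson's two-sided double sums; a cleaner check is numerical verification of the $q$-expansion.
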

\noindent When $r$, $s$, and $t$ do not all have the same parity, we have for example
\begin{theorem} \label{theorem:different-parity} For  $x,y,z\in \mathbb{C}^*$ where $|q|<|x|<1$, $|q|<|y|<1$, and $|q|<|z|<1$,
\begin{align}
&\sum_{\substack{ \sg(r)=\sg(s)=\sg(t)\\r\equiv s \not \equiv t \pmod 2}}q^{rs+rt+st}x^ry^sz^t\label{equation:parity-dif}\\
&= z \frac{J_4^3j(q^2y^2z^2;q^4)}{j(q^2y^2;q^4)j(z^2;q^4)}  m \big (-\frac{qx}{yz},q^{2},-qyz \big )  \notag
+ z \frac{J_4^3j(q^2x^2z^2;q^4)}{j(q^2x^2;q^4)j(z^2;q^4)} m \big (-\frac{qy}{xz},q^{2}, -qxz\big ) \notag\\
&\ \ \ \ \ -z \frac{J_1^3J_2^3}{j(x;q)j(y;q)j(z;q)} \frac{j(xy;q^2)j(qxz;q^2)j(qyz;q^2)}{j(-xq;q^2)j(-yq;q^2)j(-z;q^2)}\notag\\
&\ \ \ \ \  
 - \frac{q}{xy} \frac{J_4^3j(x^2y^2;q^4)}{j(q^2x^2;q^4)j(q^2y^2;q^4)}  m \big (-\frac{qz}{xy},q^{2},-xy \big ).\notag
\end{align}
\end{theorem}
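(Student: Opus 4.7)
The plan is to derive Theorem \ref{theorem:different-parity} from the symmetric corollary of Theorem \ref{theorem:result} by a sign-symmetrisation in $x$ and $y$, and then to subtract the same-parity contribution of Theorem \ref{theorem:same-parity}.

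Write
\[F(x,y,z) := \Big(\sum_{r,s,t\ge 0} + \sum_{r,s,t<0}\Big) q^{rs+rt+st}\,x^r y^s z^t\]
for the left-hand side of the corollary following Theorem \ref{theorem:result}. Replacing $(x,y,z)$ by $(-x,-y,z)$ inserts a factor $(-1)^{r+s}$ into the summand while preserving the same-sign constraint $\sg(r)=\sg(s)=\sg(t)$, so $\tfrac{1}{2}[F(x,y,z)+F(-x,-y,z)]$ equals the sum over triples $(r,s,t)$ with $\sg(r)=\sg(s)=\sg(t)$ and $r\equiv s\pmod 2$. Splitting this by the parity of $t$ gives exactly the sum of the left-hand sides of \eqref{equation:parity-same} and \eqref{equation:parity-dif}. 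Hence
\[\textrm{LHS of }\eqref{equation:parity-dif} \;=\; \tfrac{1}{2}\bigl[F(x,y,z)+F(-x,-y,z)\bigr] \;-\; \textrm{LHS of }\eqref{equation:parity-same}.\]

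Performing the corresponding operation on the right-hand side identities, the middle Appell--Lerch term $m(-qz/xy,q^2,qxy)$ of the corollary keeps its arguments under $(x,y,z)\mapsto(-x,-y,z)$ because $xy$ is invariant, but its theta prefactor changes via $j(x;q)j(y;q)\mapsto j(-x;q)j(-y;q)$; the other two Appell--Lerch terms have their third arguments flipped from $qyz$ and $qxz$ to $-qyz$ and $-qxz$, with theta prefactors modified accordingly; and the triple-theta quotient picks up sign changes throughout. Averaging with the original corollary and subtracting the right-hand side of Theorem \ref{theorem:same-parity} yields a linear combination of $m$-functions with assorted third arguments, plus a collection of theta quotients. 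To put every $m$-function into the three target forms of \eqref{equation:parity-dif}, namely $m(-qx/yz,q^2,-qyz)$, $m(-qy/xz,q^2,-qxz)$, and $m(-qz/xy,q^2,-xy)$, we apply the ``changing $z$'' identity
\[m(x,q,z_1)-m(x,q,z_0)=\frac{z_0 J_1^3\,j(z_1/z_0;q)\,j(xz_0z_1;q)}{j(z_0;q)\,j(z_1;q)\,j(xz_0;q)\,j(xz_1;q)}\]
from \cite{HM} to rewrite $m(\cdot,q^2,qyz)$ and $m(\cdot,q^2,-yz)$ in terms of $m(\cdot,q^2,-qyz)$ (modulo explicit theta corrections), and symmetrically for the role of $y$ and for the middle term.

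The main obstacle is the residual theta identity. After gathering the coefficients of the three target $m$-functions, matching them with the prefactors in \eqref{equation:parity-dif}, and accumulating every leftover theta quotient (from the sign-symmetrised corollary, from Theorem \ref{theorem:same-parity}, and from the changing-$z$ corrections), one has to show that the net remainder collapses to the single triple-product term
\[-z\,\frac{J_1^3J_2^3}{j(x;q)\,j(y;q)\,j(z;q)}\,\frac{j(xy;q^2)\,j(qxz;q^2)\,j(qyz;q^2)}{j(-qx;q^2)\,j(-qy;q^2)\,j(-z;q^2)}.\]
This will require standard infinite-product identities such as $j(x;q)=J_1\,j(x;q^2)\,j(qx;q^2)/J_2^2$, $j(x;q)j(-x;q)=J_1^2\,j(x^2;q^2)/J_2$, the shift $j(q^n x;q)=(-1)^n q^{-\binom{n}{2}}x^{-n}j(x;q)$, and quintuple-type three-term relations to reconcile the $j(\cdot;q^4)$ pieces inherited from Theorem \ref{theorem:same-parity} against the $j(\cdot;q^2)$ and $j(\cdot;q)$ pieces inherited from the corollary. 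Verifying this cancellation, a nontrivial theta identity in three variables, is where the bulk of the calculation and the real risk of error lies.
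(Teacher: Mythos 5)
Your overall strategy is genuinely different from the paper's and is not circular: the Corollary to Theorem \ref{theorem:result} and Theorem \ref{theorem:same-parity} are both established independently, and your left-hand-side bookkeeping is correct --- averaging $F(x,y,z)$ with $F(-x,-y,z)$ does isolate the triples with $r\equiv s\pmod 2$, and subtracting the same-parity sum leaves exactly the left side of \eqref{equation:parity-dif}. Your tracking of how the three Appell--Lerch terms transform under $(x,y,z)\mapsto(-x,-y,z)$ is also accurate. For comparison, the paper does none of this: in Section \ref{section:different} it re-encodes the parity-restricted triple sum as a pair of double sums with denominators $1-x^2q^{4s+4t+2}$, proves that this $F$ and the proposed $G$ satisfy the same first-order functional equation in $x\mapsto q^2x$, verifies by residue computations (Lemmas \ref{lemma:F-n-diff}--\ref{lemma:G2-n-neg-diff}) that $F-G$ is analytic for $x\neq 0$, and then kills the difference by the Laurent-coefficient argument of Section \ref{section:proof}. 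That method never has to confront a closed-form theta identity relating the $j(\cdot;q)$, $j(\cdot;q^2)$ and $j(\cdot;q^4)$ pieces.

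That confrontation is precisely where your proposal has a genuine gap. After normalising the third arguments of the $m$-functions via Proposition \ref{proposition:changing-z}, everything reduces to showing that the accumulated theta quotients --- the averaged triple-product term from the corollary, the triple-product term from Theorem \ref{theorem:same-parity}, the mismatches between the $m$-coefficients $\tfrac12\bigl[\tfrac{J_1^3j(yz;q)}{j(y;q)j(z;q)}+\tfrac{J_1^3j(-yz;q)}{j(-y;q)j(z;q)}\bigr]$ and their $q^4$-level counterparts, and all the changing-$z$ correction terms --- collapse to the single product $-z\,\tfrac{J_1^3J_2^3}{j(x;q)j(y;q)j(z;q)}\tfrac{j(xy;q^2)j(qxz;q^2)j(qyz;q^2)}{j(-qx;q^2)j(-qy;q^2)j(-z;q^2)}$. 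You state this needs to be shown and name plausible tools, but you do not carry it out, and it is not a routine consequence of \eqref{equation:j-elliptic}--\eqref{equation:j-mod-dec}: it is a three-variable theta identity of roughly the same depth as the theorem's theta content itself (the two-variable coefficient matchings alone already amount to Hickerson's \eqref{equation:HThm1.6} and \eqref{equation:Hid1pt32}). Until that cancellation is proved, the argument establishes only that Theorem \ref{theorem:different-parity} is equivalent, modulo an explicit but unverified theta identity, to results already in the paper; it does not yet prove the theorem.
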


In Section \ref{section:notation}, we recall useful facts on theta functions and Appell-Lerch functions.  In Section \ref{section:functional}, we demonstrate that the left and right-hand sides of Theorem \ref{theorem:result} satisfy the same functional equation.  In Section \ref{section:analytic}, we show that the difference between the left and right-hand sides of Theorem \ref{theorem:result} is analytic for $x\ne0$.  This we call the difference function.  In Section \ref{section:proof}, we prove Theorem \ref{theorem:result} by expressing the difference function in terms of a Laurent series and then showing how the functional equations of Section \ref{section:functional} force the difference function to be zero.  In Sections \ref{section:same} and \ref{section:different}, we prove Theorems \ref{theorem:same-parity} and \ref{theorem:different-parity}, respectively.  In Section \ref{section:starting}, we sketch how one can guess the right-hand side of our three new theorems up to a theta function.  We also point out ideas for alternate proofs.

\section*{Acknowledgements}
We would like to thank Christian Krattenthaler, Wadim Zudilin, Ole Warnaar, and the two referees for helpful comments and suggestions.  In particular, we would like to thank the second referee who suggested finding generalizations of (\ref{equation:HThm1.6}) and (\ref{equation:Hid1pt32}).

\section{Preliminaries}\label{section:notation}

We have the general identities:
\begin{subequations}
{\allowdisplaybreaks \begin{gather}
j(q^n x;q)=(-1)^nq^{-\binom{n}{2}}x^{-n}j(x;q), \ \ n\in\mathbb{Z},\label{equation:j-elliptic}\\
j(x;q)=j(q/x;q)=-xj(x^{-1};q)\label{equation:j-inv},\\
j(x;q)={J_1}j(x;q^2)j(xq;q^2)/{J_2^2}, \label{equation:j-mod-2}\\
j(x^2;q^2)={J_2}j(x;q)j(-x;q)/{J_1^1}. \label{equation:j-mod-dec}
\end{gather}}%
\end{subequations}

In addition, the following proposition will be useful in computing residues. 
\begin{proposition} \cite[Theorem 1.3]{H1} \label{proposition:H1Thm1.3} Define $G(z):={1}/{j(\beta z^b;q^m)}.$  $G(z)$ is meromorphic for $z\ne 0$ with simple poles at points $z_0$ such that $z_0^b=q^{km}/\beta$.  The residue at such $z_0$ is ${(-1)^{k+1}q^{m\binom{k}{2}}z_0}/{bJ_m^3}$.
\end{proposition}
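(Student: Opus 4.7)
\medskip
\noindent\textbf{Proof proposal for Proposition \ref{proposition:H1Thm1.3}.}

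The plan is to locate the zeros of $j(\beta z^b;q^m)$ from its product expansion, verify their simplicity, and then pin down the residue of $1/j(\beta z^b;q^m)$ by reducing to a computation at the single point $x=1$ via the quasi-periodicity relation \eqref{equation:j-elliptic}. First I would write
\[
j(\beta z^b;q^m)=(\beta z^b;q^m)_{\infty}\bigl(q^m/(\beta z^b);q^m\bigr)_{\infty}(q^m;q^m)_{\infty},
\]
and read off that, in the variable $x=\beta z^b$, the zeros occur precisely when $x=q^{km}$ for some $k\in\mathbb{Z}$ (from a single factor $(1-q^{mi}x)$ or $(1-q^{m(i+1)}/x)$), and that each is simple. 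Substituting $x=\beta z^b$ and invoking the fact that $z\mapsto z^b$ is a local biholomorphism on $\mathbb{C}^*$, I would conclude that $G(z)$ is meromorphic for $z\neq 0$ with simple poles exactly at the $z_0$ satisfying $z_0^b=q^{km}/\beta$.

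Next I would reduce the residue computation to a single base case $x=1$. Setting $x=q^{km}u$ in \eqref{equation:j-elliptic} (with base $q^m$) gives
\[
j(q^{km}u;q^m)=(-1)^k q^{-m\binom{k}{2}}u^{-k}\,j(u;q^m),
\]
so near $u=1$ we only need the leading Taylor coefficient of $j(u;q^m)$. Factoring the infinite product as $j(u;q^m)=(1-u)\,(q^mu;q^m)_{\infty}(q^m/u;q^m)_{\infty}(q^m;q^m)_{\infty}$ and evaluating the remaining three factors at $u=1$ yields
\[
j'(1;q^m)=-(q^m;q^m)_{\infty}^3=-J_m^3,
\]
and consequently
\[
j(u;q^m)=-J_m^3(u-1)+O\bigl((u-1)^2\bigr).
\]
Combining with the quasi-periodicity,
\[
j(q^{km}u;q^m)=(-1)^{k+1}q^{-m\binom{k}{2}}J_m^{\,3}(u-1)+O\bigl((u-1)^2\bigr)
\quad\text{as }u\to1.
\]

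Finally, I would transport this back to the $z$-variable. Writing $\beta z^b=q^{km}u$, so that $u-1=(\beta z^b-q^{km})/q^{km}$, and using $\beta z^b-q^{km}=\beta(z^b-z_0^b)\sim b\beta z_0^{b-1}(z-z_0)$ together with the relation $\beta z_0^{b-1}=q^{km}/z_0$, I would obtain
\[
j(\beta z^b;q^m)\sim (-1)^{k+1}q^{-m\binom{k}{2}}\,J_m^{\,3}\cdot\frac{b}{z_0}(z-z_0)
\quad\text{as }z\to z_0,
\]
and inverting this gives the claimed residue $(-1)^{k+1}q^{m\binom{k}{2}}z_0/(bJ_m^3)$. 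The only real obstacle is bookkeeping the several signs and powers of $q$ that appear through the quasi-periodicity and the chain rule; once one fixes notation and applies \eqref{equation:j-elliptic} with base $q^m$, everything collapses to the single derivative $j'(1;q^m)=-J_m^3$.
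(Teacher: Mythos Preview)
Your argument is correct: the product expansion locates the simple zeros of $j(\beta z^b;q^m)$ at $\beta z^b=q^{km}$, the quasi-periodicity relation \eqref{equation:j-elliptic} (with base $q^m$) reduces the local expansion to the single computation $j'(1;q^m)=-J_m^3$, and the chain rule via $u-1\sim b(z-z_0)/z_0$ delivers the stated residue.

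There is nothing to compare against, however, because the paper does not prove this proposition; it is quoted from Hickerson \cite[Theorem~1.3]{H1} and used as a black box in the residue calculations of Sections~\ref{section:analytic}, \ref{section:same}, and \ref{section:different}. Your self-contained derivation is the standard one and is essentially how Hickerson proves it in \cite{H1}.
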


The Appell-Lerch function $m(x,q,z)$ satisfies several well-known functional equations and identities, which we collect in the form of a proposition, 

\begin{proposition}  For generic $x,z\in \mathbb{C}^*$
{\allowdisplaybreaks \begin{subequations}
\begin{gather}
m(x,q,z)=m(x,q,qz),\label{equation:mxqz-fnq-z}\\
m(x,q,z)=x^{-1}m(x^{-1},q,z^{-1}),\label{equation:mxqz-flip}\\
m(qx,q,z)=1-xm(x,q,z),\label{equation:mxqz-fnq-x}\\
m(x,q,z)=m(x,q,x^{-1}z^{-1})\label{equation:mxqz-fnq-newz}.
\end{gather}
\end{subequations}}%
\end{proposition}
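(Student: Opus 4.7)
The plan is to verify each of the four identities directly from the series definition \eqref{equation:mdef-eq}. Three of them follow by elementary index manipulation in the summation, while the fourth will require an auxiliary difference formula and is the principal obstacle.

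For \eqref{equation:mxqz-fnq-z}, I would expand $m(x,q,qz)$, absorb the $(qz)^r$ into the theta exponent via $\binom{r}{2}+r=\binom{r+1}{2}$, and reindex by $s=r+1$ so that the denominator reads $1-q^{s-1}xz$. The elliptic transformation $j(qz;q)=-z^{-1}j(z;q)$ (the $n=1$ case of \eqref{equation:j-elliptic}) then absorbs the residual prefactor, collapsing the expression to $m(x,q,z)$. For \eqref{equation:mxqz-flip}, the analogous approach works: in $m(x^{-1},q,z^{-1})$, first rewrite $1/(1-q^{r-1}x^{-1}z^{-1})=-q^{1-r}xz/(1-q^{1-r}xz)$, then substitute $s=1-r$ (using $\binom{1-s}{2}=\binom{s}{2}$) followed by $t=s+1$, and absorb the residual prefactor via $j(z^{-1};q)=-z^{-1}j(z;q)$ from \eqref{equation:j-inv}; the sum collapses to $x\cdot m(x,q,z)$.

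For \eqref{equation:mxqz-fnq-x}, I would establish the equivalent identity $m(qx,q,z)+xm(x,q,z)=1$. Reindexing the series for $m(x,q,z)$ by $r\mapsto r+1$ brings its denominator into the form $1-q^k xz$, matching that of $m(qx,q,z)$; the two series then combine into a single sum whose numerator $(-1)^k q^{\binom{k}{2}}z^k(1-q^k xz)$ cancels against the common denominator. The residual bare theta sum equals $j(z;q)$ by the Jacobi triple product, and cancels the $1/j(z;q)$ prefactor, leaving the constant $1$.

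The principal obstacle is \eqref{equation:mxqz-fnq-newz}, which does not yield to a pure index shift. The cleanest route is to invoke a general difference formula for $m$ derived in \cite{HM}: $m(x,q,z_1)-m(x,q,z_0)=z_0 J_1^3\, j(z_1/z_0;q)\, j(xz_0z_1;q) / [j(z_0;q)\,j(z_1;q)\,j(xz_0;q)\,j(xz_1;q)]$. Setting $z_0=z$ and $z_1=x^{-1}z^{-1}$ forces the factor $j(xz_0z_1;q)=j(1;q)=0$ in the numerator, killing the difference. Alternatively, one can verify that both sides are meromorphic in $z$ on $\C^*/q^{\Z}$ with matching poles and residues (the latter computed via Proposition~\ref{proposition:H1Thm1.3}), which forces equality.
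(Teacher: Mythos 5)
The paper does not actually prove this proposition: it is stated as a collection of well-known facts with citations to the literature (these are, e.g., special cases of results in \cite{HM}), so your self-contained verification is a genuinely different --- and more complete --- route. Your arguments for \eqref{equation:mxqz-fnq-z}, \eqref{equation:mxqz-flip}, and \eqref{equation:mxqz-fnq-x} are correct as outlined: the index shifts are consistent (in particular $\binom{r}{2}+r=\binom{r+1}{2}$ and $\binom{1-s}{2}=\binom{s}{2}$), the prefactors are absorbed exactly by the $n=1$ case of \eqref{equation:j-elliptic} and by \eqref{equation:j-inv}, and for \eqref{equation:mxqz-fnq-x} the telescoped numerator leaves the bare theta sum $j(z;q)$ cancelling the prefactor; absolute convergence for $|q|<1$ (away from the poles) justifies the rearrangements. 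For \eqref{equation:mxqz-fnq-newz}, appealing to Proposition \ref{proposition:changing-z} with $z_0=z$, $z_1=x^{-1}z^{-1}$ is legitimate and not circular, since that difference formula is proved in \cite{HM} independently of the identity you are deriving; the point is exactly that $j(xz_0z_1;q)=j(1;q)=0$ while the denominator factors are nonzero for generic $x,z$. The only soft spot is your alternative sketch for \eqref{equation:mxqz-fnq-newz}: matching poles and residues alone does not force two meromorphic functions to agree --- you would additionally need that the difference is invariant under $z\mapsto qz$ (which follows from \eqref{equation:mxqz-fnq-z}), hence a holomorphic elliptic function and therefore constant, and then a normalization point such as $z^2=x^{-1}$ to see that the constant is zero. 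Since that route is offered only as a backup, the proof as a whole stands.
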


Rewriting (\ref{equation:mxqz-fnq-x}), we have
\begin{equation}
m(x,q,z)=1-q^{-1}xm(q^{-1}x,q,z)\label{equation:mxqz-altdef0intro}.
\end{equation}
In \cite[Section $3$]{HM} we introduced a heuristic which guided our further study of the Appell-Lerch function $m(x,q,z)$ and Hecke-type double-sums.  If we iterate (\ref{equation:mxqz-altdef0intro}), we obtain
\begin{equation}
m(x,q,z)\sim \sum_{r\ge 0}(-1)^rq^{-\binom{r+1}{2}}x^r.\label{equation:mxqz-heuristic}
\end{equation}
Of course, we cannot use an equal sign here, since the infinite series on the right diverges for $|q|<1$.  However, it is often useful to think of $m(x,q,z)$ as a partial theta series with $q$ replaced by $q^{-1}$.  Roughly speaking, we may think of ``$\sim$'' as congruence `mod theta'.  For example, since the series (\ref{equation:mxqz-heuristic}) does not depend on $z$, we may write
\begin{equation}
m(x,q,z_0)\sim m(x,q,z_1),
\end{equation}
where $z_0$ and $z_1$ are generic placeholders.  In fact, the difference between these two quantities is a theta function, as we see in the following well-known result,

\begin{proposition} \label{proposition:changing-z}For generic $x,z_0,z_1\in \mathbb{C}^*$
\begin{equation}
m(x,q,z_1)-m(x,q,z_0)=\frac{z_0J_1^3j(z_1/z_0;q)j(xz_0z_1;q)}{j(z_0;q)j(z_1;q)j(xz_0;q)j(xz_1;q)}.
\end{equation}
\end{proposition}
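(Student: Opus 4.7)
The plan is to fix $x$, $q$, and $z_0$, view both sides as meromorphic functions of $z_1$, and show that they are $q$-elliptic on $\mathbb{C}^*$ with matching principal parts at every pole and agreeing at one convenient value. By the Liouville principle for elliptic functions the equality then follows.

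First I would verify quasi-periodicity in $z_1$. The left-hand side is invariant under $z_1 \mapsto qz_1$ by (\ref{equation:mxqz-fnq-z}). For the right-hand side, applying (\ref{equation:j-elliptic}) with $n=1$ to each of the four theta factors $j(z_1/z_0;q)$, $j(xz_0z_1;q)$, $j(z_1;q)$, and $j(xz_1;q)$ produces four prefactors of the form $-(\cdot)^{-1}$ whose overall product telescopes to $1$.

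Next I would locate the poles within a fundamental annulus: both sides have at most simple poles at $z_1 = 1$ (from $j(z_1;q)$) and at $z_1 = x^{-1}$ (from $j(xz_1;q)$ on the right, and from the $r = 1$ term $-z_1/(1 - xz_1)$ of the defining sum (\ref{equation:mdef-eq}) on the left; for generic $q$ no other $r$ produces a pole there). For any $q$-elliptic function $f$ the weighted sum $\sum_{p} p^{-1}\,\textnormal{Res}_p(f)$ over poles in the annulus vanishes, so it suffices to match residues at a single pole. I would check matching at $z_1 = x^{-1}$. On the left, the $r=1$ term contributes residue $1/x^2$ to the numerator sum; dividing by $j(x^{-1};q)$ and simplifying via (\ref{equation:j-inv}) gives $-1/(xj(x;q))$. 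On the right, Proposition \ref{proposition:H1Thm1.3} applied with $\beta = x$, $b = m = 1$, $k = 0$ gives $\textnormal{Res}_{z_1 = x^{-1}} 1/j(xz_1;q) = -1/(xJ_1^3)$; the remaining factors collapse using $j(1/(xz_0);q) = -j(xz_0;q)/(xz_0)$ from (\ref{equation:j-inv}) to the same value $-1/(xj(x;q))$.

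With principal parts matched at all poles, the difference LHS minus RHS is an analytic $q$-elliptic function, hence constant. To identify this constant I would set $z_1 = z_0$: the left-hand side vanishes trivially, and the right-hand side vanishes through the factor $j(z_1/z_0;q) = j(1;q) = 0$, so the constant is zero and the identity follows. The main obstacle is the bookkeeping in the quasi-periodicity and residue-matching steps; pleasingly, everything reduces to (\ref{equation:j-elliptic}), (\ref{equation:j-inv}), and one application of Proposition \ref{proposition:H1Thm1.3}, with no appeal to Kronecker's identity or any deeper summation.
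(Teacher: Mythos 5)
Your argument is correct, but note that the paper does not actually prove Proposition \ref{proposition:changing-z}: it is stated as a ``well-known result'' (it is Theorem 3.3 of \cite{HM}) and used without proof. Your elliptic-function argument is the standard one and, pleasingly, is exactly the template the paper itself uses to prove Theorem \ref{theorem:result} in Sections \ref{section:functional}--\ref{section:proof} (quasi-periodicity, matching of residues, Liouville). All the key computations check out: the four factors of $-(\cdot)^{-1}$ from (\ref{equation:j-elliptic}) do cancel between numerator and denominator to give invariance under $z_1\mapsto qz_1$; only the $r=1$ term of (\ref{equation:mdef-eq}) is singular at $z_1=x^{-1}$ for generic $x$, and both residues there reduce to $-1/(xj(x;q))$ via (\ref{equation:j-inv}); and the weighted residue-sum identity $\sum_p p^{-1}\operatorname{Res}_p(f)=0$ for a $q$-elliptic $f$ legitimately lets you skip the messier pole at $z_1\equiv 1$. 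If you write this up, you should make explicit the genericity hypotheses you are silently using (namely $x^{-1}\notin q^{\mathbb{Z}}$ so the two pole classes are distinct and the $r=1$ pole is the only one, and $z_0\notin q^{\mathbb{Z}}\cup x^{-1}q^{\mathbb{Z}}$ so the evaluation point $z_1=z_0$ is a regular point), and say a word about why a pole-free $q$-elliptic function on $\mathbb{C}^*$ is constant (bounded on a fundamental annulus, hence bounded on $\mathbb{C}^*$, hence constant).
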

\noindent A specialisation of Propositon \ref{proposition:changing-z} that we will use later reads
{\allowdisplaybreaks \begin{subequations}
\begin{gather}
m(-\frac{z}{qy},q^2,qy) -m(-\frac{z}{qy},q^2,-y)
=\frac{yq}{z}\frac{J_2^3j(-q;q^2)j(yz;q^2)}{j(qy;q^2)j(-y;q^2)j(-z;q^2)j(qz;q^2)}.\label{equation:change-n-even}
\end{gather}
\end{subequations}}%

With $R$ and $S$ denoting integers, we recall the useful \cite[$(1.15)$]{H2}
\begin{equation}
\sum_{\sg(r)=\sg(s)}\sg(r)c_{r,s}=\sum_{\sg(r)=\sg(s)}\sg(r)c_{r+R,s+S}+\sum_{r=0}^{R-1}\sum_{s}c_{r,s}+\sum_{s=0}^{S-1}\sum_{s}c_{r,s},
\label{equation:H2id1.15}
\end{equation}
and we point out that we will be observing the convention \cite{H2}:  for $b<a$, 
\begin{equation}
\sum_{r=a}^{b}c_r:=-\sum_{r=b+1}^{a-1}c_r.
\end{equation}
To prevent later confusion for the unfamiliar reader, we also point out examples such as
\begin{equation}
\sum_{r=1}^{-1}c_r=-\sum_{r=0}^{0}c_r  \textup{ and } \sum_{r=1}^{0}c_r=-\sum_{r=1}^{0}c_r=0.
\end{equation}

Our convention allows us to combine two seemingly different cases into one case.  For example, induction arguments generalizing (\ref{equation:mxqz-fnq-x}) and (\ref{equation:mxqz-altdef0intro}) yield two different results.  However, our summation convention allows us to combine the two results into one:
\begin{equation}
m(q^nx,q,z)=\sum_{k=0}^{n-1}(-1)^kq^{k(n-1)-\binom{k}{2}}x^k+(-1)^nq^{\binom{n}{2}}x^nm(x,q,z), \ \textup{for $n\in\mathbb{Z}$}.\label{equation:mxqz-induction}
\end{equation}

\section{Two Functional equations}\label{section:functional}
We define
\begin{equation}
F(x,y,z;q):=\Big ( \sum_{s,t \ge 0}-\sum_{s,t<0} \Big)\frac{q^{st}y^sz^t}{1-xq^{s+t}} \label{equation:big-F-def}
\end{equation}
and
{\allowdisplaybreaks \begin{align}
G(x,y,z;q):&=\frac{J_1^3j(yz;q)}{j(y;q)j(z;q)} m \big (-\frac{qx}{yz},q^{2},qyz \big )
+ \frac{J_1^3j(xy;q)}{j(x;q)j(y;q)} m \big (-\frac{qz}{xy},q^{2},qxy\big ) \notag \\
&\ \ \ \ \ + \frac{J_1^3j(xz;q)}{j(x;q)j(z;q)} m \big (-\frac{qy}{xz},q^{2}, qxy\big )\notag \\
&\ \ \ \ \ \ \ \ \ \ -2 \frac{J_1^3J_2^3}{j(x;q)j(y;q)j(z;q)} \frac{j(xy;q^2)j(xz;q^2)j(yz;q^2)}{j(-x;q^2)j(-y;q^2)j(-z;q^2)}.\label{equation:big-G-def}
\end{align}}%
\begin{proposition}  \label{proposition:prop-functional} For  $x,y,z\in \mathbb{C}^*$ where $|q|<|y|<1$, $|q|<|z|<1$, and $x$ is generic, the functions $F(x,y,z;q)$ and $G(x,y,z;q)$ satisfy the functional equation
{\allowdisplaybreaks \begin{align}
M(q^2x,y,z;q)&=\frac{xq}{yz}M(x,y,z;q) +\frac{J_1^3j(yz;q)}{j(y;q)j(z;q)} \\
&\ \ \ \ \ -\frac{xq}{yz} \frac{J_1^3j(xy;q)}{j(x;q)j(y;q)} -\frac{xq}{yz} \frac{J_1^3j(xz;q)}{j(x;q)j(z;q)}.\notag
\end{align}}%
\end{proposition}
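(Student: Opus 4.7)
The plan is to verify the stated functional equation directly, separately for $F$ and for $G$. For $F$, I write
\[
F(q^2x,y,z;q) - \tfrac{xq}{yz}F(x,y,z;q) = \Big(\sum_{s,t\ge 0} - \sum_{s,t<0}\Big)\bigl(c(s,t) - d(s,t)\bigr),
\]
where $c(s,t):=q^{st}y^sz^t/(1-xq^{s+t+2})$ and $d(s,t):=xq\cdot q^{st}y^{s-1}z^{t-1}/(1-xq^{s+t})$. Applying (\ref{equation:H2id1.15}) with $R=S=1$ to the second sum yields
\[
\sum_{\sg(s)=\sg(t)}\sg(s)\,d(s,t) = \sum_{\sg(s)=\sg(t)}\sg(s)\,d(s+1,t+1) + \sum_{t\in\Z}d(0,t) + \sum_{s\in\Z}d(s,0).
\]
Setting $\tilde d(s,t):=d(s+1,t+1)=xq^2\cdot q^{st}(qy)^s(qz)^t/(1-xq^{s+t+2})$, the denominators of $c$ and $\tilde d$ now agree, and miraculously
\[
c(s,t) - \tilde d(s,t) = \frac{q^{st}y^sz^t(1-xq^{s+t+2})}{1-xq^{s+t+2}} = q^{st}y^sz^t,
\]
so $\bigl(\sum_{s,t\ge 0}-\sum_{s,t<0}\bigr)(c-\tilde d)$ collapses to $J_1^3 j(yz;q)/(j(y;q)j(z;q))$ by Kronecker's (\ref{equation:kronecker}). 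A direct rewriting of $d(0,t)$ and $d(s,0)$ identifies the two bilateral ``correction'' sums as $\sum_{t\in\Z}d(0,t) = \tfrac{xq}{yz}\sum_{t}z^t/(1-xq^t)$ and $\sum_{s\in\Z}d(s,0) = \tfrac{xq}{yz}\sum_{s}y^s/(1-xq^s)$, each of which is a classical Kronecker sum evaluable via (\ref{equation:kronecker-original}); the hypotheses $|q|<|y|<1$ and $|q|<|z|<1$ furnish precisely the convergence needed. Assembling the three pieces gives exactly the right-hand side of the claimed functional equation.

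For $G$, I substitute $x\mapsto q^2x$ in each of the four summands of $G$ and simplify using the Appell--Lerch identities at base $q^2$. The elliptic relation (\ref{equation:j-elliptic}) extracts the $q^2$-factors from each $j$-argument as an explicit scalar. For the first Appell--Lerch term, the translation $m(q^2\tilde x,q^2,z) = 1 - \tilde x\,m(\tilde x,q^2,z)$ from (\ref{equation:mxqz-fnq-x}) (applied with $q\to q^2$) immediately produces the constant $J_1^3 j(yz;q)/(j(y;q)j(z;q))$ plus $\tfrac{xq}{yz}$ times the original first term. For the second and third Appell--Lerch terms, I first apply (\ref{equation:mxqz-fnq-z}) with $q\to q^2$ to absorb a stray $q^2$ from the third argument of $m$, and then use the equivalent form (\ref{equation:mxqz-altdef0intro}) to rewrite $m(q^{-2}\tilde x,q^2,z)$ in terms of $m(\tilde x,q^2,z)$; each such term then contributes $-\tfrac{xq}{yz}$ times the corresponding Kronecker theta quotient plus $\tfrac{xq}{yz}$ times the original Appell--Lerch term. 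A short direct theta computation using only (\ref{equation:j-elliptic}) shows that the fourth (pure theta-quotient) summand is exactly multiplied by $\tfrac{xq}{yz}$. Summing the four transformed pieces yields precisely the right-hand side.

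The main obstacle is careful bookkeeping: in the $F$ computation, correctly identifying the two ``correction'' sums produced by (\ref{equation:H2id1.15}) as standard bilateral Kronecker sums of the required form after an index shift by one; in the $G$ computation, accurately tracking all signs and $q^{\binom{n}{2}}$-factors generated by each application of (\ref{equation:j-elliptic}) and of the $m$-identities at base $q^2$. Both are routine once the pattern above is in hand.
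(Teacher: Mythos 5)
Your proposal is correct and follows essentially the same route as the paper: the $F$-part is the paper's telescoping computation (shift by $R=S=1$ via (\ref{equation:H2id1.15}), then evaluate the diagonal sum by (\ref{equation:kronecker}) and the two boundary sums by (\ref{equation:kronecker-original})) read in the opposite direction, and the $G$-part uses exactly the same identities (\ref{equation:mxqz-fnq-z}), (\ref{equation:mxqz-fnq-x}), and (\ref{equation:j-elliptic}). The only difference is organizational, not mathematical.
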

\begin{proof} 
Using Kronecker's identity (\ref{equation:kronecker}), we obtain
{\allowdisplaybreaks \begin{align*}
\frac{J_1^3j(yz;q)}{j(y;q)j(z;q)}&=\Big ( \sum_{s,t \ge 0}-\sum_{s,t<0} \Big)\frac{q^{st}y^sz^t}{1-xq^{s+t+2}}(1-xq^{s+t+2})\\
&=\Big ( \sum_{s,t \ge 0}-\sum_{s,t<0} \Big)\frac{q^{st}y^sz^t}{1-q^2xq^{s+t}}-x\Big ( \sum_{s,t \ge 0}-\sum_{s,t<0} \Big)\frac{q^{st+s+t+2}y^sz^t}{1-xq^{s+t+2}}\\
&=F(q^2x,y,z;q)-\frac{xq}{yz}\Big ( \sum_{s,t \ge 0}-\sum_{s,t<0} \Big)\frac{q^{(1+s)(1+t)}y^{s+1}z^{1+t}}{1-xq^{s+1+t+1}}\\
&=F(q^2x,y,z;q)\\
&\ \ \ \ \ -\frac{xq}{yz}\Big [ \Big ( \sum_{s,t \ge 0} -\sum_{s,t<0} \Big)\frac{q^{st}y^{s}z^{t}}{1-xq^{s+t}}-\sum_{t\in\mathbb{Z}}\frac{z^{t}}{1-xq^{t}}-\sum_{s\in\mathbb{Z}}\frac{y^{s}}{1-xq^{s}}\Big ]\\
&=F(q^2x,y,z;q)
-\frac{xq}{yz} F(x,y,z;q)+\frac{xq}{yz}\frac{J_1^3j(xz;q)}{j(x;q)j(z;q)}+\frac{xq}{yz}\frac{J_1^3j(xy;q)}{j(x;q)j(y;q)},
\end{align*}}%
where in the last two equalities we have used (\ref{equation:H2id1.15}) and (\ref{equation:kronecker-original}).  For $G(x,y,z;q)$, 
{\allowdisplaybreaks \begin{align*}
G(&q^2x,y,z;q)\\
&=\frac{J_1^3j(yz;q)}{j(y;q)j(z;q)} m \big (-\frac{q^3x}{yz},q^{2},qyz \big )
+ \frac{J_1^3j(q^2xy;q)}{j(q^2x;q)j(y;q)} m \big (-\frac{qz}{q^2xy},q^{2},q^3xy\big )\\
&\ \ \ \ \ + \frac{J_1^3j(q^2xz;q)}{j(q^2x;q)j(z;q)} m \big (-\frac{qy}{q^2xz},q^{2}, q^3xy\big )\\
&\ \ \ \ \ \ \ \ \ \ -2 \frac{J_1^3J_2^3}{j(q^2x;q)j(y;q)j(z;q)} \frac{j(q^2xy;q^2)j(q^2xz;q^2)j(yz;q^2)}{j(-q^2x;q^2)j(-y;q^2)j(-z;q^2)}\\
&=\frac{xq}{yz}G(x,y,z;q) +\frac{J_1^3j(yz;q)}{j(y;q)j(z;q)} -\frac{xq}{yz} \frac{J_1^3j(xy;q)}{j(x;q)j(y;q)} -\frac{xq}{yz} \frac{J_1^3j(xz;q)}{j(x;q)j(z;q)},
\end{align*}}%
where we have used (\ref{equation:mxqz-fnq-z}), (\ref{equation:mxqz-fnq-x}), and (\ref{equation:j-elliptic}).
\end{proof}

\section{An Analytic Function}\label{section:analytic}
The goal of this section is to establish the following proposition.
\begin{proposition}  \label{proposition:H-analytic} We fix  $y,z\in \mathbb{C}^*$ where $|q|<|y|<1$ and $|q|<|z|<1$.  The function
\begin{align*}
H(x,y,z;q):=F(x,y,z;q)-G(x,y,z;q),
\end{align*}
where $F(x,y,z;q)$ (resp. $G(x,y,z;q)$) is as defined in (\ref{equation:big-F-def}) (resp. (\ref{equation:big-G-def})), is analytic for $x\ne 0$.
\end{proposition}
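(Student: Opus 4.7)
My plan is to fix $y,z$ as in the hypothesis and then analyze, as meromorphic functions of $x$, the potential singularities of both $F(x,y,z;q)$ and $G(x,y,z;q)$, showing at each candidate pole that the residues agree so that $H = F - G$ has no singularity for $x\ne 0$.

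First I would catalogue the poles. From its defining series, $F$ has only simple poles coming from the zeros $1 - xq^{s+t}=0$; since $s+t$ ranges over $\{0,1,2,\dots\}$ in the first sum and $\{-2,-3,\dots\}$ in the second, $F$ has simple poles exactly at $x=q^n$ for $n\in\Z\setminus\{1\}$, with residues read off directly from the series. The function $G$ has three sources of candidate singularity: (a) the factor $1/j(x;q)$ in each of its four terms, producing poles at $x=q^n$ for every $n\in\Z$ (including $n=1$); (b) the factor $1/j(-x;q^2)$ in the theta quotient, producing poles at $x=-q^{2n}$; and (c) the Appell-Lerch functions themselves, whose $x$-poles are visible from the defining series. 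For the first $m$-term, the definition gives
\[
m\!\left(-\frac{qx}{yz},\,q^2,\,qyz\right)=\frac{1}{j(qyz;q^2)}\sum_{r\in\Z}\frac{(-1)^rq^{r^2}(yz)^r}{1+q^{2r}x},
\]
whose poles in $x$ lie at $x=-q^{-2r}$, the same lattice as in (b); the other two $m$-functions contribute further apparent poles at points related to $y$ or $z$ by powers of $q$.

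The argument then has two main parts. Part one is to show that the poles of $G$ at places where $F$ is holomorphic, namely $x=q$ from (a), the points $x=-q^{2n}$ from (b) and the first $m$, and the apparent poles from (c), all have vanishing residue in $G$. At $x=-q^{2n}$ the cancellation is between the theta quotient and the first Appell-Lerch function: the residue of $1/j(-x;q^2)$ is given by Proposition \ref{proposition:H1Thm1.3} while the residue of $m$ is read off from the series above, and the two must match. At $x=q$ the cancellation involves all four terms of $G$ and requires combining contributions via theta identities such as \eqref{equation:j-elliptic}, \eqref{equation:j-mod-dec}, and Proposition \ref{proposition:H1Thm1.3}. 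The type-(c) apparent poles cancel by analogous residue matching. Part two is to verify that at the common poles $x=q^n$ with $n\le 0$ or $n\ge 2$ the residues of $F$ and $G$ agree; here the series for $m$ again provides explicit terms that line up with the finite sums arising from $F$ after reindexing.

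The main obstacle is the bookkeeping of residues at the many candidate singularities and the delicate theta-function manipulations required to see them cancel. A potentially substantial shortcut is Proposition \ref{proposition:prop-functional}, which yields $H(q^2x,y,z;q)=(xq/yz)H(x,y,z;q)$; this functional equation implies that $H$ is analytic on all of $\C^*$ as soon as it is analytic on a single fundamental annulus, say $\{|q|^2\le|x|<1\}$, which contains only a small finite set of candidate singularities, dramatically reducing the casework.
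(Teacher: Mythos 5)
Your plan matches the paper's proof in all essentials: the paper likewise catalogues the candidate simple poles at $x=q^{n}$ and $x=-q^{2n}$, splits $G$ into the three Appell--Lerch pieces plus the theta quotient, and verifies via Proposition \ref{proposition:H1Thm1.3} and the $m$-function identities that the residues sum to zero at every such point (your only slips are cosmetic: the first $m$-term has no pole at $x=q$, so that cancellation involves only three of the four terms, and the apparent poles of the second and third $m$-terms where $qxy$ or $qxz$ lies in $q^{2\Z}$ are removed identically by the zeros of $j(xy;q)$ and $j(xz;q)$ through \eqref{equation:j-mod-2}, not by cross-term residue matching). Your closing suggestion of using $H(q^2x,y,z;q)=\frac{xq}{yz}H(x,y,z;q)$ to confine the residue check to a single fundamental annulus is a legitimate economy, but the paper does not use it here, reserving the functional equation for the Laurent-series argument of Section \ref{section:proof}.
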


Let us decompose our function $G(x,y,z;q)$ as
\begin{equation}
G(x,y,z;q)=G_1(x,y,z;q)+G_2(x,y,z;q),
\end{equation}
where
{\allowdisplaybreaks \begin{align}
G_1&(x,y,z;q):= \frac{j(yz;q^2)}{j(y;q)j(z;q)} \frac{J_1^4}{J_2^2}\sum_{k\in \mathbb{Z}}\frac{(-1)^kq^{k^2}(yz)^k}{1+q^{2k}x} \label{equation:G1-def}\\
&+ \frac{j(xy;q^2)}{j(x;q)j(y;q)}  \frac{J_1^4}{J_2^2}\sum_{k\in \mathbb{Z}}\frac{(-1)^kq^{k^2}(xy)^k}{1+q^{2k}z}  + \frac{j(xz;q^2)}{j(x;q)j(z;q)} \frac{J_1^4}{J_2^2}\sum_{k\in \mathbb{Z}}\frac{(-1)^kq^{k^2}(xz)^k}{1+q^{2k}y},\notag
\end{align}}%
and
\begin{align}
G_2(x,y,z;q):= -2 \frac{J_1^3J_2^3}{j(x;q)j(y;q)j(z;q)} \frac{j(xy;q^2)j(xz;q^2)j(yz;q^2)}{j(-x;q^2)j(-y;q^2)j(-z;q^2)}.\label{equation:G2-def}
\end{align}

With our notation in place, we now proceed with a series of lemmas and conclude the section with the proof of Proposition \ref{proposition:H-analytic}.

\begin{lemma} \label{lemma:F1-n} For fixed $y,z\in\mathbb{C}^*$ where $|q|<|y|<1$ and $|q|<|z|<1$, the function $F(x,y,z;q)$ has simple poles at $x_0=q^{n}$, where $n\in\mathbb{Z}$, with respective residues
\begin{equation}
\sum_{s=1}^{n-1}\frac{q^{s(n-s)+n}}{y^{s}z^{n-s}}.
\end{equation}
\end{lemma}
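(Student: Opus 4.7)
The plan is to compute the residues termwise from the series definition of $F(x,y,z;q)$ and then reconcile the naive result with the claimed formula using the summation convention of Section~\ref{section:notation}. Each summand $\frac{q^{st}y^sz^t}{1-xq^{s+t}}$ is meromorphic in $x$ with a unique simple pole at $x=q^{-(s+t)}$, whose local residue is $-q^{st-(s+t)}y^sz^t$. Therefore a candidate pole $x_0=q^n$ receives contributions precisely from pairs $(s,t)$ with $s+t=-n$, each contributing $\pm q^{st+n}y^sz^t$ with sign inherited from whichever of the two outer sums it lives in. Since $|q|<|y|<1$ and $|q|<|z|<1$, both $\sum_{s,t\ge 0}$ and $\sum_{s,t<0}$ converge uniformly on compact subsets of $\C^*$ that avoid this discrete pole locus, so termwise residue extraction is justified.

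I would then split into three cases. For $n\ge 2$, only $\sum_{s,t<0}$ contains pairs with $s+t=-n$, parametrized by $s=-a$, $t=-(n-a)$ for $a=1,\ldots,n-1$; the outer minus sign from $-\sum_{s,t<0}$ cancels the local residue sign, yielding exactly $\sum_{a=1}^{n-1}q^{a(n-a)+n}/(y^{a}z^{n-a})$. For $n=1$, no $(s,t)$ with $s,t\ge 0$ or $s,t<0$ satisfies $s+t=-1$, so the putative pole is absent and the residue is $0$, consistent with the empty sum $\sum_{s=1}^{0}$. For $n\le 0$, only $\sum_{s,t\ge 0}$ contributes, through pairs $s=a$, $t=-n-a$ with $a=0,1,\ldots,-n$, and the residue evaluates to $-\sum_{a=0}^{-n}q^{a(-n-a)+n}y^{a}z^{-n-a}$.

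The main obstacle, and the only step requiring care, is reconciling this last expression with the stated $\sum_{s=1}^{n-1}q^{s(n-s)+n}/(y^s z^{n-s})$ when $n\le 0$. Here the paper's convention gives $\sum_{s=1}^{n-1}=-\sum_{s=n}^{0}$, and under the substitution $s\mapsto n-s$ (which is a bijection between $\{n,\ldots,0\}$ and $\{0,\ldots,-n\}$) the summand $q^{s(n-s)+n}/(y^s z^{n-s})$ becomes $q^{(n-a)a+n}y^{a-n}z^{-a}$; combined with the relation $n-a=-(-n-a)-2a+n$ adjusted via the symmetry $(s,t)\leftrightarrow(t,s)$ of the underlying pair set, one checks that this is precisely $q^{a(-n-a)+n}y^{a}z^{-n-a}$ after re-indexing. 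Once this convention unpacking is done, the match is exact, and the proposition follows. Everything else is routine termwise computation.
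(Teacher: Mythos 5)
Your proposal is correct and follows essentially the same route as the paper: extract the residue at $x_0=q^{n}$ termwise from the finitely many summands with $s+t=-n$ (the paper simply assumes $n\ge 1$ without loss of generality and reads off the same sum over $s+t=n$, $s,t\ge 1$). One small slip in your $n\le 0$ reconciliation: $s\mapsto n-s$ maps $\{n,\dots,0\}$ to itself, not to $\{0,\dots,-n\}$, and the stated relation $n-a=-(-n-a)-2a+n$ is false; the substitution you want is simply $s=-a$, under which $\sum_{s=1}^{n-1}q^{s(n-s)+n}y^{-s}z^{-(n-s)}=-\sum_{s=n}^{0}q^{s(n-s)+n}y^{-s}z^{-(n-s)}$ becomes $-\sum_{a=0}^{-n}q^{a(-n-a)+n}y^{a}z^{-n-a}$ directly, matching your computed residue.
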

\begin{proof} Without loss of generality, we assume that $n\ge 1$.  We then begin with
{\allowdisplaybreaks \begin{align*}
\lim_{x\rightarrow q^{n}}(x-q^{n})\Big ( -\sum_{\substack{s+t=n\\ s,t\ge 1}}\frac{q^{s(n-s)}y^{-s}z^{-(n-s)}}{1-xq^{-n}}\Big )
&=\lim_{x\rightarrow q^{n}}(x-q^{n})\sum_{\substack{s+t=n\\ s,t\ge 1}}\frac{q^{s(n-s)+n}y^{-s}z^{-(n-s)}}{x-q^{n}}\\
&=\sum_{\substack{s+t=n\\ s,t\ge 1}}\frac{q^{s(n-s)+n}}{y^{s}z^{n-s}},
\end{align*}}%
and the result follows.
\end{proof}

\begin{lemma} \label{lemma:G1-n} For fixed $y,z\in\mathbb{C}^*$ where $|q|<|y|<1$ and $|q|<|z|<1$, the function $G_1(x,y,z;q)$ has simple poles at $x_0=q^{n}$, where $n\in\mathbb{Z}$, with respective residues
\begin{equation}
\sum_{s=1}^{n-1}\frac{q^{s(n-s)+n}}{y^{s}z^{n-s}}-\frac{q^{m^2+2m}}{y^mz^m}\cdot \frac{J_2^3j(-q;q^2)j(yz;q^2)}{j(qy;q^2)j(-y;q^2)j(-z;q^2)j(qz;q^2)}
\end{equation}
for $n=2m$, and
\begin{equation}
\sum_{s=1}^{n-1}\frac{q^{s(n-s)+n}}{y^{s}z^{n-s}} 
 +\frac{q^{m^2+3m+1}}{y^{m}z^{m}}\cdot \frac{J_2^3j(-1;q^2)j(yz;q^2)}{j(y;q^2)j(-y;q^2)j(z;q^2)j(-z;q^2)}
\end{equation}
for $n=2m+1$.
\end{lemma}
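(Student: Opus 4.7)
The plan is to compute the residue of $G_1$ at $x=q^n$ directly and verify that it matches the claimed expression. The first summand of $G_1$ in (\ref{equation:G1-def}) has denominator factors $1+q^{2k}x$, placing its poles on the negative real axis; only the second and third summands contribute at $x=q^n$. Proposition \ref{proposition:H1Thm1.3} applied with $\beta=1$, $b=1$, $m=1$ gives
$$\textup{Res}_{x=q^n}\frac{1}{j(x;q)}=\frac{(-1)^{n+1}q^{\binom{n+1}{2}}}{J_1^3},$$
and the elliptic transformation (\ref{equation:j-elliptic}) with base $q^2$ gives $j(q^{2m}y;q^2)=(-1)^mq^{-m(m-1)}y^{-m}j(y;q^2)$ and $j(q^{2m+1}y;q^2)=(-1)^mq^{-m^2}y^{-m}j(qy;q^2)$, with analogues in $z$. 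Using (\ref{equation:j-mod-2}) in the form $J_1j(y;q^2)/[J_2^2j(y;q)]=1/j(qy;q^2)$ simplifies the prefactor in each parity case.

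By inspection of (\ref{equation:mdef-eq}), the sums in the residues are identified as Appell--Lerch functions:
$$\sum_{k\in\Z}\frac{(-1)^kq^{k^2}y^k}{1+q^{2k}w}=j(qy;q^2)\,m(-qw/y,\,q^2,\,qy),$$
and similarly $\sum_k(-1)^kq^{k^2+k}y^k/(1+q^{2k}w)=-y^{-1}j(y;q^2)\,m(-w/y,q^2,y)$ after applying (\ref{equation:mxqz-fnq-z}). After the shift $k\mapsto k-m$ the residue sum reduces to $m(q^{-2m}(-qz/y),q^2,qy)$ for $n=2m$, and to $m(q^{-2m}(-z/y),q^2,y)$ for $n=2m+1$. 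I would then apply (\ref{equation:mxqz-induction}) with $q\mapsto q^2$ and $n\mapsto-m$ to extract a finite sum plus a residual $m$-function at the unshifted first argument. Reindexing the finite sum via $s\mapsto n-s$ turns it into $\sum_{s=\lceil n/2\rceil}^{n-1}q^{s(n-s)+n}/(y^sz^{n-s})$ (the upper half of the desired sum); the $y\leftrightarrow z$ symmetric contribution from the third summand supplies the lower half $\sum_{s=1}^{\lfloor n/2\rfloor}$. The two pieces concatenate into $\sum_{s=1}^{n-1}q^{s(n-s)+n}/(y^sz^{n-s})$; for $n=2m$ the middle index $s=m$ is double-counted, producing an additional boundary term $+q^{m^2+2m}/(y^mz^m)$.

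It remains to match the residual $m$-function combination with the claimed theta quotient. For $n=2m$, the remainder (after absorbing the boundary term) is
$$-\frac{q^{m^2+2m}}{y^mz^m}\bigl[m(-qz/y,q^2,qy)+m(-qy/z,q^2,qz)-1\bigr];$$
I would invoke (\ref{equation:mxqz-fnq-x}) to write each $m$-term as $1+(z/(qy))\,m(-z/(qy),q^2,qy)$ (resp.\ its $y\leftrightarrow z$ counterpart), then apply (\ref{equation:change-n-even}) to pull out the target theta quotient, leaving the combination $1+(z/(qy))\,m(-z/(qy),q^2,-y)+(y/(qz))\,m(-y/(qz),q^2,qz)$, which vanishes via successive application of (\ref{equation:mxqz-fnq-newz}), (\ref{equation:mxqz-flip}), (\ref{equation:mxqz-fnq-z}), and (\ref{equation:mxqz-fnq-x}). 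For $n=2m+1$, the remainder is
$$-\frac{q^{m^2+3m+1}}{y^mz^m}\bigl[y^{-1}m(-z/y,q^2,y)+z^{-1}m(-y/z,q^2,z)\bigr],$$
which by (\ref{equation:mxqz-flip}) equals $(q^{m^2+3m+1}/(y^mz^m))\cdot z^{-1}\bigl[m(-y/z,q^2,1/y)-m(-y/z,q^2,z)\bigr]$; Proposition \ref{proposition:changing-z} and simplification via (\ref{equation:j-inv}) then give the claimed theta quotient.

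The main obstacle is bookkeeping, in particular tracking $q$-powers, signs, and parity-dependent formulas through the shift (\ref{equation:mxqz-induction}) and the elliptic transformation (\ref{equation:j-elliptic}). The conceptually subtle point is the boundary term in the even case, which is precisely what is absorbed by the $-1$ in $m(-qz/y,q^2,qy)+m(-qy/z,q^2,qz)-1$ via the identity (\ref{equation:change-n-even}).
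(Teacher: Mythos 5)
Your proposal is correct and follows essentially the same route as the paper: isolate the two summands of $G_1$ with poles at $x=q^n$, compute residues via Proposition \ref{proposition:H1Thm1.3}, recognize the resulting sums as Appell--Lerch functions, peel off the finite sum with (\ref{equation:mxqz-induction}) (noting the double-counted middle term when $n=2m$), and produce the theta quotient from (\ref{equation:change-n-even}) resp.\ Proposition \ref{proposition:changing-z}. The only differences are cosmetic bookkeeping in how the residual $m$-function combination is cancelled, plus the fact that you spell out the odd case which the paper omits as ``similar.''
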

\begin{proof} Beginning with Proposition \ref{proposition:H1Thm1.3}, we have
{\allowdisplaybreaks \begin{align*}
&\lim_{x\rightarrow q^n}(x-q^n) G_1(x,y,z;q)\\
&=(-1)^{n+1}q^{\binom{n+1}{2}}\frac{j(q^ny;q^2)}{j(y;q)}\frac{J_1}{J_2^2}\sum_{k\in\mathbb{Z}}\frac{(-1)^kq^{k^2-k}(q^{n+1}y)^k}{1+q^{2k}z}\\
&\ \ \ \ \ + (-1)^{n+1}q^{\binom{n+1}{2}}\frac{j(q^nz;q^2)}{j(z;q)}\frac{J_1}{J_2^2}\sum_{k\in\mathbb{Z}}\frac{(-1)^kq^{k^2-k}(q^{n+1}z)^k}{1+q^{2k}y}\\
&=(-1)^{n+1}q^{\binom{n+1}{2}}\frac{j(q^ny;q^2)}{j(y;q)}\frac{J_1}{J_2^2}j(q^{n+1}y;q^2)m(-\frac{q^2z}{q^{n+1}y},q^2,q^{n+1}y)&(\textup{by }(\ref{equation:mdef-eq}))\\
&\ \ \ \ \ + (-1)^{n+1}q^{\binom{n+1}{2}}\frac{j(q^nz;q^2)}{j(z;q)}\frac{J_1}{J_2^2}j(q^{n+1}z;q^2)m(-\frac{q^2y}{q^{n+1}z},q^2,q^{n+1}z)\\
&=-q^{n}y^{-n}m(-\frac{q^2z}{q^{n+1}y},q^2,q^{n+1}y) -q^{n}z^{-n}m(-\frac{q^2y}{q^{n+1}z},q^2,q^{n+1}z)&(\textup{by }(\ref{equation:j-mod-2}), (\ref{equation:j-elliptic}))\\
&=\frac{q^{2n-1}y}{zy^n}m(-\frac{q^{n-1}y}{z},q^2,-z) +\frac{q^{2n-1}z}{yz^n}m(-\frac{q^{n-1}z}{y},q^2,-y).&(\textup{by }(\ref{equation:mxqz-flip}), (\ref{equation:mxqz-fnq-newz}))
\end{align*}}%
If $n=2m$, then
{\allowdisplaybreaks \begin{align*}
&\frac{q^{2n-1}y}{zy^n}m(-\frac{q^{n-1}y}{z},q^2,-z) +\frac{q^{2n-1}z}{yz^n}m(-\frac{q^{n-1}z}{y},q^2,-y)\\
&=\frac{q^{4m}y}{qzy^{2m}}m(-\frac{q^{2m}y}{qz},q^2,-z) +\frac{q^{4m}z}{qyz^{2m}}m(-\frac{q^{2m}z}{qy},q^2,-y)\\
&=\frac{q^{4m}y}{qzy^{2m}}\Big ( \sum_{k=0}^{m-1}q^{2k(m-1)-2\binom{k}{2}}(\frac{y}{qz})^k+q^{2\binom{m}{2}}(\frac{y}{qz})^{m}m(-\frac{y}{qz},q^2,-z) \Big )&(\textup{by }(\ref{equation:mxqz-induction}))\\
&\ \ \ \ \ +\frac{q^{4m}z}{qyz^{2m}}\Big ( \sum_{k=0}^{m-1}q^{2k(m-1)-2\binom{k}{2}}(\frac{z}{qy})^k+q^{2\binom{m}{2}}(\frac{z}{qy})^{m}m(-\frac{z}{qy},q^2,-y) \Big )\\
&=  \sum_{k=0}^{m-1}\frac{q^{2m(k+1)+2m-(k+1)^2}}{z^{k+1}y^{2m-(k+1)}}+ \frac{y}{qz}\frac{q^{m^2+2m}}{y^mz^m}m(-\frac{y}{qz},q^2,-z) \\
&\ \ \ \ \ + \sum_{k=0}^{m-1}\frac{q^{2m(k+1)+2m-(k+1)^2}}{y^{k+1}z^{2m-(k+1)}}+ \frac{z}{qy}\frac{q^{m^2+2m}}{y^mz^m}m(-\frac{z}{qy},q^2,-y)\\
&=  \sum_{k=1}^{m}\frac{q^{2mk+2m-k^2}}{z^{k}y^{2m-k}}- \frac{q^{m^2+2m}}{y^mz^m}-\frac{z}{qy} \frac{q^{m^2+2m}}{y^mz^m}m(-\frac{z}{qy},q^2,-\frac{1}{z})&(\textup{by }(\ref{equation:mxqz-flip}),\ (\ref{equation:mxqz-fnq-x})) \\
&\ \ \ \ \ + \sum_{k=1}^{m}\frac{q^{2mk+2m-k^2}}{y^{k}z^{2m-k}}+ \frac{z}{qy}\frac{q^{m^2+2m}}{y^mz^m}m(-\frac{z}{qy},q^2,-y)\\
&=\sum_{s=1}^{n-1}\frac{q^{s(n-s)+n}}{y^{s}z^{n-s}}-\frac{z}{qy}\frac{q^{m^2+2m}}{y^mz^m}\Big [ m(-\frac{z}{qy},q^2,qy)
 -m(-\frac{z}{qy},q^2,-y)\Big ].&( \textup{by }(\ref{equation:mxqz-fnq-newz}))
\end{align*}}%
The result then follows from (\ref{equation:change-n-even}).  The case $n=2m+1$ is similar and will be omitted.
\end{proof}

\begin{lemma} \label{lemma:G2-n} For fixed $y,z\in\mathbb{C}^*$ where $|q|<|y|<1$ and $|q|<|z|<1$, the function $G_2(x,y,z;q)$ has simple poles at $x_0=q^{n}$, where $n\in\mathbb{Z}$, with respective residues
\begin{equation}
\frac{q^{m^2+2m}}{y^mz^m}\cdot \frac{J_2^3j(-q;q^2)j(yz;q^2)}{j(-y;q^2)j(-z;q^2)j(yq;q^2)j(zq;q^2)}
\end{equation}
for $n=2m$, and
\begin{equation}
-\frac{q^{m^2+3m+1}}{y^mz^m}\cdot \frac{J_2^3j(-1;q^2)j(yz;q^2)}{j(y;q^2)j(-y;q^2)j(z;q^2)j(-z;q^2)}
\end{equation}
for $n=2m+1$.
\end{lemma}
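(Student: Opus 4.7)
The function $G_{2}(x,y,z;q)$ has $x$-dependence only through the factors $j(x;q)^{-1}$, $j(xy;q^{2})$, $j(xz;q^{2})$, $j(-x;q^{2})^{-1}$. Since $|q|<|y|,|z|<1$, the factors $j(xy;q^{2})$ and $j(xz;q^{2})$ are entire (nonvanishing on the candidate poles), and $j(-x;q^{2})$ has no zeros at $x=q^{n}$. So the poles of $G_{2}$ are exactly the simple poles at $x_{0}=q^{n}$ coming from $j(x;q)^{-1}$. The plan is to read off the residue by applying Proposition~\ref{proposition:H1Thm1.3} (with $b=m=1$, $\beta=1$) to $1/j(x;q)$, substitute $x=q^{n}$ into the remaining factors, and then clean up by the elliptic transformation (\ref{equation:j-elliptic}) and the base-change identity (\ref{equation:j-mod-2}), split according to the parity of $n$.

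Applied to $G_{2}$, Proposition~\ref{proposition:H1Thm1.3} gives
\begin{equation*}
\operatorname{Res}_{x=q^{n}} G_{2}(x,y,z;q)
= \frac{2(-1)^{n}\,q^{\binom{n+1}{2}}\, J_{2}^{3}\, j(q^{n}y;q^{2})\,j(q^{n}z;q^{2})\,j(yz;q^{2})}{j(y;q)\,j(z;q)\,j(-q^{n};q^{2})\,j(-y;q^{2})\,j(-z;q^{2})}.
\end{equation*}
Next I apply (\ref{equation:j-elliptic}) with base $q^{2}$ to strip the $q^{n}$'s out of the $q^{2}$-theta functions, treating $n=2m$ and $n=2m+1$ separately. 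In the even case this rewrites $j(q^{2m}y;q^{2})$, $j(q^{2m}z;q^{2})$, $j(-q^{2m};q^{2})$ as $q$-powers times $j(y;q^{2})$, $j(z;q^{2})$, $j(-1;q^{2})$, respectively; in the odd case one writes $q^{2m+1}y=q^{2m}(qy)$ etc.\ to get $j(qy;q^{2})$, $j(qz;q^{2})$, $j(-q;q^{2})$ instead. A routine bookkeeping of the $q$-powers and signs collapses the prefactor to $q^{m^{2}+2m}y^{-m}z^{-m}$ (resp.\ $-q^{m^{2}+3m+1}y^{-m}z^{-m}$).

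It remains to reconcile the surviving theta factors with the stated formula. I use (\ref{equation:j-mod-2}) in the form $j(y;q^{2})/j(y;q)=J_{2}^{2}/(J_{1}\,j(qy;q^{2}))$ and $j(qy;q^{2})/j(y;q)=J_{2}^{2}/(J_{1}\,j(y;q^{2}))$ (and analogously for $z$): in the even case this converts $j(y;q^{2})j(z;q^{2})/(j(y;q)j(z;q))$ into $J_{2}^{4}/(J_{1}^{2}j(qy;q^{2})j(qz;q^{2}))$, matching the denominator $j(qy;q^{2})j(qz;q^{2})$ of the claim; the odd case is analogous with $j(y;q^{2})j(z;q^{2})$ appearing in the denominator. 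The residual constant infinite products are then reduced using the standard evaluations $j(-1;q^{2})=2J_{4}^{2}/J_{2}$ and $j(-q;q^{2})=J_{2}^{5}/(J_{1}^{2}J_{4}^{2})$, which together give the identities
\begin{equation*}
\frac{2J_{2}^{4}}{J_{1}^{2}j(-1;q^{2})}=j(-q;q^{2}), \qquad \frac{2J_{2}^{4}}{J_{1}^{2}j(-q;q^{2})}=j(-1;q^{2}),
\end{equation*}
which are exactly what is needed to complete the even and odd cases respectively.

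The computation is entirely mechanical; the only real care required is tracking signs, powers of $q$, and the combinatorics of applying (\ref{equation:j-elliptic}) to three different factors simultaneously in each parity case. The main opportunity for error, and hence the main step to verify carefully, is the cancellation of the $q$-exponents $\binom{n+1}{2}-2\binom{m}{2}\cdot 2+\binom{m}{2}\cdot 0$ (appropriately adjusted) into the clean $m^{2}+2m$ and $m^{2}+3m+1$ of the statement.
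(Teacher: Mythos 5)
Your proposal is correct and follows essentially the same route as the paper: apply Proposition~\ref{proposition:H1Thm1.3} to the factor $1/j(x;q)$, substitute $x=q^{n}$, reduce with (\ref{equation:j-elliptic}) and (\ref{equation:j-mod-2}) split by the parity of $n$, and finish with the rearrangements $\overline{J}_{0,2}=2J_4^2/J_2$ and $\overline{J}_{1,2}=J_2^5/J_1^2J_4^2$ (your two displayed constant identities are exactly these). The exponent bookkeeping you flag does check out, giving $q^{m^2+2m}$ and $-q^{m^2+3m+1}$ as claimed; the paper carries out only the $n=2m$ case and omits the odd case as similar, just as you do.
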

\begin{proof}  Let us consider the case $n=2m$.  Using Proposition \ref{proposition:H1Thm1.3}, we have
{\allowdisplaybreaks \begin{align*}
\lim_{x\rightarrow q^n}&(x-q^n) G_2(x,y,z;q)\\
&=(-1)^{2m}q^{\binom{2m+1}{2}}\frac{2J_1^3J_2^3}{J_1^3j(y;q)j(z;q)} \frac{j(q^{2m}y;q^2)j(q^{2m}z;q^2)j(yz;q^2)}{j(-q^{2m};q^2)j(-y;q^2)j(-z;q^2)}\\
&=\frac{q^{m^2+2m}}{y^mz^m}\cdot  \frac{2J_2^3}{j(y;q)j(z;q)} \frac{j(y;q^2)j(z;q^2)j(yz;q^2)}{j(-1;q^2)j(-y;q^2)j(-z;q^2)}&(\textup{by }(\ref{equation:j-elliptic}))\\
&=\frac{q^{m^2+2m}}{y^mz^m}\cdot \frac{2J_2^3J_2^4}{J_1^2j(yq;q^2)j(zq;q^2)} \frac{j(yz;q^2)}{j(-1;q^2)j(-y;q^2)j(-z;q^2)}&(\textup{by }(\ref{equation:j-mod-2})).
\end{align*}}%
The result then follows from the product rearrangements $\overline{J}_{0,2}=2J_4^2/J_2$ and $\overline{J}_{1,2}=J_2^5/J_1^2J_4^2$.  The case $n=2m+1$  is similar and will be omitted.
\end{proof}

\begin{lemma} \label{lemma:G1-minus2n} For fixed $y,z\in\mathbb{C}^*$ where $|q|<|y|<1$ and $|q|<|z|<1$, the function $G_1(x,y,z;q)$  has simple poles at $x_0=-q^{2n}$, where $n\in\mathbb{Z}$, with respective residues
\begin{equation}
(-1)^n\frac{q^{n^2+2n}}{y^nz^n} \cdot \frac{j(yz;q^2)}{j(y;q)j(z;q)}\cdot \frac{J_1^4}{J_2^2}.
\end{equation}
\end{lemma}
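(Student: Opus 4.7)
The plan is to isolate which of the three summands defining $G_1(x,y,z;q)$ contributes a singularity at $x_0=-q^{2n}$, and then to read off the residue by a direct calculation.

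First I would check that the second and third summands of (\ref{equation:G1-def}) are analytic at $x_0=-q^{2n}$. Their only possible $x$-singularities come from the prefactors $j(xy;q^2)/j(x;q)$ and $j(xz;q^2)/j(x;q)$; however, by definition $j(x;q)$ vanishes only at $x=q^m$, $m\in\Z$, none of which equal $-q^{2n}$. The inner sums $\sum_k (-1)^k q^{k^2}(xy)^k/(1+q^{2k}z)$ and $\sum_k (-1)^k q^{k^2}(xz)^k/(1+q^{2k}y)$ have $x$-independent denominators, while the super-exponential decay of $q^{k^2}$ ensures absolute convergence for every $x\in\C^*$, so these two summands are analytic at $x_0=-q^{2n}$.

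Next, in the first summand of (\ref{equation:G1-def}) the only $x$-dependence lies in the factors $1/(1+q^{2k}x)$, which vanish when $x=-q^{-2k}$. Matching $-q^{-2k}=-q^{2n}$ forces the unique choice $k=-n$, so exactly one term in the series is singular at $x_0$, and the pole is simple since $1+q^{2k}x$ is linear in $x$. A direct residue computation,
$$\operatorname*{Res}_{x=-q^{2n}} \frac{(-1)^{-n}q^{n^2}(yz)^{-n}}{1+q^{-2n}x}= \frac{(-1)^n q^{n^2}(yz)^{-n}}{q^{-2n}}= (-1)^n\frac{q^{n^2+2n}}{(yz)^n},$$
multiplied by the $x$-independent prefactor $\frac{j(yz;q^2)}{j(y;q)j(z;q)}\cdot\frac{J_1^4}{J_2^2}$, produces exactly the formula claimed in the lemma.

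The computation is essentially mechanical, so I do not anticipate a real obstacle. The only step that needs a modicum of care is the convergence argument in the first paragraph, which is what rules out the possibility that the second and third summands conspire to produce a hidden pole at $x_0=-q^{2n}$.
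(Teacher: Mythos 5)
Your proposal is correct and is exactly the argument the paper has in mind: the paper's proof of this lemma is simply ``immediate from the definition of $G_1$,'' and your computation — locating the unique singular term $k=-n$ in the first sum, checking the other two summands are regular at $x_0=-q^{2n}$, and reading off the residue $(-1)^nq^{n^2+2n}(yz)^{-n}$ times the prefactor — is the intended justification spelled out in full.
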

\begin{proof} This is immediate from the definition of $G_1(x,y,z;q)$, e.g. (\ref{equation:G1-def}).
\end{proof}

\begin{lemma} \label{lemma:G2-minus2n} For fixed $y,z\in\mathbb{C}^{*}$ where $|q|<|y|<1$ and $|q|<|z|<1$, the function $G_2(x,y,z;q)$ has simple poles at $x_0=-q^{2n}$, where $n\in\mathbb{Z}$, with respective residues
\begin{equation}
-(-1)^n\frac{q^{n^2+2n}}{y^nz^n} \cdot \frac{j(yz;q^2)}{j(y;q)j(z;q)}\cdot \frac{J_1^4}{J_2^2}.
\end{equation}
\end{lemma}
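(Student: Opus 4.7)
The plan is to mimic the argument used for the even case of Lemma \ref{lemma:G2-n}, but now applied to the poles arising from the factor $1/j(-x;q^2)$ in the denominator of $G_2(x,y,z;q)$ (rather than from $1/j(x;q)$). Since the numerator of $G_2$ depends on $x$ only through the entire functions $j(xy;q^2)$ and $j(xz;q^2)$, and since $j(-q^{2n};q)\neq 0$, the only source of a pole at $x_0=-q^{2n}$ is $1/j(-x;q^2)$, and that pole is simple.

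First, I would apply Proposition \ref{proposition:H1Thm1.3} with $\beta=-1$, $b=1$, $m=2$ to obtain
\[
\mathop{\mathrm{Res}}_{x=-q^{2n}}\frac{1}{j(-x;q^2)}=\frac{(-1)^n q^{n^2+n}}{J_2^3}.
\]
Multiplying this by the value at $x=-q^{2n}$ of the remaining factor
\[
-2\,\frac{J_1^3 J_2^3\, j(xy;q^2)j(xz;q^2)j(yz;q^2)}{j(x;q)j(y;q)j(z;q)\,j(-y;q^2)j(-z;q^2)}
\]
gives the desired residue, after simplification.

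Second, I would simplify the arguments using the elliptic transformation (\ref{equation:j-elliptic}): applied with $q\mapsto q^2$ one gets
\[
j(-q^{2n}y;q^2)=q^{-n^2+n}y^{-n}\,j(-y;q^2),\qquad j(-q^{2n}z;q^2)=q^{-n^2+n}z^{-n}\,j(-z;q^2),
\]
and applied with $q\mapsto q$, $x=-1$, $n\mapsto 2n$ one gets
\[
j(-q^{2n};q)=q^{-n(2n-1)}\,j(-1;q).
\]
The factors $j(-y;q^2)$ and $j(-z;q^2)$ then cancel between numerator and denominator, and collecting all powers of $q$ yields the exponent $n(2n-1)-2n^2+2n+n^2+n=n^2+2n$.

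Finally, I would replace $j(-1;q)$ by a pure $J_k$-quotient. The product rearrangement $(-q;q)_\infty=J_2/J_1$ gives $j(-1;q)=2(-q;q)_\infty^2(q;q)_\infty=2J_2^2/J_1$, so that $2J_1^3/j(-1;q)=J_1^4/J_2^2$. Assembling everything produces exactly
\[
-(-1)^n\frac{q^{n^2+2n}}{y^n z^n}\cdot\frac{j(yz;q^2)}{j(y;q)j(z;q)}\cdot\frac{J_1^4}{J_2^2},
\]
as claimed. The one thing to be careful about is the bookkeeping of the many signs and $q$-powers; this is the only real obstacle, and it is routine once one tracks them through (\ref{equation:j-elliptic}) systematically. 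Comparing with Lemma \ref{lemma:G1-minus2n} one sees that this residue is the negative of the residue of $G_1$ at the same point, which is the cancellation needed so that $H=F-G$ has no pole at $x_0=-q^{2n}$ (where $F$ has no pole at all).
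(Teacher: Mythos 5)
Your proposal is correct and follows essentially the same route as the paper: apply Proposition \ref{proposition:H1Thm1.3} to the factor $1/j(-x;q^2)$ (the sole source of the simple pole at $x_0=-q^{2n}$), evaluate the remaining factor there, simplify $j(-q^{2n}y;q^2)$, $j(-q^{2n}z;q^2)$, and $j(-q^{2n};q)$ via (\ref{equation:j-elliptic}), and finish with the rearrangement $j(-1;q)=2J_2^2/J_1$. Your intermediate residue $(-1)^nq^{n^2+n}/J_2^3$ and the exponent bookkeeping giving $n^2+2n$ both check out against the paper's computation.
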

\begin{proof} Using Proposition \ref{proposition:H1Thm1.3}, we have
{\allowdisplaybreaks \begin{align*}
&\lim_{x\rightarrow x_0}(x-x_0)G_2(x,y,z;q)\\
&=-2\cdot \frac{J_1^3J_2^3}{j(-q^{2n};q)j(y;q)j(z;q)}\cdot \frac{-(-1)^{n+1}q^{2\binom{n}{2}}q^{2n}}{J_2^3}\cdot
\frac{j(-q^{2n}y;q^2)j(-q^{2n}z;q^2)j(yz;q^2)}{j(-y;q^2)j(-z;q^2)}\\
&=- (-1)^{n}\frac{q^{n^2+2n}}{y^nz^n}\cdot \frac{j(yz;q^2)}{j(y;q)j(z;q)}\cdot \frac{J_1^4}{J_2^2},
\end{align*}}%
where we have used (\ref{equation:j-elliptic}) and the product rearrangement $\overline{J}_{0,1}=2J_2^2/J_1$.
\end{proof}

\begin{proof}[Proof of Proposition \ref{proposition:H-analytic}]  For a fixed $y$ and $z$ as in the proposition, we note that the only potential singularities of $H(x,y,z;q)$ are simple poles at $x_0=q^n$ and $x_0=-q^{2n}$ where $n\in\mathbb{Z}$.  Simple poles of the form $x_0=q^n$ occur in the functions $F(x,y,z;q)$, $G_1(x,y,z;q)$, and $G_2(x,y,z;q)$, so by Lemmas \ref{lemma:F1-n}, \ref{lemma:G1-n}, and \ref{lemma:G2-n}, we know that the residues sum to zero.  Simple poles of the form $x_0=-q^{2n}$ occur in the functions $G_1(x,y,z;q)$ and $G_2(x,y,z;q)$, so by Lemmas \ref{lemma:G1-minus2n} and \ref{lemma:G2-minus2n}, we know that the residues sum to zero.  Hence the function $H(x,y,z;q)$ is analytic for $x\ne 0$.
\end{proof}

\section{Proof of Theorem \ref{theorem:result}}\label{section:proof}

We fix $y,z\in\mathbb{C}^*$ such that $|q|<|y|<1$ and $|q|<|z|<1$.  We recall the function
\begin{align}
H(x,y,z;q):=F(x,y,z;q)-G(x,y,z;q), \label{equation:diff-fun}
\end{align}
where $F$  and $G$ are the respective left and right-hand sides of (\ref{equation:thm-result}).  By Proposition \ref{proposition:H-analytic}, the difference function (\ref{equation:diff-fun}) is analytic for $x\ne 0$, thus our function $H$ can be written as a Laurent series in $x$ valid for all $x\ne 0$
\begin{align}
H(x,y,z;q)=\sum_{m\in\mathbb{Z}}C_mx^m, \label{equation:H-Laurent}
\end{align}
where the $C_m$ depend on $y$, $z$, and $q$.  Proposition \ref{proposition:prop-functional} yields the functional equation
\begin{align}
H(q^2x,y,z;q)=\frac{xq}{yz}H(x,y,z;q).\label{equation:H-functional}
\end{align}
Inserting (\ref{equation:H-functional}) into the Laurent series (\ref{equation:H-Laurent}) yields 
\begin{align}
\sum_{m\in\mathbb{Z}}C_m(q^2x)^m=\frac{xq}{yz}\sum_{m\in\mathbb{Z}}C_mx^m,
\end{align}
which gives
\begin{align}
C_mq^{2m}=\frac{q}{yz}C_{m-1},
\end{align}
or
\begin{align}
C_{m+1}=\frac{q^{-1-2m}}{yz}C_{m}.
\end{align}
Iteration yields for $k\in\mathbb{Z}$
\begin{align}
C_{k}=y^{-k}z^{-k}q^{-k^2}C_0.
\end{align}
Hence
\begin{equation}
H(x,y,z;q)=C_0\sum_{k\in\mathbb{Z}}y^{-k}z^{-k}q^{-k^2}x^k.
\end{equation}
Because $H$ is analytic for $x\ne 0$, we can use, say, the ratio test to conclude that $C_0=0$.  It follow that for $x, y,z\in\mathbb{C}^*$ such that $|q|<|y|<1$, $|q|<|z|<1$, and $x$ neither zero or an integral power of $q$, we have Theorem \ref{theorem:result}:
\begin{align}
F(x,y,z;q)=G(x,y,z;q).
\end{align}

\section{Proof of Theorem \ref{theorem:same-parity}}\label{section:same}
In this section, we redefine $F(x,y,z;q)$, $G(x,y,z;q)$, $G_1(x,y,z;q)$, $G_2(x,y,z;q)$, and $H(x,y,z;q)$ for the purpose of proving Theorem \ref{theorem:same-parity}.  We define
\begin{align}
F(x,y,z;q)&:=\sum_{\sg(s)=\sg(t)}\sg(s)\frac{q^{4st}y^{2s}z^{2t}}{1-x^2q^{4s+4t}}
+x\sum_{\sg(s)=\sg(t)}\sg(s)\frac{q^{4st+4s+4t+3}y^{2s+1}z^{2t+1}}{1-x^2q^{4s+4t+4}}
\end{align}
and
\begin{align}
G(x,y,z;q)&:=\frac{J_4^3j(y^2z^2;q^4)}{j(y^2;q^4)j(z^2;q^4)} m \big (-\frac{qx}{yz},q^{2},-yz \big ) 
+\frac{J_4^3j(x^2z^2;q^4)}{j(x^2;q^4)j(z^2;q^4)} m \big (-\frac{qy}{xz},q^{2},-xz \big ) \notag\\
&\ \ \ \ \ \ \ \ \ \ \ \ \ \ \ +\frac{J_1^3J_2^3}{j(x;q)j(y;q)j(z;q)} \frac{j(xy;q^2)j(xz;q^2)j(yz;q^2)}{j(-x;q^2)j(-y;q^2)j(-z;q^2)}\\
&\ \ \ \ \ \ \ \ \ \ \ \ \ \ \  + \frac{J_4^3j(x^2y^2;q^4)}{j(x^2;q^4)j(y^2;q^4)}  m \big (-\frac{qz}{xy},q^{2},-xy \big )\notag 
\end{align}
and prove a stronger theorem, which gives as a corollary Theorem \ref{theorem:same-parity}.
\begin{theorem} For  $x,y,z\in \mathbb{C}^*$ where $|q|<|y|<1$,  $|q|<|z|<1$, and $x$ neither zero or of the form $x=\pm q^{2n}$, where $n\in\mathbb{Z}$, we have
\begin{equation}
F(x,y,z;q)=G(x,y,z;q).
\end{equation}
\end{theorem}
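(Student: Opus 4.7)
The plan is to mimic the three-step strategy used to prove Theorem~\ref{theorem:result} in Sections~\ref{section:functional}--\ref{section:proof}, now with the redefined $F$ and $G$ of this section.

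The first step is to establish a common functional equation under $x\mapsto q^2x$ satisfied by $F(x,y,z;q)$ and $G(x,y,z;q)$ modulo explicit theta-quotient corrections, so that the difference $H:=F-G$ obeys a homogeneous relation of the form $H(q^2x,y,z;q)=\lambda(x,y,z;q)H(x,y,z;q)$ for an explicit multiplier $\lambda$ (most likely $\lambda=qx/(yz)$, as in Proposition~\ref{proposition:prop-functional}). For $G$ this is a direct calculation using (\ref{equation:mxqz-fnq-z}), (\ref{equation:mxqz-fnq-x}), and (\ref{equation:j-elliptic}), in exact parallel to the $G$-computation in Proposition~\ref{proposition:prop-functional}. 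For $F$, the role played in Section~\ref{section:functional} by Kronecker's identity (\ref{equation:kronecker}) is now taken by Hickerson's identities (\ref{equation:HThm1.6}) and (\ref{equation:Hid1pt32}): multiplying and dividing each summand in $F$ by $1-x^2q^{4s+4t+4}$ and re-indexing the $s,t$ sums by (\ref{equation:H2id1.15}) should produce boundary sums on $s$ and on $t$ that collapse via (\ref{equation:HThm1.6})--(\ref{equation:Hid1pt32}) to the same theta-quotient corrections as those coming from $G$.

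The second step is to show that $H(x,y,z;q)$ is analytic on the complement in $\mathbb{C}^*$ of $\{\pm q^{2n}:n\in\mathbb{Z}\}$. Here $F$ has simple poles only at $x=\pm q^{2n}$, coming from the denominators $1-x^2q^{4s+4t}$ and $1-x^2q^{4s+4t+4}$. To analyze $G$, I would decompose $G=G_1+G_2$, where $G_1$ is the sum of the three Appell--Lerch summands with each $m$ rewritten as an $x$-sum via (\ref{equation:mdef-eq}) and $G_2$ is the lone theta-quotient. Then, in the manner of Lemmas~\ref{lemma:F1-n}--\ref{lemma:G2-minus2n}, I would compute the residues of the various pieces at each potential pole $x_0=q^n$ and $x_0=-q^{2n}$ via Proposition~\ref{proposition:H1Thm1.3}, splitting each case into $n=2m$ and $n=2m+1$ subcases and simplifying the resulting Appell--Lerch expressions using (\ref{equation:j-elliptic}), (\ref{equation:j-mod-2}), (\ref{equation:mxqz-flip}), (\ref{equation:mxqz-fnq-newz}), and the induction formula (\ref{equation:mxqz-induction}). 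The goal is to match the residues of $F$ exactly at $x=\pm q^{2n}$ and to force internal cancellation among the pieces of $G$ at the spurious poles $x=q^{2n+1}$ (where $F$ is regular).

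With the functional equation and analyticity in hand, the third step is essentially automatic: write $H(x,y,z;q)=\sum_{m\in\mathbb{Z}}C_mx^m$ as a Laurent series valid on the domain of analyticity, substitute into the homogeneous functional equation to recover a recursion forcing $C_k=y^{-k}z^{-k}q^{-k^2}C_0$ for all $k\in\mathbb{Z}$, and finally use the fact that this Laurent series must represent an analytic function to conclude by a ratio-test argument that $C_0=0$, giving $H\equiv 0$. The main obstacle will be the residue bookkeeping in Step~2: it is more delicate than in Section~\ref{section:analytic} because the pole structure is richer (both $x=\pm q^{2n}$-type poles from $F$ and the spurious $x=q^{2n+1}$ poles from $G$ must be handled), and reducing the Appell--Lerch residues of $G_1$ to an explicit polynomial in $y,z$ that matches the finite residue of $F$ at $x=\pm q^{2n}$ requires iterated use of (\ref{equation:mxqz-induction}) together with careful even/odd case analysis. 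Once that residue computation is carried through, Steps~1 and~3 follow the templates of Sections~\ref{section:functional} and~\ref{section:proof} essentially verbatim.
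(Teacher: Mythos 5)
Your proposal follows the paper's proof of this theorem essentially verbatim: Proposition \ref{proposition:prop-functional-same} establishes the common functional equation with multiplier $qx/(yz)$ and matching inhomogeneous theta terms, the decomposition $G=G_1+G_2$ together with Lemmas \ref{lemma:F1-n-pos-same}--\ref{lemma:G2-n-same} carries out exactly the residue bookkeeping you describe (matching residues at $x_0=\pm q^{2n}$, with internal cancellation between $G_1$ and $G_2$ at the spurious poles $x_0=q^{2n+1}$ where $F$ is regular), and the Laurent-series argument of Section \ref{section:proof} then finishes. The one small correction is in your Step 1: after telescoping, the boundary sums are the plain Kronecker sums $\sum_{\sg(s)=\sg(t)}\sg(s)q^{4st}y^{2s}z^{2t}$ and $\sum_{s\in\mathbb{Z}}y^{2s}/(1-x^2q^{4s})$ in base $q^4$ with squared variables, so the paper evaluates them with (\ref{equation:kronecker}) and (\ref{equation:kronecker-original}) rather than with Hickerson's parity-constrained identities (\ref{equation:HThm1.6})--(\ref{equation:Hid1pt32}), which do not have the form of the sums that actually arise.
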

In particular, if we impose the additional restriction $|q|<|x|<1$ and use the geometric series, we see
{\allowdisplaybreaks \begin{align}
F(x,y,z;q)&=\sum_{\sg(s)=\sg(t)}\sg(s)\frac{q^{4st}y^{2s}z^{2t}}{1-x^2q^{4s+4t}}
+x\sum_{\sg(s)=\sg(t)}\sg(s)\frac{q^{4st+4s+4t+3}y^{2s+1}z^{2t+1}}{1-x^2q^{4s+4t+4}}\label{equation:split-F} \\
&=\sum_{\sg(r)=\sg(s)=\sg(t)}q^{4(st+rs+rt)}x^{2r}y^{2s}z^{2t}\notag \\
&\ \ \ \ \ +xyzq^3\sum_{\sg(r)=\sg(s)=\sg(t)}q^{4(st+rs+rt)}(q^4x^2)^{r}(q^4y^2)^{s}(q^4z^2)^{t}\notag \\
&=\sum_{\sg(r)=\sg(s)=\sg(t)}q^{(2r)(2s)+(2r)(2t)+(2s)(2t)}x^{2r}y^{2s}z^{2t}\notag \\
&\ \ \ \ \ +\sum_{\sg(r)=\sg(s)=\sg(t)}q^{(2r+1)(2s+1)+(2r+1)(2t+1)+(2s+1)(2t+1)}x^{2r+1}y^{2s+1}z^{2t+1}\notag\\
&=\sum_{\substack{\sg(r)=\sg(s)=\sg(t)\\r\equiv s \pmod 2 }}q^{rs+rt+st}x^{r}y^{s}z^{t}.\notag
\end{align}}%

For the remainder of this section we will give the analogs of the proposition and lemmas to what one finds in Sections \ref{section:functional} and \ref{section:analytic}.  Once that is done, the proof of Theorem \ref{theorem:same-parity} is exactly the same as what one finds in Section \ref{section:proof}, so we will omit it.    

\begin{proposition}  \label{proposition:prop-functional-same} For  $x,y,z\in \mathbb{C}^*$ where $|q|<|y|<1$, $|q|<|z|<1$, and $x$ is generic, the functions $F(x,y,z;q)$ and $G(x,y,z;q)$ satisfy the functional equation
\begin{align}
M(q^2x,y,z;q)&=\frac{xq}{yz}M(x,y,z;q) +\frac{J_4^3j(y^2z^2;q^4)}{j(y^2;q^4)j(z^2;q^4)}\\
&\ \ \ \ \ - \frac{qx}{yz} \frac{J_4^3j(x^2y^2;q^4)}{j(x^2;q^4)j(y^2;q^4)} 
  -\frac{qx}{yz} \frac{J_4^3j(x^2z^2;q^4)}{j(x^2;q^4)j(z^2;q^4)}.\notag
\end{align}
\end{proposition}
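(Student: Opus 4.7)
The plan is to verify the functional equation separately for $F$ and $G$, following the template of Proposition~\ref{proposition:prop-functional}. For $G(x,y,z;q)$, I will substitute $x\mapsto q^2 x$ term by term and unwind using the standard functional equations. In the first Appell--Lerch summand, the argument $-qx/(yz)$ becomes $q^2\cdot(-qx/(yz))$, so (\ref{equation:mxqz-fnq-x}) at level $q^2$ converts $m(-q^3 x/(yz),q^2,-yz)$ into $1+(qx/yz)\,m(-qx/(yz),q^2,-yz)$, producing a constant $\frac{J_4^3 j(y^2z^2;q^4)}{j(y^2;q^4)j(z^2;q^4)}$ plus $(qx/yz)$ times the original term. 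In the second (and analogously the fourth) Appell--Lerch summand, (\ref{equation:j-elliptic}) contracts the theta coefficient by $1/z^2$ (respectively $1/y^2$), while (\ref{equation:mxqz-fnq-z}) absorbs the extra $q^2$ inside the third $m$-argument and (\ref{equation:mxqz-fnq-x}) inverted undoes the $q^{-2}$ rescaling, outputting a constant $-\frac{qx}{yz}\cdot\frac{J_4^3 j(x^2z^2;q^4)}{j(x^2;q^4)j(z^2;q^4)}$ plus $(qx/yz)$ times the original term. The theta-quotient piece acquires a factor $qx/(yz)$ after applying (\ref{equation:j-elliptic}) to $j(q^2x;q)$, $j(q^2xy;q^2)$, $j(q^2xz;q^2)$, and $j(-q^2x;q^2)$. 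Collecting these contributions yields the stated functional equation for $G$.

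For $F(x,y,z;q)$, I will write $F = F_1 + F_2$ where the two pieces are the two sums in the definition. The key structural observation is that $F_1(q^2x;q)$ and $F_2(x;q)$ share the denominator $1-x^2q^{4s+4t+4}$, whereas $F_1(x;q)$ emerges from $F_2(q^2x;q)$ after the index shift $(s,t)\mapsto(s-1,t-1)$. I will therefore prove two cross-relations which add to give the functional equation.

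For the first cross-relation, I take Kronecker's identity (\ref{equation:kronecker}) at level $q^4$ with parameters $y^2,z^2$, namely $\frac{J_4^3 j(y^2z^2;q^4)}{j(y^2;q^4)j(z^2;q^4)} = \sum_{\sg(s)=\sg(t)}\sg(s)\,q^{4st}y^{2s}z^{2t}$, and multiply each summand by $(1-x^2q^{4s+4t+4})/(1-x^2q^{4s+4t+4})$. The positive piece is exactly $F_1(q^2x;q)$; the negative piece rearranges to $(qx/yz)F_2(x;q)$ after extracting the scalar factor $x^2q^4/(xyzq^3)=qx/(yz)$ from the summand. This yields $F_1(q^2x;q) = (qx/yz)F_2(x;q) + \frac{J_4^3 j(y^2z^2;q^4)}{j(y^2;q^4)j(z^2;q^4)}$.

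For the second cross-relation, I change variables $s\mapsto s-1$, $t\mapsto t-1$ in $F_2(q^2x;q)$: the $q$-exponent collapses to $4st-1$, the denominator becomes $1-x^2q^{4s+4t}$, and an overall prefactor $qx/(yz)$ appears. Applying (\ref{equation:H2id1.15}) with $R=S=1$ converts the shifted-sign summation into $F_1(x;q)$ minus two single sums $\sum_t z^{2t}/(1-x^2q^{4t})$ and $\sum_s y^{2s}/(1-x^2q^{4s})$, which by (\ref{equation:kronecker-original}) at level $q^4$ evaluate to $\frac{J_4^3 j(x^2z^2;q^4)}{j(x^2;q^4)j(z^2;q^4)}$ and $\frac{J_4^3 j(x^2y^2;q^4)}{j(x^2;q^4)j(y^2;q^4)}$, respectively. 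Summing the two cross-relations gives $F(q^2x;q) = (qx/yz)F(x;q) + (\text{three theta terms})$, matching the claimed equation. The main obstacle is the bookkeeping in this second cross-relation: the boundary correction from (\ref{equation:H2id1.15}) must produce exactly the two single sums whose Kronecker evaluations match, with correct signs and $qx/(yz)$ prefactors, the two negative theta terms in the functional equation; any sign error or domain miscount would spoil the cancellation.
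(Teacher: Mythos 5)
Your proposal is correct and follows essentially the same route as the paper: the $G$-part is verified term by term with (\ref{equation:mxqz-fnq-z}), (\ref{equation:mxqz-fnq-x}), and (\ref{equation:j-elliptic}) exactly as in the paper, and your two cross-relations for $F$ are just a repackaging of the paper's single computation of $F(q^2x,y,z;q)-\frac{xq}{yz}F(x,y,z;q)$, which pairs the same terms (the two sums over the common denominator $1-x^2q^{4s+4t+4}$ telescoping against Kronecker's identity, and the shifted sum handled via (\ref{equation:H2id1.15}) and (\ref{equation:kronecker-original})). All the bookkeeping you flag as the main obstacle does check out as you describe it.
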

\begin{proof} 
For the function $F(x,y,z;q)$, we have
{\allowdisplaybreaks \begin{align*}
F&(q^2x,y,z;q)-\frac{xq}{yz}F(x,y,z;q)\\
&=\sum_{\sg(s)=\sg(t)}\sg(s)\frac{q^{4st}y^{2s}z^{2t}}{1-x^2q^{4s+4t+4}}
+q^2x\sum_{\sg(s)=\sg(t)}\sg(s)\frac{q^{4st+4s+4t+3}y^{2s+1}z^{2t+1}}{1-x^2q^{4s+4t+8}}\\
&\ \ \ \ \ -\frac{xq}{yz}\sum_{\sg(s)=\sg(t)}\sg(s)\frac{q^{4st}y^{2s}z^{2t}}{1-x^2q^{4s+4t}}
-\frac{x^2q}{yz}\sum_{\sg(s)=\sg(t)}\sg(s)\frac{q^{4st+4s+4t+3}y^{2s+1}z^{2t+1}}{1-x^2q^{4s+4t+4}}\\
&=\sum_{\sg(s)=\sg(t)}\sg(s)\frac{q^{4st}y^{2s}z^{2t}}{1-x^2q^{4s+4t+4}}
-\sum_{\sg(s)=\sg(t)}\sg(s)\frac{q^{4st}y^{2s}z^{2t}}{1-x^2q^{4s+4t+4}}\cdot x^2q^{4s+4t+4}\\
&\ \ \ \ \ +\frac{xq}{yz}\sum_{\sg(s)=\sg(t)}\sg(s)\frac{q^{4(s+1)(t+1)}y^{2(s+1)}z^{2(t+1)}}{1-x^2q^{4(s+1)+4(t+1)}}
 -\frac{xq}{yz}\sum_{\sg(s)=\sg(t)}\sg(s)\frac{q^{4st}y^{2s}z^{2t}}{1-x^2q^{4s+4t}}\\
 &=\sum_{\sg(s)=\sg(t)}\sg(s)q^{4st}y^{2s}z^{2t}
 -\frac{xq}{yz}\sum_{s\in\mathbb{Z}}\frac{y^{2s}}{1-x^2q^{4s}} -\frac{xq}{yz}\sum_{t\in\mathbb{Z}}\frac{z^{2t}}{1-x^2q^{4t}}\\
&=\frac{J_4^3j(y^2z^2;q^4)}{j(y^2;q^4)j(z^2;q^4)}- \frac{qx}{yz} \frac{J_4^3j(x^2y^2;q^4)}{j(x^2;q^4)j(y^2;q^4)} 
  -\frac{qx}{yz} \frac{J_4^3j(x^2z^2;q^4)}{j(x^2;q^4)j(z^2;q^4)},
\end{align*}}%
where in the last two lines we have used (\ref{equation:H2id1.15}) and (\ref{equation:kronecker-original}).  For $G(x,y,z;q),$
{\allowdisplaybreaks \begin{align*}
G&(q^2x,y,z;q)\\
&= \frac{J_4^3j(y^2z^2;q^4)}{j(y^2;q^4)j(z^2;q^4)}  m \big (-\frac{q^3x}{yz},q^{2},-yz \big ) 
+\frac{J_4^3j(q^4x^2z^2;q^4)}{j(q^4x^2;q^4)j(z^2;q^4)}  m \big (-\frac{qy}{q^2xz},q^{2},-q^2xz \big )\\
&\ \ \ \ \ +\frac{J_1^3J_2^3}{j(q^2x;q)j(y;q)j(z;q)} \frac{j(q^2xy;q^2)j(q^2xz;q^2)j(yz;q^2)}{j(-q^2x;q^2)j(-y;q^2)j(-z;q^2)}\\
&\ \ \ \ \  + \frac{J_4^3j(q^4x^2y^2;q^4)}{j(q^4x^2;q^4)j(y^2;q^4)}  m \big (-\frac{qz}{q^2xy},q^{2},-q^2xy \big ) \\
&=\frac{qx}{yz} G(x,y,z;q)+\frac{J_4^3j(y^2z^2;q^4)}{j(y^2;q^4)j(z^2;q^4)}  -\frac{qx}{yz} \frac{J_4^3j(x^2z^2;q^4)}{j(x^2;q^4)j(z^2;q^4)}- \frac{qx}{yz} \frac{J_4^3j(x^2y^2;q^4)}{j(x^2;q^4)j(y^2;q^4)},
\end{align*}}%
where we have used (\ref{equation:mxqz-fnq-z}), (\ref{equation:mxqz-fnq-x}), and (\ref{equation:j-elliptic}).
\end{proof}

We decompose the function
\begin{equation}
G(x,y,z;q)=G_1(x,y,z;q)+G_2(x,y,z;q),
\end{equation}
where
\begin{align}
G_1&(x,y,z;q):=\frac{j(yz;q^2)}{j(y^2;q^4)j(z^2;q^4)}
  \frac{J_4^4}{J_2^2}\sum_{k\in\mathbb{Z}}\frac{q^{k^2-k}y^kz^k}{1-q^{2k-1}x}\label{equation:G1-same-def}\\
&\ \ \ \ \    +\frac{j(xz;q^2)}{j(x^2;q^4)j(z^2;q^4)}   \frac{J_4^4}{J_2^2}\sum_{k\in\mathbb{Z}}\frac{q^{k^2-k}x^kz^k}{1-q^{2k-1}y}   + \frac{j(xy;q^2)}{j(x^2;q^4)j(y^2;q^4)} \frac{J_4^4}{J_2^2}\sum_{k\in\mathbb{Z}}\frac{q^{k^2-k}x^ky^k}{1-q^{2k-1}z}  \notag
\end{align}
and
\begin{align}
G_2(x,y,z;q)=\frac{J_1^3J_2^3}{j(x;q)j(y;q)j(z;q)} \frac{j(xy;q^2)j(xz;q^2)j(yz;q^2)}{j(-x;q^2)j(-y;q^2)j(-z;q^2)}.
\end{align}

For $x\ne 0$, potential singularities of the function
\begin{align}
H(x,y,z;q):=F(x,y,z;q)-G(x,y,z;q),
\end{align}
are limited to simple poles at $x=q^n$ and $x=-q^{2n}$ for $n\in \mathbb{Z}$.  The following series of lemmas demonstrate that the respective residues of any such poles always sum to zero.  Hence, $H(x,y,z;q)$ is analytic for $x\ne 0$, and one then proceeds as in Section \ref{section:proof}.

\begin{lemma} \label{lemma:F1-n-pos-same} For fixed $y,z\in\mathbb{C}^*$ where $|q|<|y|<1$ and $|q|<|z|<1$, the function $F(x,y,z;q)$ has simple poles at $x_0^2=q^{4n}$, where $n\in\mathbb{Z}$, with respective residues
\begin{equation}
\frac{1}{2} \sum_{s=1}^{2n-1}\frac{q^{s(2n-s)+2n}}{y^{s}z^{2n-s}}
\end{equation}
for $x_0=q^{2n}$, and 
\begin{equation}
-\frac{1}{2} \sum_{s=1}^{2n-1}\frac{(-1)^sq^{s(2n-s)+2n}}{y^{s}z^{2n-s}}
\end{equation}
for $x_0=-q^{2n}.$
\end{lemma}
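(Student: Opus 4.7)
The plan is to compute residues directly from the series expression for $F(x,y,z;q)$ at the start of Section \ref{section:same}, echoing the argument in Lemma \ref{lemma:F1-n}. Write $F=F_1+xF_2$, where $F_1$ (resp.\ $F_2$) is the first (resp.\ second) double sum. The factor $1-x^2q^{4s+4t}$ in $F_1$ vanishes exactly when $x^2=q^{-4(s+t)}$, so the summand indexed by $(s,t)$ contributes a simple pole at $x_0=\pm q^{2n}$ precisely when $s+t=-n$; similarly, the summands in $F_2$ contributing a pole at $x_0=\pm q^{2n}$ are those with $s+t=-n-1$. Without loss of generality I would take $n\ge 1$, the remaining cases being absorbed by the summation convention recalled in Section \ref{section:notation}. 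For $n\ge 1$ the sign-matching constraint forces both indices to be strictly negative, so after substituting $u=-s$, $v=-t$ only terms with $u,v\ge 1$ and $u+v=n$ (from $F_1$) or $u+v=n+1$ (from $F_2$) survive.

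Next, I would extract the local residue factor
\[
\lim_{x\to x_0}\frac{x-x_0}{1-x^2q^{-4n}}=\mp\frac{q^{2n}}{2}\quad\text{for }x_0=\pm q^{2n},
\]
multiply by the remaining constant part of each summand (and, in the $F_2$ contributions, by the value of the $x$-prefactor at $x_0$), and sum. At $x_0=q^{2n}$ the result is
\[
\frac{1}{2}\sum_{\substack{u+v=n\\ u,v\ge 1}}\frac{q^{4uv+2n}}{y^{2u}z^{2v}}+\frac{1}{2}\sum_{\substack{u+v=n+1\\ u,v\ge 1}}\frac{q^{4uv-1}}{y^{2u-1}z^{2v-1}}.
\]
Reindexing with $s=2u$ in the first sum (even $s$ in $[2,2n-2]$) and $s=2u-1$ in the second (odd $s$ in $[1,2n-1]$), and using the elementary identity $4uv-1=(s+1)(2n+1-s)-1=s(2n-s)+2n$ that holds when $v=n+1-u$, welds the two pieces into $\tfrac{1}{2}\sum_{s=1}^{2n-1}q^{s(2n-s)+2n}/(y^sz^{2n-s})$, the residue asserted by the lemma.

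At $x_0=-q^{2n}$ both the local residue factor and the $x$-prefactor in $xF_2$ change sign relative to $x_0=q^{2n}$. The $F_1$ contribution therefore acquires one minus sign, giving $(-1)^s$ for even $s$, while the $xF_2$ contribution acquires two signs whose product is $-1=-(-1)^s$ for odd $s$; the combined residue is $-\tfrac{1}{2}\sum_{s=1}^{2n-1}(-1)^sq^{s(2n-s)+2n}/(y^sz^{2n-s})$, as claimed. The main obstacle is careful tracking of the three independent sign sources (from $\sg(s)$, from the residue-factor sign at $\pm q^{2n}$, and from the $x$-prefactor on $F_2$), together with the exponent identity above that fuses the even- and odd-indexed pieces into a single clean sum.
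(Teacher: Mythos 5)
Your proposal is correct and follows essentially the same route as the paper: assume $n\ge 1$ without loss of generality, isolate the finitely many negatively-indexed terms with $s+t=-n$ (resp.\ $s+t=-n-1$) in the two double sums, negate the indices, extract the residue factor $\mp q^{2n}/2$ at $x_0=\pm q^{2n}$, and fuse the even- and odd-$s$ contributions into the single sum $\sum_{s=1}^{2n-1}$ via the exponent identity you state. The only blemish is the prose for $x_0=-q^{2n}$, where ``two signs whose product is $-1$'' should read that the two sign changes cancel (product $+1=-(-1)^s$ for odd $s$); your final formulas are nevertheless exactly those of the lemma.
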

\begin{proof}  Without loss of generality, we assume $n\ge 1$.  We have
{\allowdisplaybreaks \begin{align*}
&\lim_{x\rightarrow x_0}(x-x_0)F(x,y,z;q)\\
&=\lim_{x\rightarrow x_0}(x-x_0)\Big [ \sum_{\sg(s)=\sg(t)}\sg(s)\frac{q^{4st}y^{2s}z^{2t}}{1-x^2q^{4s+4t}}
+x\sum_{\sg(s)=\sg(t)}\sg(s)\frac{q^{4st+4s+4t+3}y^{2s+1}z^{2t+1}}{1-x^2q^{4s+4t+4}}\Big]\\
&=-\lim_{x\rightarrow x_0}(x-x_0)\Big [ \sum_{\substack{s+t=-n\\s,t<0}}\frac{q^{4st}y^{2s}z^{2t}}{1-x^2q^{4s+4t}}
+x\sum_{\substack{s+t+1=-n\\s,t<0}}\frac{q^{4st+4s+4t+3}y^{2s+1}z^{2t+1}}{1-x^2q^{4s+4t+4}}\Big]\\
&=-\lim_{x\rightarrow x_0}(x-x_0)\Big [ \sum_{\substack{s+t=n\\s,t\ge1}}\frac{q^{4st}y^{-2s}z^{-2t}}{1-x^2q^{-4n}}
 +x\sum_{\substack{s+t-1=n\\s,t\ge 1}}\frac{q^{4st-4s-4t+3}y^{-2s+1}z^{-2t+1}}{1-x^2q^{-4n}}\Big]\\
 &=\lim_{x\rightarrow x_0}(x-x_0)\Big [ \sum_{s=1}^{n-1}\frac{q^{4sn-4s^2}y^{-2s}z^{-(2n-2s)}q^{4n}}{(x-q^{2n})(x+q^{2n})}\\
&\ \ \ \ \ \ \ \ \ \ \ \ \ \ \   +x\sum_{s=1}^{n}\frac{q^{4sn-4s^2+4s-4n-1}y^{-2s+1}z^{-2n-2s-1}q^{4n}}{(x-q^{2n})(x+q^{2n})}\Big].
 \end{align*}}%
 For $x_0=q^{2n}$,
 {\allowdisplaybreaks \begin{align*}
\lim_{x\rightarrow x_0}(x-x_0)F(x,y,z;q)
  &=\frac{1}{2} \sum_{s=1}^{n-1}\frac{q^{4sn-4s^2+2n}}{y^{2s}z^{2n-2s}}
 +\frac{1}{2}\sum_{s=1}^{n}\frac{q^{4sn-4s^2+4s-1}}{y^{2s-1}z^{2n-2s+1}}\\
  &=\frac{1}{2} \sum_{s=1}^{n-1}\frac{q^{2s(2n-2s)+2n}}{y^{2s}z^{2n-2s}}
 +\frac{1}{2}\sum_{s=1}^{n}\frac{q^{(2s-1)2n-(2s-1)^2+2n}}{y^{2s-1}z^{2n-2s+1}}\\
&=\frac{1}{2} \sum_{s=1}^{2n-1}\frac{q^{s(2n-s)+2n}}{y^{s}z^{2n-s}}.
\end{align*}}%
The case $x_0=-q^{2n}$ is similar, so we omit it.
\end{proof}

\begin{lemma} \label{lemma:G1-n-odd-same} For fixed $y,z\in\mathbb{C}^*$ where $|q|<|y|<1$ and $|q|<|z|<1$, the function $G_1(x,y,z;q)$ has simple poles at $x_0=q^{2n+1}$, where $n\in\mathbb{Z}$, with respective residues
\begin{equation}
-\frac{j(yz;q^2)}{j(y^2;q^4)j(z^2;q^4)}\cdot \frac{J_4^4}{J_2^2}\cdot \frac{q^{n^2+3n+1}}{y^{n}z^{n}}.
\end{equation}
\end{lemma}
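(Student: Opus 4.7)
The plan is to observe that at an odd power of $q$ only one of the three summands in the decomposition \eqref{equation:G1-same-def} can contribute a pole, which reduces the claim to a one-line residue calculation. In more detail, I would first argue that the second and third summands of $G_1(x,y,z;q)$ are holomorphic in a neighborhood of $x_0 = q^{2n+1}$. Indeed, their only $x$-dependent singularities arise from the factors $1/j(x^2;q^4)$; by the product expansion of $j(\cdot;\cdot)$ these vanish precisely when $x^2$ is an integer power of $q^4$, i.e., when $x=\pm q^{2m}$ for some $m\in\mathbb{Z}$. Since $q^{2n+1}$ is not of this form, both summands are analytic at $x_0$, and the meromorphic continuations of their internal sums produce no additional $x$-singularities either (those sums depend on $x$ only polynomially in $x^k$, with $k$-dependent denominators in $y$ or $z$).

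Next I would localize the pole inside the first summand. The denominator $1-q^{2k-1}x$ of the $k$th term of $\sum_{k\in\mathbb{Z}} q^{k^2-k}y^k z^k/(1-q^{2k-1}x)$ vanishes at $x_0 = q^{2n+1}$ precisely when $2k-1 = -(2n+1)$, which isolates the unique index $k=-n$. Writing $1-q^{2k-1}x = -q^{-(2n+1)}(x - q^{2n+1})$ at $k=-n$, a direct computation gives
\begin{equation*}
\lim_{x\to q^{2n+1}}(x-q^{2n+1})\cdot\frac{q^{k^2-k}y^k z^k}{1-q^{2k-1}x}\bigg|_{k=-n}
= -q^{2n+1}\cdot q^{n^2+n}\cdot y^{-n}z^{-n} = -\frac{q^{n^2+3n+1}}{y^nz^n}.
\end{equation*}
Multiplication by the prefactor $\frac{j(yz;q^2)}{j(y^2;q^4)j(z^2;q^4)}\cdot\frac{J_4^4}{J_2^2}$ produces exactly the expression claimed in the lemma.

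No serious obstacle is anticipated here: unlike the analogous Lemma \ref{lemma:G1-n} in Section \ref{section:analytic}, where an even power $x_0 = q^{2m}$ forces poles in two of the three summands simultaneously and one must iterate the Appell--Lerch identity \eqref{equation:mxqz-induction} and invoke Proposition \ref{proposition:changing-z} to combine the contributions, here the odd power $x_0=q^{2n+1}$ cleanly isolates a single term of a single summand. The only small bookkeeping point will be confirming the regularity claim for the second and third summands; after that the residue extraction is immediate.
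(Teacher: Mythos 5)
Your proposal is correct and matches the paper's own (one-line) proof, which simply observes that the residue follows immediately from the definition \eqref{equation:G1-same-def}: at $x_0=q^{2n+1}$ only the first summand is singular, the pole comes from the single index $k=-n$, and the residue computation you give is exactly right. Your added justification that the other two summands are regular at odd powers of $q$ (their only $x$-poles sit at $\pm q^{2m}$ via $1/j(x^2;q^4)$) is the correct bookkeeping the paper leaves implicit.
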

\begin{proof}  This follows immediately from definition (\ref{equation:G1-same-def}).
\end{proof}

\begin{lemma} \label{lemma:G1-n-sq-same} For fixed $y,z\in\mathbb{C}^*$ where $|q|<|y|<1$ and $|q|<|z|<1$, the function $G_1(x,y,z;q)$ has simple poles at $x_0^2=q^{4n}$, where $n\in\mathbb{Z}$, with respective residues
\begin{equation}
\frac{1}{2}\sum_{s=1}^{2n-1}\frac{q^{s(2n-s)+2n}}{y^{s}z^{2n-s}}
+\frac{q^{n^2+2n}}{2y^nz^n}\cdot \frac{J_2^8}{J_1^2J_4^2}\cdot \frac{j(yz;q^2)}{j(qy;q^2)j(-y;q^2)j(-z;q^2)j(qz;q^2)}
\end{equation}
for $x_0=q^{2n}$, and 
\begin{equation}
-\frac{1}{2}\sum_{s=1}^{2n-1}\frac{(-1)^sq^{s(2n-s)+2n}}{y^{s}z^{2n-s}}
 - \frac{(-1)^nq^{n^2+2n}}{2z^{n}y^{n}}\cdot \frac{J_1^4}{J_2^2}\cdot \frac{j(yz;q^2)}{j(y;q)j(z;q)}
\end{equation}
for $x_0=-q^{2n}$.
\end{lemma}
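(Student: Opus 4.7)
The plan is to mirror the structure of the proof of Lemma \ref{lemma:G1-n}, adapted to the pole factor $1/j(x^2;q^4)$ that appears in the second and third summands of $G_1(x,y,z;q)$ as defined in (\ref{equation:G1-same-def}). The first summand has only odd-power poles at $x=q^{1-2k}$ and contributes nothing at $x_0=\pm q^{2n}$. For the remaining two summands I would invoke Proposition \ref{proposition:H1Thm1.3} with $\beta=1$, $b=2$, $m=4$ to compute
\[
\mathrm{Res}_{x=x_0}\frac{1}{j(x^2;q^4)}=\frac{(-1)^{n+1}q^{4\binom{n}{2}}x_0}{2J_4^3},
\]
valid at both $x_0=q^{2n}$ and $x_0=-q^{2n}$, and then carry the sign of $x_0$ through the subsequent simplifications.

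Next, I would rewrite each inner sum in $m$-function form via (\ref{equation:mdef-eq}) with $q\mapsto q^2$; for instance
\[
\sum_{k\in\mathbb{Z}}\frac{q^{k^2-k}(x_0 y)^k}{1-q^{2k-1}z}=j(-x_0 y;q^2)\,m\Bigl(-\frac{qz}{x_0 y},q^2,-x_0 y\Bigr),
\]
and use the identity $j(x_0 y;q^2)j(-x_0 y;q^2)=J_2\,j(x_0^2 y^2;q^4)/J_4$ (which is (\ref{equation:j-mod-dec}) with $q\mapsto q^2$) together with the elliptic transformation (\ref{equation:j-elliptic}) $j(q^{4n}y^2;q^4)=(-1)^n q^{-4\binom{n}{2}}y^{-2n}j(y^2;q^4)$. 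The two theta-function denominators $j(y^2;q^4)$ and $j(z^2;q^4)$ cancel, leaving a clean combined residue contribution of shape
\[
-\frac{x_0}{2J_2 y^{2n}}\,m\Bigl(-\frac{qz}{x_0 y},q^2,-x_0 y\Bigr)-\frac{x_0}{2J_2 z^{2n}}\,m\Bigl(-\frac{qy}{x_0 z},q^2,-x_0 z\Bigr).
\]

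To extract the claimed finite sum, I would specialize $x_0=\pm q^{2n}$ and, exactly as in the proof of Lemma \ref{lemma:G1-n}, apply (\ref{equation:mxqz-flip}) followed by (\ref{equation:mxqz-fnq-newz}) to transform each $m$-function into the form $m(q^{2(n-1)}\cdot\text{const},q^{2},\text{something})$. Invoking (\ref{equation:mxqz-induction}) with $q\mapsto q^2$ then peels off a finite sum of $n$ terms per contribution; after reindexing and adding the two symmetric pieces, the $q^{2n}$-case assembles into $\tfrac12\sum_{s=1}^{2n-1}q^{s(2n-s)+2n}/(y^s z^{2n-s})$, while the $-q^{2n}$-case picks up alternating signs from $(-1)^k$ in the induction, producing $-\tfrac12\sum_{s=1}^{2n-1}(-1)^s q^{s(2n-s)+2n}/(y^s z^{2n-s})$.

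What remains of each $m$-function after induction is a pair of residual $m$-terms that, after one more application of (\ref{equation:mxqz-flip}) and (\ref{equation:mxqz-fnq-x}) to align arguments, become precisely a difference to which (\ref{equation:change-n-even}) applies, collapsing to the theta quotient $j(yz;q^2)/[j(qy;q^2)j(-y;q^2)j(-z;q^2)j(qz;q^2)]$ in the $x_0=q^{2n}$ case (with factor $J_2^8/(J_1^2 J_4^2)$ produced by the product rearrangements $\overline{J}_{0,2}=2J_4^2/J_2$ and $\overline{J}_{1,2}=J_2^5/(J_1^2 J_4^2)$). For $x_0=-q^{2n}$ the analogous maneuvre is more direct: the residual $m$-functions simplify via (\ref{equation:mxqz-fnq-newz}) alone, and the product rearrangement $j(y^2;q^2)=J_2 j(y;q)j(-y;q)/J_1$ (i.e.\ (\ref{equation:j-mod-dec})) converts the resulting theta quotient into $J_1^4/J_2^2\cdot j(yz;q^2)/[j(y;q)j(z;q)]$, carrying the sign $(-1)^n$ from $j(-q^{2n}\cdot;q^2)$ via (\ref{equation:j-elliptic}). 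The main obstacle is the bookkeeping of signs and $q$-powers in the induction step, especially for $x_0=-q^{2n}$, where the $(-1)^k$ factors in the geometric expansion must be tracked through every use of (\ref{equation:mxqz-flip}) and (\ref{equation:mxqz-induction}) to reach the claimed alternating sum and signed theta quotient.
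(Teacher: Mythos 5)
Your proposal follows essentially the same route as the paper's proof: Proposition \ref{proposition:H1Thm1.3} for the residue of $1/j(x^2;q^4)$, conversion of the bilateral sums to $m$-functions via (\ref{equation:mdef-eq}), simplification with (\ref{equation:j-elliptic}) and (\ref{equation:j-mod-dec}), then (\ref{equation:mxqz-flip}), (\ref{equation:mxqz-induction}), (\ref{equation:mxqz-fnq-newz}), (\ref{equation:mxqz-fnq-x}), and finally (\ref{equation:change-n-even}) with the stated product rearrangements; the paper likewise works out only $x_0=q^{2n}$ in detail and omits $x_0=-q^{2n}$ as similar. The only slip is the stray $J_2$ in your intermediate expression $-x_0\,m(\cdot)/(2J_2y^{2n})$, which disappears once one uses the correct form $j(x;q^2)j(-x;q^2)=J_2^2\,j(x^2;q^4)/J_4$ of (\ref{equation:j-mod-dec}).
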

\begin{proof}   Beginning with Proposition \ref{proposition:H1Thm1.3} where $x_0=q^{2n}$
{\allowdisplaybreaks \begin{align*}
\lim_{x\rightarrow x_0}&(x-x_0)G_1(x,y,z;q)\\
&=\lim_{x\rightarrow x_0}(x-x_0)\frac{1}{j(x^2;q^4)}\Big [ \frac{j(xz;q^2)}{j(z^2;q^4)}   \frac{J_4^4}{J_2^2}\sum_{k\in\mathbb{Z}}\frac{q^{k^2-k}x^kz^k}{1-q^{2k-1}y} 
   + \frac{j(xy;q^2)}{j(y^2;q^4)} \frac{J_4^4}{J_2^2}\sum_{k\in\mathbb{Z}}\frac{q^{k^2-k}x^ky^k}{1-q^{2k-1}z}  \Big ]\\
&=\frac{(-1)^{n+1}q^{4\binom{n}{2}}q^{2n}}{2J_{4}^3} \frac{J_4^4}{J_2^2}\Big [ \frac{j(q^{2n}z;q^2)}{j(z^2;q^4)}   \sum_{k\in\mathbb{Z}}\frac{q^{k^2-k}(q^{2n}z)^k}{1-q^{2k-1}y} 
   + \frac{j(q^{2n}y;q^2)}{j(y^2;q^4)} \sum_{k\in\mathbb{Z}}\frac{q^{k^2-k}(q^{2n}y)^k}{1-q^{2k-1}z}  \Big ]\\
&=\frac{(-1)^{n+1}q^{4\binom{n}{2}}q^{2n}}{2J_{4}^3} \frac{J_4^4}{J_2^2}\Big [ \frac{j(q^{2n}z;q^2)}{j(z^2;q^4)}   
 j(-q^{2n}z;q^2) m\big (-\frac{qy}{q^{2n}z} ,q^{2},-q^{2n}z\big ) \\
&\ \ \ \ \    + \frac{j(q^{2n}y;q^2)}{j(y^2;q^4)}  j(-q^{2n}y;q^2) m\big (-\frac{qz}{q^{2n}y} ,q^{2},-q^{2n}y\big )   \Big ]\\
&=-\frac{q^{2n}}{2} 
\Big [ \frac{1}{z^{2n}} m\big (-\frac{qy}{q^{2n}z} ,q^{2},-z\big )    + \frac{1}{y^{2n}}  m\big (-\frac{qz}{q^{2n}y} ,q^{2},-y\big )   \Big ],
\end{align*}}%
where the last two equalities follow from (\ref{equation:mdef-eq}), (\ref{equation:j-elliptic}), and (\ref{equation:j-mod-dec}).  We note
 {\allowdisplaybreaks \begin{align*}
&\lim_{x\rightarrow x_0}(x-x_0)G_1(x,y,z;q)\\
&= \frac{q^{4n}z}{2qyz^{2n}} m\big (-\frac{q^{2n}z}{qy} ,q^{2},-\frac{1}{z}\big )   
 +  \frac{q^{4n}y}{2qzy^{2n}}  m\big (-\frac{q^{2n}y}{qz} ,q^{2},-\frac{1}{y}\big ) &\textup{(by (\ref{equation:mxqz-flip})})\\
 &= \frac{q^{4n}z}{2qyz^{2n}} 
 \Big ( \sum_{k=0}^{n-1}q^{2k(n-1)-2\binom{k}{2}}(\frac{z}{qy})^k+q^{2\binom{n}{2}}(\frac{z}{qy})^n m\big (-\frac{z}{qy} ,q^{2},-\frac{1}{z}\big ) \Big) &\textup{(by (\ref{equation:mxqz-induction})}) \\
&\ \ \ \ \  +  \frac{q^{4n}y}{2qzy^{2n}}
 \Big ( \sum_{k=0}^{n-1}q^{2k(n-1)-2\binom{k}{2}}(\frac{y}{qz})^k+q^{2\binom{n}{2}}(\frac{y}{qz})^n m\big (-\frac{y}{qz} ,q^{2},-\frac{1}{y}\big ) \Big)   \\
 &= \sum_{k=0}^{n-1}\frac{q^{2kn-(k+1)^2+4n}}{2y^{k+1}z^{2n-k-1}}+\frac{z}{qy}\frac{q^{n^2+2n}}{2y^{n}z^{n}} m\big (-\frac{z}{qy} ,q^{2},-\frac{1}{z}\big )  \\
&\ \ \ \ \  +  \sum_{k=0}^{n-1}\frac{q^{2kn-(k+1)^2+4n}}{2z^{k+1}y^{2n-k-1}}
+\frac{y}{zq}\frac{q^{n^2+2n}}{2z^{n}y^{n}}m\big (-\frac{y}{qz} ,q^{2},-\frac{1}{y}\big ) \\
&= \sum_{k=1}^{n}\frac{q^{2kn-k^2+2n}}{2y^{k}z^{2n-k}}+\frac{z}{qy}\frac{q^{n^2+2n}}{2y^{n}z^{n}} m\big (-\frac{z}{qy} ,q^{2},qy\big ) 
&\textup{(by (\ref{equation:mxqz-fnq-newz}))} \\
&\ \ \ \ \  +  \sum_{k=1}^{n}\frac{q^{2kn-k^2+2n}}{2z^{k}y^{2n-k}}
-\frac{q^{n^2+2n}}{2z^{n}y^{n}}m\big (-\frac{qz}{y} ,q^{2},- y\big ) &\textup{(by (\ref{equation:mxqz-flip})}) \\
&= \sum_{k=1}^{n}\frac{q^{2kn-k^2+2n}}{2y^{k}z^{2n-k}}+\frac{z}{qy}\frac{q^{n^2+2n}}{2y^{n}z^{n}} m\big (-\frac{z}{qy} ,q^{2},qy\big )  \\
&\ \ \ \ \  +  \sum_{k=1}^{n}\frac{q^{2kn-k^2+2n}}{2z^{k}y^{2n-k}}
-\frac{q^{n^2+2n}}{2z^{n}y^{n}}-\frac{q^{n^2+2n}}{2z^{n}y^{n}}\frac{z}{qy}m\big (-\frac{qz}{y} ,q^{2},- y\big )
&\textup{(by (\ref{equation:mxqz-fnq-x}))} \\
&=\frac{1}{2}\sum_{s=1}^{2n-1}\frac{q^{s(2n-s)+2n}}{y^{2}z^{2n-s}}
+\frac{z}{qy}\frac{q^{n^2+2n}}{2y^{n}z^{n}} \Big [ m\big (-\frac{z}{qy} ,q^{2},qy\big ) -m\big (-\frac{qz}{y} ,q^{2},- y\big ) \Big],
\end{align*}}%
and the result follows from (\ref{equation:change-n-even}) and the product rearrangement $\overline{J}_{1,2}=J_2^5/J_1^2J_4^2$.  The argument for the case $x_0=-q^{2n}$ is similar, so we omit it.
\end{proof}

\begin{lemma} \label{lemma:G2-n-neg-same} For fixed $y,z\in\mathbb{C}^*$ where $|q|<|y|<1$ and $|q|<|z|<1$, the function $G_2(x,y,z;q)$ has simple poles at $x_0=-q^{2n}$, where $n\in\mathbb{Z}$, with respective residues
\begin{equation}
\frac{(-1)^nq^{n^2+2n}}{y^nz^n}\cdot \frac{J_1^4}{J_2^2}\cdot \frac{j(yz;q^2)}{j(y;q)j(z;q)}.
\end{equation}
\end{lemma}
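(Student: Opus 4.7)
The pole at $x_0=-q^{2n}$ of $G_2(x,y,z;q)$ arises solely from the vanishing of $j(-x;q^2)$ in the denominator, since every other theta factor in $G_2$ is holomorphic and nonzero at $x_0$. Consequently, the residue factorizes as the residue of $1/j(-x;q^2)$ at $x_0$ multiplied by the value of the remaining factors at $x_0$. This is directly parallel to the proof of Lemma~\ref{lemma:G2-minus2n}, whose template I would follow almost verbatim (the only structural difference is the absence of the $-2$ prefactor here).

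Concretely, I would first apply Proposition~\ref{proposition:H1Thm1.3} with $\beta=-1$, $b=1$, $m=2$, and $k=n$ to obtain
\begin{equation*}
\lim_{x\to -q^{2n}}\frac{x+q^{2n}}{j(-x;q^2)} \;=\; \frac{(-1)^n q^{n^2+n}}{J_2^3}.
\end{equation*}
Next, the elliptic transformation (\ref{equation:j-elliptic}) in base $q^2$ gives the simplifications
\begin{equation*}
j(-q^{2n}y;q^2) = q^{n-n^2}\,y^{-n}\,j(-y;q^2), \qquad j(-q^{2n}z;q^2) = q^{n-n^2}\,z^{-n}\,j(-z;q^2),
\end{equation*}
so that the $j(-y;q^2)$ and $j(-z;q^2)$ in the denominator of $G_2$ cancel cleanly against the numerator. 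An analogous application of (\ref{equation:j-elliptic}) in base $q$ produces $j(-q^{2n};q)=q^{n-2n^2}\overline{J}_{0,1}$, and the product rearrangement $\overline{J}_{0,1}=2J_2^2/J_1$ converts this into a pure $J$-product.

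Finally, I would assemble all four contributions and consolidate the $q$-exponents: $q^{n^2+n}$ from the residue of $1/j(-x;q^2)$, $q^{2n-2n^2}$ from the two $q^2$-elliptic shifts in the numerator, and $q^{2n^2-n}$ from the reciprocal of the $q$-elliptic shift of $j(-q^{2n};q)$; these combine to the net exponent $n^2+2n$. After cancellation of the $J_2^3$ factors, the product of $y^{-n}z^{-n}$, $J_1^4/J_2^2$, and the remaining $j(yz;q^2)/[j(y;q)j(z;q)]$ falls out, giving the stated residue. The only potential obstacle is the bookkeeping of signs, $q$-exponents, and $J$-factor cancellations, but these are entirely routine and mirror the analysis already carried out in Lemma~\ref{lemma:G2-minus2n}; no new ingredient beyond Proposition~\ref{proposition:H1Thm1.3}, identity (\ref{equation:j-elliptic}), and the $\overline{J}_{0,1}$ product rearrangement is required.
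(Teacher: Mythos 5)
Your method is exactly the paper's: isolate the vanishing factor $j(-x;q^2)$, apply Proposition \ref{proposition:H1Thm1.3}, shift the remaining theta factors with (\ref{equation:j-elliptic}), and finish with the rearrangement $\overline{J}_{0,1}=2J_2^2/J_1$. Every intermediate formula you write is correct, including the residue $(-1)^nq^{n^2+n}/J_2^3$ of $1/j(-x;q^2)$, the two $q^2$-elliptic shifts, and the net exponent $n^2+2n$.

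The one slip is in the final assembly: the factor $j(-q^{2n};q)=q^{n-2n^2}\,\overline{J}_{0,1}=q^{n-2n^2}\cdot 2J_2^2/J_1$ sits in the \emph{denominator} of $G_2$, so its $2$ survives into the answer. Collecting the $J$-factors gives
\begin{equation*}
\frac{1}{J_2^3}\cdot J_1^3J_2^3\cdot\frac{J_1}{2J_2^2}=\frac{J_1^4}{2J_2^2},
\end{equation*}
not $J_1^4/J_2^2$. The residue is therefore
\begin{equation*}
\frac{(-1)^nq^{n^2+2n}}{y^nz^n}\cdot\frac{J_1^4}{2J_2^2}\cdot\frac{j(yz;q^2)}{j(y;q)j(z;q)},
\end{equation*}
which is what the last line of the paper's own computation produces, and which is exactly what is needed for the residues at $x_0=-q^{2n}$ of $F$, $G_1$, and $G_2$ (Lemmas \ref{lemma:F1-n-pos-same} and \ref{lemma:G1-n-sq-same}) to sum to zero. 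In other words, the displayed statement of the lemma appears to be missing the $1/2$ (a typo in the paper, since its own proof ends with $J_1^4/2J_2^2$), and your write-up reproduces the typo rather than the value your own intermediate steps force. This does not affect the soundness of your approach, but you should carry the $2$ through and flag the discrepancy with the stated residue rather than claim to have reached it.
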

\begin{proof}  Beginning with Proposition \ref{proposition:H1Thm1.3},
{\allowdisplaybreaks \begin{align*}
\lim_{x\rightarrow x_0}&(x-x_0)G_2(x,y,z;q)\\
&=\lim_{x\rightarrow x_0}(x-x_0)\frac{J_1^3J_2^3}{j(x;q)j(y;q)j(z;q)} \frac{j(xy;q^2)j(xz;q^2)j(yz;q^2)}{j(-x;q^2)j(-y;q^2)j(-z;q^2)}\\
&=-\frac{(-1)^{n+1}q^{2\binom{n}{2}}q^{2n}}{J_2^3} \frac{J_1^3J_2^3}{j(-q^{2n};q)j(y;q)j(z;q)} \frac{j(-q^{2n}y;q^2)j(-q^{2n}z;q^2)j(yz;q^2)}{j(-y;q^2)j(-z;q^2)}\\
&=\frac{(-1)^nq^{n^2+2n}}{y^nz^n} \frac{J_1^4}{2J_2^2} \frac{j(yz;q^2)}{j(y;q)j(z;q)}, 
\end{align*}}%
where we have used (\ref{equation:j-elliptic}), (\ref{equation:j-mod-2}), and the fact that $j(-1;q)=2J_2^2/J_1$.
\end{proof}

\begin{lemma} \label{lemma:G2-n-same} For fixed $y,z\in\mathbb{C}^*$ where $|q|<|y|<1$ and $|q|<|z|<1$, the function $G_2(x,y,z;q)$ has simple poles at $x_0=q^{n}$, where $n\in\mathbb{Z}$, with respective residues
\begin{equation}
- \frac{q^{n^2+2n}}{2y^nz^n}\cdot\frac{J_2^8}{J_1^2J_4^2}\cdot
 \frac{j(yz;q^2)}{j(qy;q^2)j(qz;q^2)j(-y;q^2)j(-z;q^2)}
\end{equation}
for $x_0=q^{2n}$, and 
\begin{equation}
\frac{j(yz;q^2)}{j(y^2;q^4)j(z^2;q^4)} \cdot \frac{J_4^4}{J_2^2}\cdot \frac{q^{n^2+3n+1}}{y^nz^n}
\end{equation}
for $x_0=q^{2n+1}$.
\end{lemma}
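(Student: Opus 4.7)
\textbf{Proposal for the proof of Lemma \ref{lemma:G2-n-same}.}  The strategy is to mimic the proof of Lemma \ref{lemma:G2-n} from Section \ref{section:analytic}:  the only factor of $G_2(x,y,z;q)$ whose zeros occur at $x=q^n$ is $j(x;q)$ in the denominator, so Proposition \ref{proposition:H1Thm1.3} (with $\beta=1$, $b=1$, $m=1$) isolates the pole at once, giving
$$\lim_{x\to q^n}(x-q^n)G_2(x,y,z;q)=\frac{(-1)^{n+1}q^{\binom{n+1}{2}}J_2^3\,j(yz;q^2)}{j(y;q)j(z;q)j(-y;q^2)j(-z;q^2)}\cdot\frac{j(q^ny;q^2)j(q^nz;q^2)}{j(-q^n;q^2)}.$$
All subsequent work is purely algebraic:  simplify the three $x$-dependent theta factors on the right, cancel the $y$- and $z$-denominators against the $q\to q^2$ splittings of $j(y;q),j(z;q)$, and then rearrange the residual $J$-products to the two advertised forms.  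I would split into the parity cases.

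For $n=2m$ I would write $q^{2m}=(q^{2})^{m}$ and apply (\ref{equation:j-elliptic}) with base $q^{2}$ to each of $j(q^{2m}y;q^{2})$, $j(q^{2m}z;q^{2})$, and $j(-q^{2m};q^{2})$; this produces $j(y;q^{2})$, $j(z;q^{2})$, $j(-1;q^{2})$, a factor of $(yz)^{-m}$, and a power of $q$ that combines with $\binom{2m+1}{2}$ to $q^{m^{2}+2m}$.  Next (\ref{equation:j-mod-2}) rewrites $j(y;q)j(z;q)$ as $J_{1}^{2}j(y;q^{2})j(qy;q^{2})j(z;q^{2})j(qz;q^{2})/J_{2}^{4}$, cancelling the $j(y;q^{2})j(z;q^{2})$ just produced and introducing $j(qy;q^{2})j(qz;q^{2})$ in the denominator.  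Substituting the product rearrangement $j(-1;q^{2})=\overline{J}_{0,2}=2J_{4}^{2}/J_{2}$ then yields the combined $J$-factor $J_{2}^{8}/(J_{1}^{2}J_{4}^{2})$ of the first claimed residue.

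For $n=2m+1$ the same mechanism applies, now with $q^{2m+1}=(q^{2})^{m}\cdot q$, so that (\ref{equation:j-elliptic}) instead produces $j(qy;q^{2})$, $j(qz;q^{2})$, and $j(-q;q^{2})$, together with the power $q^{m^{2}+3m+1}$.  Using (\ref{equation:j-mod-2}) I would cancel $j(qy;q^{2})j(qz;q^{2})$ against $j(y;q)j(z;q)$, after which the denominator contains $j(y;q^{2})j(-y;q^{2})j(z;q^{2})j(-z;q^{2})$; applying (\ref{equation:j-mod-dec}) with $q\to q^{2}$ consolidates this into $J_{2}^{2}j(y^{2};q^{4})j(z^{2};q^{4})/J_{4}^{2}$, putting the answer into the shape of the claimed residue.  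The final step is to collapse the remaining $J$-ratio using $\overline{J}_{1,2}=J_{2}^{5}/(J_{1}^{2}J_{4}^{2})$, which produces the $J_{4}^{4}/J_{2}^{2}$ factor.

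No new ideas are needed beyond those already assembled in Section \ref{section:notation} and Lemma \ref{lemma:G2-n}; the main obstacle is therefore not conceptual but combinatorial — keeping track of the $q$-exponents coming out of $\binom{n+1}{2}$ and the repeated uses of (\ref{equation:j-elliptic}), and orchestrating the correct cascade of modular identities so that everything reduces to the stated $J_{2},J_{4}$ products.  The payoff is that, together with Lemmas \ref{lemma:F1-n-pos-same}, \ref{lemma:G1-n-odd-same}, and \ref{lemma:G1-n-sq-same}, the residues of $F$, $G_{1}$, and $G_{2}$ at every potential pole $x_{0}=q^{n}$ cancel, so that $H(x,y,z;q)$ is analytic for $x\neq 0$ and the proof of Theorem \ref{theorem:same-parity} can then proceed exactly as in Section \ref{section:proof}.
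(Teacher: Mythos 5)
Your proposal is correct and takes essentially the paper's route: isolate the simple pole coming from the single factor $1/j(x;q)$ via Proposition \ref{proposition:H1Thm1.3}, then reduce the resulting theta quotient with (\ref{equation:j-elliptic}), (\ref{equation:j-mod-2}), (\ref{equation:j-mod-dec}) and the product rearrangements $\overline{J}_{0,2}=2J_4^2/J_2$, $\overline{J}_{1,2}=J_2^5/J_1^2J_4^2$; the paper's only cosmetic difference is that it applies (\ref{equation:j-mod-2}) to $j(x;q)$ \emph{before} invoking the residue formula, so that Proposition \ref{proposition:H1Thm1.3} is used with modulus $q^2$ and $J_2^3$ in place of $q$ and $J_1^3$. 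One bookkeeping caveat in your odd case: identity (\ref{equation:j-mod-dec}) should read $j(x^2;q^2)=J_2\,j(x;q)j(-x;q)/J_1^{2}$ (the exponent printed in the paper is a misprint), so the consolidation is $j(y;q^2)j(-y;q^2)j(z;q^2)j(-z;q^2)=J_2^{4}\,j(y^2;q^4)j(z^2;q^4)/J_4^{2}$ rather than your $J_2^{2}\,j(y^2;q^4)j(z^2;q^4)/J_4^{2}$; with that correction the remaining $J$-ratio collapses exactly to the stated $J_4^4/J_2^2$, whereas carrying your intermediate form through literally would leave $J_4^4$ instead.
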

\begin{proof}  Beginning with Proposition \ref{proposition:H1Thm1.3}, where $x_0=q^{2n}$
{\allowdisplaybreaks \begin{align*}
&\lim_{x\rightarrow x_0}(x-x_0)G_2(x,y,z;q)\\
&=\lim_{x\rightarrow x_0}(x-x_0)\frac{J_2^2}{J_1} \frac{J_1^3J_2^3}{j(x;q^2)j(xq;q^2)j(y;q)j(z;q)} \frac{j(xy;q^2)j(xz;q^2)j(yz;q^2)}{j(-x;q^2)j(-y;q^2)j(-z;q^2)}&(\textup{by }(\ref{equation:j-mod-2}))\\
&=\frac{J_2^2}{J_1} \frac{(-1)^{n+1}q^{2\binom{n}{2}}q^{2n}}{J_2^3}\frac{J_1^3J_2^3}{j(q^{2n+1};q^2)j(y;q)j(z;q)} \frac{j(q^{2n}y;q^2)j(q^{2n}z;q^2)j(yz;q^2)}{j(-q^{2n};q^2)j(-y;q^2)j(-z;q^2)}\\
&= -  \frac{q^{n^2+2n}}{y^nz^n}
 \frac{J_1^2J_2^2}{j(q;q^2)j(y;q)j(z;q)} \frac{j(y;q^2)j(z;q^2)j(yz;q^2)}{j(-1;q^2)j(-y;q^2)j(-z;q^2)}\\
 &=  -  \frac{q^{n^2+2n}}{y^nz^n}\frac{J_2^6}{j(q;q^2)j(-1;q^2)}
 \frac{j(yz;q^2)}{j(qy;q^2)j(qz;q^2)j(-y;q^2)j(-z;q^2)},
\end{align*}}%
where in the last two equalities we have used  (\ref{equation:j-elliptic}) and (\ref{equation:j-mod-2}).  The result then follows from (\ref{equation:j-mod-2}) and the product rearrangements $J_{1,2}=J_1^2/J_2$, $\overline{J}_{0,2}=2J_4^2/J_2$, and $\overline{J}_{1,2}=J_2^5/J_1^2J_4^2$. The argument for the case $x_0=q^{2n+1}$ is similar, so we omit it.
\end{proof}

\section{Proof of Theorem \ref{theorem:different-parity}}\label{section:different}

We again redefine $F(x,y,z;q)$, $G(x,y,z;q)$, $G_1(x,y,z;q)$, $G_2(x,y,z;q)$, and $H(x,y,z;q)$, but this time for the purpose of proving Theorem \ref{theorem:different-parity}.  We define
\begin{align}
F(x,y,z;q)&:=z\sum_{\sg(s)=\sg(t)}\sg(s)\frac{q^{4st+2s}y^{2s}z^{2t}}{1-x^2q^{4s+4t+2}}
+xyq\sum_{\sg(s)=\sg(t)}\sg(s)\frac{q^{4st+2s+4t}y^{2s}z^{2t}}{1-x^2q^{4s+4t+2}}
\end{align}
and
\begin{align}
G&(x,y,q)\\
&: =  \frac{zJ_4^3j(q^2y^2z^2;q^4)}{j(q^2y^2;q^4)j(z^2;q^4)}  m \big (-\frac{qx}{yz},q^{2},-qyz \big ) 
+  \frac{zJ_4^3j(q^2x^2z^2;q^4)}{j(q^2x^2;q^4)j(z^2;q^4)} m \big (-\frac{qy}{xz},q^{2}, -qxz\big ) \notag \\
&\ \ \ \ \ - \frac{zJ_1^3J_2^3}{j(x;q)j(y;q)j(z;q)}\frac{j(xy;q^2)j(qxz;q^2)j(qyz;q^2)}{j(-xq;q^2)j(-yq;q^2)j(-z;q^2)}\notag \\
&\ \ \ \ \   - \frac{q}{xy} \frac{J_4^3j(x^2y^2;q^4)}{j(q^2x^2;q^4)j(q^2y^2;q^4)}  m \big (-\frac{qz}{xy},q^{2},-xy \big ).\notag 
\end{align}
and prove a stronger theorem, which gives as a corollary Theorem \ref{theorem:different-parity}.
\begin{theorem} For  $x,y,z\in \mathbb{C}^*$ where $|q|<|y|<1$,  $|q|<|z|<1$, and $x$ neither zero or of the form $x=\pm q^{2n+1}$, where $n\in\mathbb{Z}$, we have
\begin{equation}
F(x,y,z;q)=G(x,y,z;q).
\end{equation}
\end{theorem}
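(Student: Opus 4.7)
My approach mirrors the proofs of Theorems \ref{theorem:result} and \ref{theorem:same-parity}. The strategy has three phases: (1) establish a common functional equation in $x$ for $F$ and $G$, (2) verify that the difference $H(x,y,z;q) := F(x,y,z;q) - G(x,y,z;q)$ is analytic for $x\ne 0$ by checking that residues at all potential poles cancel, and (3) exploit the functional equation together with analyticity to force $H\equiv 0$ via a Laurent-series argument.

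\textbf{Step 1 (Functional equation).} By analogy with Propositions \ref{proposition:prop-functional} and \ref{proposition:prop-functional-same}, I expect both $F$ and $G$ to satisfy a shared functional equation of the form
\begin{equation*}
M(q^2 x, y, z; q) = \frac{xq}{yz}\, M(x, y, z; q) + \text{(theta-quotient correction terms)},
\end{equation*}
where the correction terms are read off the right-hand side of (\ref{equation:parity-dif}). For $F$, I would perform the shift $x\mapsto q^2 x$ on each summand, rewrite the denominator $1-x^2 q^{4s+4t+6}$ as a reindexed copy of $1-x^2 q^{4s'+4t'+2}$, and use identity (\ref{equation:H2id1.15}) to split off one-dimensional boundary sums, which are then evaluated via Kronecker's (\ref{equation:kronecker-original}). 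For $G$, the same functional equation follows from the Appell--Lerch shift rules (\ref{equation:mxqz-fnq-z}), (\ref{equation:mxqz-fnq-x}) and theta ellipticity (\ref{equation:j-elliptic}).

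\textbf{Step 2 (Analyticity of $H$).} I would decompose $G = G_1 + G_2$, with $G_1$ collecting the three Appell--Lerch summands (rewritten as $k$-sums via (\ref{equation:mdef-eq})) and $G_2$ the pure theta-quotient piece. The potential poles of $H$ for $x\ne 0$ all lie at $x_0 = \pm q^{2n+1}$: odd powers arise from $1-x^2 q^{4s+4t+2}$ in $F$, from $j(q^2 x^2;q^4)$ in $G_1$, and from $j(x;q)$ and $j(-xq;q^2)$ in $G_2$. For each class of pole I would produce lemmas patterned on Lemmas \ref{lemma:F1-n-pos-same}--\ref{lemma:G2-n-same}, expressing residues explicitly. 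The $F$-residues follow by direct evaluation; the $G_1$-residues combine Proposition \ref{proposition:H1Thm1.3} with the $m$-identities (\ref{equation:mxqz-flip}), (\ref{equation:mxqz-fnq-newz}), (\ref{equation:mxqz-induction}) and the specialisation (\ref{equation:change-n-even}) to telescope the $m$-combinations down to theta quotients; the $G_2$-residues use Proposition \ref{proposition:H1Thm1.3} plus product rearrangements such as $\overline{J}_{0,2}=2J_4^2/J_2$ and $\overline{J}_{1,2}=J_2^5/J_1^2 J_4^2$. The crucial check is that the three contributions sum to zero at each $x_0$.

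\textbf{Step 3 (Conclusion) and main obstacle.} With $H$ analytic for $x\ne 0$, expanding $H = \sum_m C_m x^m$ and substituting into the Step~1 functional equation gives the recursion $C_{m+1} = (yz)^{-1} q^{-1-2m} C_m$, hence $C_k = y^{-k}z^{-k}q^{-k^2}C_0$; the super-Gaussian factor $q^{-k^2}$ makes $\sum_k y^{-k}z^{-k}q^{-k^2}x^k$ divergent for every $x\ne 0$, forcing $C_0=0$ and $H\equiv 0$. The decisive difficulty is the residue bookkeeping of Step 2. Unlike in Section \ref{section:same}, the three Appell--Lerch summands of $G$ now enter \emph{asymmetrically} — with outer coefficients $z$, $z$, and $-q/(xy)$ — and the theta arguments carry extra shifts (for example $q^2 y^2$, $q^2 x^2$ inside denominators and $-qyz$ inside $m$). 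Tracking the resulting powers of $q$, $x$, $y$, $z$ through the $m$-induction (\ref{equation:mxqz-induction}) and through the change-of-$z$ identity (\ref{equation:change-n-even}) — quite possibly needing also its odd-$n$ analog — is where the bulk of the calculation will live before everything collapses into matching residues.
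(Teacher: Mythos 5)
Your strategy is the paper's own (shared functional equation, residue cancellation to show $H=F-G$ is analytic on $\mathbb{C}^*$, then the Laurent-coefficient recursion $C_k=y^{-k}z^{-k}q^{-k^2}C_0$ forcing $H\equiv 0$), and Steps 1 and 3 are fine. But Step 2 contains a concrete error in the enumeration of potential poles. You assert that all potential poles of $H$ lie at $x_0=\pm q^{2n+1}$. That is false: the first Appell--Lerch summand of $G$, with $m\big(-\tfrac{qx}{yz},q^2,-qyz\big)$ expanded via (\ref{equation:mdef-eq}), has denominators of the form $1-q^{2k}x$ and hence simple poles at every \emph{even} power $x_0=q^{2n}$; likewise the factor $j(x;q)$ in the denominator of the pure theta-quotient term vanishes at \emph{all} $x_0=q^{n}$, even $n$ included. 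Since $F$ is regular at $x_0=q^{2n}$ (its denominators $1-x^2q^{4s+4t+2}$ vanish only at odd powers of $q$, consistent with the theorem excluding only $x=\pm q^{2n+1}$), these even-power poles must cancel \emph{internally} between $G_1$ and $G_2$; this is exactly the content of the paper's Lemma \ref{lemma:G1-n-diff} and the $x_0=q^{2n}$ case of Lemma \ref{lemma:G2-n-diff}, whose residues are equal and opposite. As written, your plan never verifies analyticity of $H$ at $x_0=q^{2n}$, and without analyticity on all of $\mathbb{C}^*$ the Laurent-series argument of Step 3 cannot be launched. The omission is repairable by adding that one residue computation, and the remainder of your outline (the $F$ and $G_1$ residues at $x_0=\pm q^{2n+1}$ via (\ref{equation:mxqz-induction}) and (\ref{equation:change-n-even}), the $G_2$ residues via Proposition \ref{proposition:H1Thm1.3} and product rearrangements) matches the paper's proof.
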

In particular, if we further restrict $q<|x|<1$, then the geometric series yields
\begin{align}
F(x,y,z;q)=\sum_{\substack{\sg(r)=\sg(s)=\sg(t)\\r\equiv s\not \equiv t \pmod 2}}&q^{rs+rt+st}x^ry^sz^t.
\end{align}

For the remainder of this section we will give the analogs of the proposition and lemmas to what one finds in Sections \ref{section:functional} and \ref{section:analytic}.  Once that is done, the proof is exactly the same as what one finds in Section \ref{section:proof}, so we will omit it.  We also omit the proofs to the proposition and lemmas because they are similar to those of the previous sections.

\begin{proposition}  \label{proposition:prop-functional-diff} For  $x,y,z\in \mathbb{C}^*$ where $|q|<|y|<1$, $|q|<|z|<1$, and $x$ is generic, the functions $F(x,y,z;q)$ and $G(x,y,z;q)$ satisfy the functional equation
\begin{align}
M(q^2x,y,z;q)&=\frac{xq}{yz}M(x,y,z;q)+z \cdot \frac{J_4^3j(q^2y^2z^2;q^4)}{j(q^2y^2;q^4)j(z^2;q^4)}\\
&\ \ \ \ \ -\frac{xq}{y}\cdot \frac{J_4^3j(q^2x^2z^2;q^4)}{j(q^2x^2;q^4)j(z^2;q^4)}+\frac{q^2}{zy^2}\cdot \frac{J_4^3j(x^2y^2;q^4)}{j(q^2x^2;q^4)j(q^2y^2;q^4)}.\notag
\end{align}
\end{proposition}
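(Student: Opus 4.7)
The plan is to verify the functional equation separately for $F(x,y,z;q)$ and for $G(x,y,z;q)$, following the same template as Propositions \ref{proposition:prop-functional} and \ref{proposition:prop-functional-same}.

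For $F(x,y,z;q)$, both summands share the denominator $1-x^2 q^{4s+4t+2}$. Substituting $q^2x$ for $x$ replaces this denominator by $1-x^2 q^{4s+4t+6}$, which matches back to the denominator of $\frac{xq}{yz}F(x,y,z;q)$ after the simultaneous shift $s\mapsto s+1$, $t\mapsto t+1$. Applying Hickerson's index-shift identity (\ref{equation:H2id1.15}) with $R=S=1$ makes the bulk double sums cancel in $F(q^2x,y,z;q)-\frac{xq}{yz}F(x,y,z;q)$, leaving two boundary single-sums (over $s$ at $t=0$ and over $t$ at $s=0$) together with a boundary constant of the form $\sum_{\sg(s)=\sg(t)}\sg(s)q^{4st}y^{2s}z^{2t}$, all evaluable in base $q^4$. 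Each boundary single-sum is of Kronecker type, and Kronecker's identity (\ref{equation:kronecker-original}) evaluates them to produce the three theta-quotient correction terms on the right-hand side of the proposition.

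For $G(x,y,z;q)$, I would transform each of its four constituent terms under $x\mapsto q^2x$ and then sum. The first term has an $x$-free theta-quotient, so only its $m$-function changes; a single application of (\ref{equation:mxqz-fnq-x}) in base $q^2$ produces $\frac{qx}{yz}$ times the original first term plus the constant correction $z\cdot\frac{J_4^3 j(q^2y^2z^2;q^4)}{j(q^2y^2;q^4)j(z^2;q^4)}$. The second term has $x$ in both the theta-quotient and the $m$-function: I would apply (\ref{equation:j-elliptic}) to the ratio $j(q^6x^2z^2;q^4)/j(q^6x^2;q^4)$ to extract a factor $z^{-2}$, then use (\ref{equation:mxqz-fnq-z}) to absorb a $q^2$ into the $z$-slot of the $m$-function, and finally use (\ref{equation:mxqz-altdef0intro}) to bring $m(-y/(qxz),q^2,\cdot)$ back to $m(-qy/(xz),q^2,\cdot)$ at the cost of a $(1-\cdot)$ factor, yielding $\frac{qx}{yz}$ times the original second term plus the correction $-\frac{qx}{y}\cdot\frac{J_4^3 j(q^2x^2z^2;q^4)}{j(q^2x^2;q^4)j(z^2;q^4)}$. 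The fourth term is handled by the parallel argument in the pair $x,y$ and yields $\frac{qx}{yz}$ times the original fourth term plus the correction $+\frac{q^2}{zy^2}\cdot\frac{J_4^3 j(x^2y^2;q^4)}{j(q^2x^2;q^4)j(q^2y^2;q^4)}$. The third term is a pure theta-quotient; applying (\ref{equation:j-elliptic}) to each of the $x$-dependent factors $j(x;q)$, $j(qxz;q^2)$, and $j(-qx;q^2)$ scales the entire term by exactly $\frac{qx}{yz}$ with no constant correction. Summing the four gives $G(q^2x,y,z;q)=\frac{qx}{yz}G(x,y,z;q)$ plus precisely the three theta-correction terms claimed.

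The main obstacle is purely the bookkeeping of powers of $q$, $x$, $y$, $z$ and the accumulated signs coming from repeated use of ellipticity. Because the different-parity case breaks the symmetry among the three variables (with $z$ playing a distinguished role in both the definition of $F$ and the coefficient structure of $G$), the asymmetric prefactors $-\frac{qx}{y}$ and $+\frac{q^2}{zy^2}$ in the second and fourth correction terms must emerge with the correct sign and monomial; that verification is the main arithmetic check. No conceptual new ideas beyond those used in Proposition \ref{proposition:prop-functional-same} are required.
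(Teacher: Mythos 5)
The paper gives no proof of this proposition --- it states that the proofs in Section \ref{section:different} are omitted because they are similar to those of Sections \ref{section:functional} and \ref{section:same} --- so your template is exactly the intended one, and your treatment of $G(x,y,z;q)$ checks out: the first term picks up the $z\,J_4^3j(q^2y^2z^2;q^4)/(j(q^2y^2;q^4)j(z^2;q^4))$ correction via (\ref{equation:mxqz-fnq-x}); the second and fourth terms, after (\ref{equation:j-elliptic}), (\ref{equation:mxqz-fnq-z}) and (\ref{equation:mxqz-altdef0intro}), produce exactly $-\frac{qx}{y}\cdot\frac{J_4^3j(q^2x^2z^2;q^4)}{j(q^2x^2;q^4)j(z^2;q^4)}$ and $+\frac{q^2}{zy^2}\cdot\frac{J_4^3j(x^2y^2;q^4)}{j(q^2x^2;q^4)j(q^2y^2;q^4)}$; and the pure theta term scales by exactly $\frac{qx}{yz}$ with no correction.

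Your $F$-side bookkeeping, however, is wrong as stated. Replacing $x$ by $q^2x$ turns the common denominator $1-x^2q^{4s+4t+2}$ into $1-x^2q^{4s+4t+6}$, which is reached from the original by a shift of \emph{one} index by $1$ (raising the exponent by $4$); the simultaneous shift $s\mapsto s+1$, $t\mapsto t+1$ with $R=S=1$ in (\ref{equation:H2id1.15}) raises it by $8$, so the bulk double sums would not cancel the way you describe. What actually happens is a crosswise pairing: the first summand of $F(q^2x,y,z;q)$ matches the \emph{second} summand of $\frac{xq}{yz}F(x,y,z;q)$ after $t\mapsto t+1$ (their numerators differ by exactly the factor $x^2q^{4s+4t+6}$, so the difference telescopes to the constant $\sum_{\sg(s)=\sg(t)}\sg(s)q^{4st}(q^2y^2)^s(z^2)^tz$, giving the $z\,j(q^2y^2z^2;q^4)$ term --- note the argument is $q^2y^2$, not $y^2$ as in your sketch --- while the $S=1$ boundary row $t=0$ gives $-\frac{x^2q^2}{z}\sum_s\frac{(q^2y^2)^s}{1-q^2x^2q^{4s}}$, which via (\ref{equation:kronecker-original}) and (\ref{equation:j-elliptic}) is the $+\frac{q^2}{zy^2}$ term); and the second summand of $F(q^2x,y,z;q)$ is \emph{identically} the first summand of $\frac{xq}{yz}F(x,y,z;q)$ after $s\mapsto s+1$, so their difference is just the $R=1$ boundary row $s=0$, namely $-\frac{xq}{y}\sum_t\frac{(z^2)^t}{1-q^2x^2q^{4t}}$, which is the $-\frac{xq}{y}$ term. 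With that correction the computation closes and yields precisely the three claimed theta-quotients; the rest of your plan is sound.
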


We decompose the function
\begin{equation}
G(x,y,z;q)=G_1(x,y,z;q)+G_2(x,y,z;q),
\end{equation}
where
\begin{align}
G_1&(x,y,q): =z  \frac{j(qyz;q^2)}{j(q^2y^2;q^4)j(z^2;q^4)} 
 \frac{J_4^4}{J_2^2}\sum_{k}\frac{q^{k^2}(yz)^k}{1-q^{2k}x} \\
&  + z  \frac{j(qxz;q^2)}{j(q^2x^2;q^4)j(z^2;q^4)} 
 \frac{J_4^4}{J_2^2}\sum_{k}\frac{q^{k^2}(xz)^k}{1-q^{2k}y}
  - \frac{q}{xy}  \frac{j(xy;q^2)}{j(q^2x^2;q^4)j(q^2y^2;q^4)} 
 \frac{J_4^4}{J_2^2}\sum_{k}\frac{q^{k^2-k}(xy)^k}{1-q^{2k-1}z}\notag 
\end{align}
and
\begin{align}
G_2(x,y,q): = - \frac{zJ_1^3J_2^3}{j(x;q)j(y;q)j(z;q)}\frac{j(xy;q^2)j(qxz;q^2)j(qyz;q^2)}{j(-xq;q^2)j(-yq;q^2)j(-z;q^2)}.
\end{align}

For $x\ne 0$, potential singularities of the function
\begin{align}
H(x,y,z;q):=F(x,y,z;q)-G(x,y,z;q),
\end{align}
are limited to simple poles at $x=q^n$ and $x=-q^{2n+1}$ for $n\in \mathbb{Z}$.  The following series of lemmas demonstrate that the respective residues of any such poles always sum to zero.  Hence, $H(x,y,z;q)$ is analytic for $x\ne 0$, and one then proceeds as in Section \ref{section:proof}.

\begin{lemma} \label{lemma:F-n-diff} For fixed $y,z\in\mathbb{C}^*$ where $|q|<|y|<1$ and $|q|<|z|<1$, the function $F(x,y,z;q)$ has simple poles at $x_0^2=q^{4n+2}$, where $n\in\mathbb{Z}$, with respective residues
\begin{equation}
\frac{1}{2} \cdot \sum_{s=1}^{2n}\frac{q^{s(2n-s+1)+2n+1}}{y^{s}z^{2n-s+1}}
\end{equation}
for $x_0=q^{2n+1}$, and
\begin{equation}
-\frac{1}{2} \cdot \sum_{s=1}^{2n}\frac{(-1)^sq^{s(2n-s+1)+2n+1}}{y^{s}z^{2n-s+1}}
\end{equation}
for $x_0=-q^{2n+1}$.
\end{lemma}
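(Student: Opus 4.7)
The plan is to carry out a direct residue computation in the spirit of Lemma \ref{lemma:F1-n-pos-same}, adapted to the redefinition of $F(x,y,z;q)$ in this section. Without loss of generality assume $n\ge 1$ (the case $n=0$ is the empty sum by convention, and $n\le -1$ follows by the same computation with signs reversed). Both summands in $F(x,y,z;q)$ share the denominator $1-x^2q^{4s+4t+2}$, which vanishes at $x_0=\pm q^{2n+1}$ precisely when $s+t=-n-1$. Under the constraint $\sg(s)=\sg(t)$, the contributing indices lie in the region $s,t<0$, carrying the sign factor $\sg(s)=-1$, and are parametrized by $s=-k$, $t=k-n-1$ with $k=1,\ldots,n$.

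First I factor $1-x^2q^{-4n-2}=-q^{-4n-2}(x-q^{2n+1})(x+q^{2n+1})$, which gives the limiting factors $\lim_{x\to q^{2n+1}}(x-q^{2n+1})/(1-x^2q^{-4n-2})=-q^{2n+1}/2$ and $\lim_{x\to -q^{2n+1}}(x+q^{2n+1})/(1-x^2q^{-4n-2})=+q^{2n+1}/2$. Combining these with the $-1$ from the $\sg(s)=-1$ factor and substituting $x=x_0$ into the prefactor $xyq$ of the second piece of $F$, the residue at $x_0=\pm q^{2n+1}$ reduces to
\[
\tfrac{1}{2}\bigl(z\,q^{2n+1}\,\Sigma_{\mathrm{I}}\pm y\,q^{4n+3}\,\Sigma_{\mathrm{II}}\bigr),
\]
where $\Sigma_{\mathrm{I}}:=\sum q^{4st+2s}y^{2s}z^{2t}$ and $\Sigma_{\mathrm{II}}:=\sum q^{4st+2s+4t}y^{2s}z^{2t}$ both range over $\{s+t=-n-1,\ s,t<0\}$, and the outer $\pm$ records the sign change in the $\Sigma_{\mathrm{II}}$-term between $x_0=q^{2n+1}$ and $x_0=-q^{2n+1}$.

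Substituting $s=-k$, $t=k-n-1$ and simplifying exponents, I will verify that $\Sigma_{\mathrm{I}}$ contributes exactly the even-index terms $s=2k$ ($k=1,\ldots,n$) of the target $\tfrac{1}{2}\sum_{s=1}^{2n}q^{s(2n-s+1)+2n+1}/(y^sz^{2n-s+1})$, while $\Sigma_{\mathrm{II}}$ contributes the odd-index terms $s=2k-1$. For $x_0=q^{2n+1}$ the two pieces combine cleanly into the claimed formula. For $x_0=-q^{2n+1}$ the sign on $\Sigma_{\mathrm{II}}$ flips relative to $\Sigma_{\mathrm{I}}$; packaging the resulting pattern as a single overall factor $-(-1)^s$ against the full sum $s=1,\ldots,2n$ yields the alternating form as stated.

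The only real difficulty is sign and exponent bookkeeping: matching the even and odd parts of the target sum to the two halves of $F$, and tracking how the $(-1)^s$ in the minus case arises from the interplay of the $\sg(s)=-1$ factor, the quadratic factorization of the denominator, and the evaluation $x=\pm q^{2n+1}$ inside the $xyq$ prefactor. No new identities beyond the factorization of a geometric denominator are required.
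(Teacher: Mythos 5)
Your approach is the right one and is exactly what the paper intends: the paper omits the proof of this lemma, noting only that it is ``similar to those of the previous sections,'' i.e.\ to the direct residue computation of Lemma \ref{lemma:F1-n-pos-same}, which is precisely the template you follow. The identification of the contributing indices ($s+t=-n-1$ with $s,t<0$, parametrized by $s=-k$, $t=k-n-1$, $k=1,\dots,n$), the factorization of the denominator, the two limits $\mp q^{2n+1}/2$, and the matching of $\Sigma_{\mathrm I}$ to the even-index terms and $\Sigma_{\mathrm{II}}$ to the odd-index terms of the target sum are all correct.

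However, your displayed intermediate formula puts the $\pm$ on the wrong term, and as written it is inconsistent with the final answer you claim. Tracking the signs: the $\Sigma_{\mathrm I}$ contribution is $(-1)\cdot(\mp q^{2n+1}/2)\cdot z\,\Sigma_{\mathrm I}=\pm\tfrac12 zq^{2n+1}\Sigma_{\mathrm I}$, since the sign of the limit flips between the two poles; in the $\Sigma_{\mathrm{II}}$ contribution, that flip cancels against the sign of $x_0$ entering through the prefactor $xyq$, so that term equals $+\tfrac12 yq^{4n+3}\Sigma_{\mathrm{II}}$ at \emph{both} poles. The correct reduction is therefore $\tfrac12\bigl(\pm zq^{2n+1}\Sigma_{\mathrm I}+yq^{4n+3}\Sigma_{\mathrm{II}}\bigr)$: the even-index ($\Sigma_{\mathrm I}$) terms flip sign at $x_0=-q^{2n+1}$ while the odd-index ($\Sigma_{\mathrm{II}}$) terms do not, which is exactly the pattern $-(-1)^s$ in the statement. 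With your placement of the $\pm$ you would instead obtain $+\tfrac12\sum_{s=1}^{2n}(-1)^s(\cdots)$, the negative of the claimed residue. Fix that one sign and the argument goes through as planned.
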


\begin{lemma} \label{lemma:G1-sq-diff} For fixed $y,z\in\mathbb{C}^*$ where $|q|<|y|<1$ and $|q|<|z|<1$, the function $G_1(x,y,z;q)$ has simple poles at $x_0^2=q^{4n+2}$, where $n\in\mathbb{Z}$, with respective residues
\begin{equation}
  \frac{1}{2} \cdot\sum_{k=1}^{2n}  \frac{q^{k(2n-k+1)+2n+1}}{y^{k}z^{2n-k+1}}
  - \frac{1}{2} \cdot \frac{q^{n^2+3n+1}}{y^{n}z^{n}}
\cdot \frac{J_2^8}{J_1^2J_4^2}\cdot \frac{j(qyz;q^2)}{j(y;q^2)j(-z;q^2)j(-qy;q^2)j(qz;q^2)}
\end{equation}
for $x_0=q^{2n+1}$, and
\begin{equation}
 -\frac{1}{2}\cdot \sum_{k=1}^{2n}\frac{(-1)^kq^{k(2n-k+1)+2n+1}}{y^{k}z^{2n-k+1}}  
  -\frac{1}{2}\cdot  \frac{(-1)^nq^{n^2+3n+1}}{y^{n}z^{n}} 
\cdot \frac{J_1^4}{J_2^2}\cdot \frac{j(qyz;q^2)}{j(y;q)j(z;q)}
\end{equation}
for $x_0=-q^{2n+1}$.
\end{lemma}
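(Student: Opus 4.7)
The plan is to mirror the proof of Lemma \ref{lemma:G1-n-sq-same}. At the odd-parity points $x_0 = \pm q^{2n+1}$, only the second and third summands of $G_1(x,y,z;q)$ are singular: their denominators contain $j(q^2 x^2;q^4)$, whose zeros are exactly the odd half-integer powers of $q$, whereas the first summand's $x$-dependence is through $1/(1-q^{2k}x)$ and contributes poles only at even powers of $q$.

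First I would use Proposition \ref{proposition:H1Thm1.3} with $\beta = q^2$, $b=2$, $m=4$ to compute $\operatorname{Res}_{x=x_0} 1/j(q^2 x^2; q^4)$; the relevant index of the proposition is $k = n+1$, which makes the sign and $q$-power work out as $(-1)^{n+2} q^{4\binom{n+1}{2}} x_0/(2J_4^3)$. Next I would rewrite each contributing $k$-sum as $j(\cdot;q^2)\, m(\cdot,q^2,\cdot)$ via \eqref{equation:mdef-eq} with $q$ replaced by $q^2$, so that after specialising to $x = x_0$ one obtains Appell--Lerch factors of the form $m(-qy/(x_0 z), q^2, -q x_0 z)$ from the second summand and $m(-qz/(x_0 y), q^2, -x_0 y)$ from the third. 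The theta products that emerge are collapsed by \eqref{equation:j-elliptic} together with the base-changed version of \eqref{equation:j-mod-dec}, namely $j(u;q^2) j(-u;q^2) = (J_2/J_4)\, j(u^2;q^4)$.

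The next step is to strip the factor $q^{2n}$ out of the first argument of each $m$-function: the flip \eqref{equation:mxqz-flip} together with the $z$-change \eqref{equation:mxqz-fnq-newz}, followed by iteration of \eqref{equation:mxqz-induction} with base $q^2$, generates two finite sums of length $n$ which combine (with alternating signs in the $x_0 = -q^{2n+1}$ case) into the claimed $\tfrac{1}{2}\sum_{k=1}^{2n} q^{k(2n-k+1)+2n+1}/(y^k z^{2n-k+1})$, plus a residual difference of two $m$-functions sharing a common first argument but having distinct $z$-arguments. Evaluating that difference via the appropriate specialisation of Proposition \ref{proposition:changing-z} (the odd-parity cousin of \eqref{equation:change-n-even}), and then applying product rearrangements such as $\overline{J}_{1,2} = J_2^5/(J_1^2 J_4^2)$ and $\overline{J}_{0,1} = 2J_2^2/J_1$, reduces the resulting theta quotient to $J_2^8/(J_1^2 J_4^2) \cdot j(qyz;q^2)/(j(y;q^2) j(-z;q^2) j(-qy;q^2) j(qz;q^2))$ for $x_0 = q^{2n+1}$, and to the simpler $J_1^4/J_2^2 \cdot j(qyz;q^2)/(j(y;q) j(z;q))$ for $x_0 = -q^{2n+1}$.

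The main obstacle will be the bookkeeping of signs, $q$-powers, and $J_k$-cancellations through the flip/shift/iteration chain, and in particular choosing the specialisation of Proposition \ref{proposition:changing-z} with parameters $z_0, z_1$ matching the actual $m$-difference that arises, so that the surviving theta quotient collapses precisely to the stated form rather than a superficially similar but inequivalent one.
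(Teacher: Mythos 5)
Your outline is correct and is precisely the adaptation of the paper's proof of Lemma \ref{lemma:G1-n-sq-same} that the author intends: the paper omits the proof of this lemma entirely, stating only that it is ``similar to those of the previous sections,'' and your steps (isolating the two summands with $j(q^2x^2;q^4)$ in the denominator, applying Proposition \ref{proposition:H1Thm1.3} with $k=n+1$, converting the bilateral sums to $j\cdot m$ form, stripping $q^{2n}$ via \eqref{equation:mxqz-flip}, \eqref{equation:mxqz-fnq-newz} and \eqref{equation:mxqz-induction}, and finishing with the appropriate specialisation of Proposition \ref{proposition:changing-z}) are exactly that argument. The only caveat is among the bookkeeping items you already flag: the base-changed form of \eqref{equation:j-mod-dec} should read $j(u;q^2)j(-u;q^2)=(J_2^2/J_4)\,j(u^2;q^4)$, correcting the exponent typo in the paper's displayed identity.
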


\begin{lemma} \label{lemma:G1-n-diff} For fixed $y,z\in\mathbb{C}^*$ where $|q|<|y|<1$ and $|q|<|z|<1$, the function $G_1(x,y,z;q)$ has simple poles at $x_0=q^{2n}$, where $n\in\mathbb{Z}$, with respective residues
\begin{equation}
- \frac{zq^{n^2+2n}}{y^nz^n}\cdot \frac{J_4^4}{J_2^2} \cdot  \frac{j(qyz;q^2)}{j(q^2y^2;q^4)j(z^2;q^4)} .
\end{equation}
\end{lemma}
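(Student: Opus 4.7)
The plan is to parallel Lemma \ref{lemma:G1-minus2n} in Section \ref{section:analytic}: identify the single summand of $G_1(x,y,z;q)$ responsible for the pole at $x_0 = q^{2n}$ and extract the residue directly from the defining expression, without invoking Proposition \ref{proposition:H1Thm1.3} or any of the Appell--Lerch manipulations used in the heavier lemmas of Section \ref{section:analytic}.

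The key observation is that the second and third summands of $G_1(x,y,z;q)$ are analytic at $x_0 = q^{2n}$. Indeed, the inner sums $\sum_k q^{k^2}(xz)^k/(1-q^{2k}y)$ and $\sum_k q^{k^2-k}(xy)^k/(1-q^{2k-1}z)$ have $x$-independent denominators, so within these two summands the only possible singularities in $x$ arise from the prefactor $1/j(q^2x^2;q^4)$. But by the product expansion of $j$, the factor $j(q^2x^2;q^4)$ vanishes precisely when $q^2x^2 \in q^{4\mathbb{Z}}$, i.e. at $x = \pm q^{2m-1}$ (odd powers of $q$, up to sign), which for generic $q$ is disjoint from the set of even powers $\{q^{2n}\}$.

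Therefore the entire residue at $x_0 = q^{2n}$ comes from the first summand
\begin{equation*}
z \cdot \frac{j(qyz;q^2)}{j(q^2y^2;q^4)j(z^2;q^4)} \cdot \frac{J_4^4}{J_2^2} \sum_{k \in \mathbb{Z}}\frac{q^{k^2}(yz)^k}{1-q^{2k}x},
\end{equation*}
and within this sum only from the single term $k = -n$, where the factor $1 - q^{-2n}x$ vanishes. Using the elementary limit $\lim_{x \to q^{2n}}(x-q^{2n})/(1-q^{-2n}x) = -q^{2n}$ and multiplying by the prefactor and by $q^{n^2}(yz)^{-n}$ yields exactly the claimed residue. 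There is no nontrivial identity required; the only step of any substance is the parity observation eliminating two of the three summands, and this is not a real obstacle. This explains why the author omits the argument as being similar to that of Lemma \ref{lemma:G1-minus2n}.
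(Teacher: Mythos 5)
Your proposal is correct and matches the approach the paper intends: the proof is omitted there as being analogous to Lemma~\ref{lemma:G1-minus2n} and Lemma~\ref{lemma:G1-n-odd-same}, whose proofs are exactly this ``read the residue off the defining series'' argument, with the parity observation that $j(q^2x^2;q^4)$ vanishes only at $x=\pm q^{2m+1}$ confining the pole at $x_0=q^{2n}$ to the first summand and its $k=-n$ term. Your residue computation $-q^{2n}\cdot q^{n^2}(yz)^{-n}$ times the prefactor reproduces the stated value, so nothing is missing.
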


\begin{lemma} \label{lemma:G2-n-diff} For fixed $y,z\in\mathbb{C}^*$ where $|q|<|y|<1$ and $|q|<|z|<1$, the function $G_2(x,y,z;q)$ has simple poles at $x_0=q^{n}$, where $n\in\mathbb{Z}$, with respective residues
\begin{equation}
  \frac{zq^{n^2+2n}}{y^nz^n}\cdot  \frac{J_4^4}{J_2^2}\cdot   \frac{j(qyz;q^2)}{j(y^2q^2;q^4)j(z^2;q^4)}
\end{equation}
for $x_0=q^{2n}$, and 
\begin{equation}
\frac{1}{2}\cdot \frac{q^{n^2+3n+1}}{y^nz^n}\cdot  \frac{J_2^8}{J_1^2J_4^2}\cdot 
\frac{j(qyz;q^2)}{j(y;q^2)j(-z;q^2)j(-yq;q^2)j(qz;q^2)}
\end{equation}
for $x_0=q^{2n+1}$. 
\end{lemma}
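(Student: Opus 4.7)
The function $G_2(x,y,z;q)$ has $j(x;q)$ in its denominator, so all its candidate poles in $x$ lie at integral powers of $q$; the other $x$-dependent factors $j(-xq;q^2)$ in the denominator and $j(xy;q^2)j(qxz;q^2)$ in the numerator are nonvanishing on this set. To separate the even and odd powers in a clean way, the plan is to first apply the dissection (\ref{equation:j-mod-2}), writing
\begin{equation*}
j(x;q) \;=\; \frac{J_1}{J_2^2}\,j(x;q^2)\,j(xq;q^2).
\end{equation*}
At $x_0 = q^{2n}$ only $j(x;q^2)$ vanishes, while at $x_0 = q^{2n+1}$ only $j(xq;q^2)$ vanishes. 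Hence the poles are indeed simple and their residues can be computed one factor at a time.

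For $x_0 = q^{2n}$ I would invoke Proposition \ref{proposition:H1Thm1.3} with $\beta=1$, $b=1$, $m=2$, $k=n$ to obtain
$\operatorname{Res}_{x=q^{2n}}\bigl(1/j(x;q^2)\bigr) = (-1)^{n+1}q^{n^2+n}/J_2^3$,
then substitute $x = q^{2n}$ into all remaining theta factors and strip off the powers of $q^{2n}$ using (\ref{equation:j-elliptic}). This produces $j(y;q^2)$, $j(qz;q^2)$, $j(-q;q^2)$ in the denominators alongside the surviving $j(qz;q^2)$, $j(-yq;q^2)$, $j(-z;q^2)$, $j(qyz;q^2)$. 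Consolidating the accumulated signs and powers of $q$, and then converting $j(q;q^2) = J_1^2/J_2$, $\overline{J}_{0,2} = 2J_4^2/J_2$, $\overline{J}_{1,2} = J_2^5/(J_1^2J_4^2)$, and (\ref{equation:j-mod-2}) applied to $j(y;q) = (J_1/J_2^2)j(y;q^2)j(qy;q^2)$ should collapse everything to the stated form $\tfrac{zq^{n^2+2n}}{y^nz^n}\cdot\tfrac{J_4^4}{J_2^2}\cdot\tfrac{j(qyz;q^2)}{j(y^2q^2;q^4)j(z^2;q^4)}$, where finally (\ref{equation:j-mod-dec}) is used in reverse to recognize $j(q^2y^2;q^4)j(z^2;q^4)$.

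For $x_0 = q^{2n+1}$ the situation is symmetric: Proposition \ref{proposition:H1Thm1.3} with $\beta = q$, $b=1$, $m=2$, $k = n+1$ gives $\operatorname{Res}_{x = q^{2n+1}}\bigl(1/j(xq;q^2)\bigr) = (-1)^n q^{n^2+3n+1}/J_2^3$. After substituting $x = q^{2n+1}$ into the remaining factors and applying (\ref{equation:j-elliptic}) to clear the $q^{2n}$ shifts, the surviving numerator theta $j(qyz;q^2)$ and the denominator theta functions $j(y;q^2)$, $j(-z;q^2)$, $j(-yq;q^2)$, $j(qz;q^2)$ match exactly those appearing in the stated residue, once the same product rearrangements are used to dispose of the leftover $J_k$'s. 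The factor $\tfrac12$ will arise naturally from the $J_1^3J_2^3$ numerator combining with $J_1^2$ produced by $j(q;q^2)$ and with $\overline{J}_{0,2} = 2J_4^2/J_2$ used to handle $j(-1;q^2)$ when it appears in the intermediate simplification.

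The main obstacle is purely bookkeeping: tracking the signs $(-1)^{\cdots}$, the exponents of $q$ (several of which change form between $n(n-1)$, $\binom{n+1}{2}$, and $n^2+2n$ during the reductions), and the precise combination of $J_1, J_2, J_4, \overline{J}_{0,2}, \overline{J}_{1,2}$ that collapses to the compact form on the right-hand side. Fortunately, the computation is structurally identical to those in Lemmas \ref{lemma:G2-n} and \ref{lemma:G2-n-same}, so I would proceed by direct analogy, using those proofs as templates and only adjusting for the presence of the extra $z$ factor, the shift from $j(xz;q^2)$ to $j(qxz;q^2)$, and the shift from $j(-x;q^2)$ to $j(-xq;q^2)$ in the definition of $G_2$ here.
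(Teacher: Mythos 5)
The paper omits the proof of this lemma, stating only that it is similar to the arguments of the previous sections; your proposal correctly reconstructs that intended argument by following the template of Lemma \ref{lemma:G2-n-same} (dissect $j(x;q)$ via (\ref{equation:j-mod-2}), apply Proposition \ref{proposition:H1Thm1.3} to whichever factor vanishes, shift the remaining theta functions with (\ref{equation:j-elliptic}), and finish with the rearrangements $J_{1,2}=J_1^2/J_2$, $\overline{J}_{0,2}=2J_4^2/J_2$, $\overline{J}_{1,2}=J_2^5/J_1^2J_4^2$). I verified that your residues $(-1)^{n+1}q^{n^2+n}/J_2^3$ for $1/j(x;q^2)$ at $x_0=q^{2n}$ and $(-1)^nq^{n^2+3n+1}/J_2^3$ for $1/j(xq;q^2)$ at $x_0=q^{2n+1}$ are correct, and that the outlined simplification collapses to both stated expressions, including the factor $\tfrac12$ arising from $j(-1;q^2)=2J_4^2/J_2$ in the odd case.
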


\begin{lemma} \label{lemma:G2-n-neg-diff} For fixed $y,z\in\mathbb{C}^*$ where $|q|<|y|<1$ and $|q|<|z|<1$, the function $G_2(x,y,z;q)$ has simple poles at $x_0=-q^{2n+1}$, where $n\in\mathbb{Z}$, with respective residues
\begin{equation}
\frac{1}{2}\cdot \frac{(-1)^{n}q^{n^2+3n+1}}{y^nz^{n}}\cdot \frac{J_1^4}{J_2^2}\cdot \frac{j(qyz;q^2)}{j(y;q)j(z;q)}.
\end{equation}
\end{lemma}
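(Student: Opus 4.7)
\medskip

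\noindent \textbf{Proof proposal for Lemma \ref{lemma:G2-n-neg-diff}.}  The plan is to mimic the residue computation already used in Lemma \ref{lemma:G2-n-neg-same} (and elsewhere in Sections \ref{section:analytic} and \ref{section:same}), since $G_2(x,y,z;q)$ now differs from its earlier incarnation only by a shift of arguments inside the theta-function building blocks.  Looking at the denominator of $G_2(x,y,z;q)=-zJ_1^3J_2^3\,j(xy;q^2)j(qxz;q^2)j(qyz;q^2)\big/\bigl(j(x;q)j(y;q)j(z;q)j(-xq;q^2)j(-yq;q^2)j(-z;q^2)\bigr)$, the only factor that produces a pole of the form $x_0=-q^{2n+1}$ is $j(-xq;q^2)$; the remaining denominator factors contribute poles at $x=q^n$ (from $j(x;q)$) or at $x=-q^{2n}$ (from $j(-x;q^2)$, but that particular factor is absent here), neither of which is relevant.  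So the computation reduces to pulling the residue out of that single theta factor.

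First I would apply Proposition~\ref{proposition:H1Thm1.3} to $1/j(-xq;q^2)$ with $\beta=-q$, $b=1$, $m=2$, so the poles are $x_0=-q^{2k-1}$ and $x_0=-q^{2n+1}$ corresponds to $k=n+1$.  The resulting residue is
\begin{equation*}
\lim_{x\to -q^{2n+1}}(x+q^{2n+1})\cdot \frac{1}{j(-xq;q^2)}=-\frac{(-1)^n q^{n^2+3n+1}}{J_2^3}.
\end{equation*}
Then I would evaluate the remaining theta factors at $x_0=-q^{2n+1}$, using the elliptic transformation law (\ref{equation:j-elliptic}) to write
\begin{equation*}
j(-q^{2n+1}y;q^2)=q^{-n^2}y^{-n}j(-qy;q^2),\quad
j(-q^{2n+2}z;q^2)=q^{-n^2-n}z^{-n-1}j(-z;q^2),
\end{equation*}
and
\begin{equation*}
j(-q^{2n+1};q)=q^{-2n^2-n}j(-1;q).
\end{equation*}

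The next step is mostly bookkeeping: substitute these evaluations into $G_2$, observe that the $j(-z;q^2)$ and $j(-qy;q^2)$ produced in the numerator cancel identical factors sitting in the denominator of $G_2$, and check that all the powers of $q$ collapse (the $q^{-2n^2-n}$ from $j(-q^{2n+1};q)$ matches the product $q^{-n^2}\cdot q^{-n^2-n}$ from the two evaluated numerator thetas, so only the residue-induced $q^{n^2+3n+1}$ survives).  What remains is
\begin{equation*}
z\cdot y^{-n}z^{-n-1}\cdot(-1)^n q^{n^2+3n+1}\cdot \frac{J_1^3\,j(qyz;q^2)}{j(-1;q)j(y;q)j(z;q)}.
\end{equation*}

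Finally I would apply the product rearrangement $j(-1;q)=2J_2^2/J_1$ (the same identity used in the proof of Lemma~\ref{lemma:G2-n-neg-same}), which converts $J_1^3/j(-1;q)$ into $J_1^4/(2J_2^2)$ and produces the claimed formula.  I do not anticipate a genuine obstacle here: the argument is a routine residue calculation and the simplification is dictated by the shape of the answer.  The only point that requires a little care is tracking the $(-1)$'s and the exact exponent $n^2+3n+1$ through the elliptic transformations; a sign error in $j(-q^{2n+1}y;q^2)$ or $j(-q^{2n+1};q)$ would propagate and could easily be mistaken for a missing factor of $(-1)^n$ in the final answer, so I would double-check those evaluations against the heuristic that $j(qu;q)$ flips the sign of the exponent-zero term.
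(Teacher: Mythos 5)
Your proposal is correct and follows exactly the route the paper intends: the paper omits this proof as "similar to those of the previous sections," i.e.\ to the proof of Lemma \ref{lemma:G2-n-neg-same}, which likewise isolates the unique pole-producing factor ($j(-xq;q^2)$ here) via Proposition \ref{proposition:H1Thm1.3}, evaluates the remaining theta factors with (\ref{equation:j-elliptic}), and finishes with $j(-1;q)=2J_2^2/J_1$. I checked the residue $-(-1)^nq^{n^2+3n+1}/J_2^3$, the three elliptic-shift evaluations, and the cancellation of the $q$-powers, and everything lands on the stated formula including the factor $\tfrac12$.
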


\section{Concluding remarks}\label{section:starting}
Understanding the starting point for our initial guesses should prove useful for related and higher-dimensional generalizations.  We define
\begin{equation}
\mathcal{F}(x,y,z;q):=\Big ( \sum_{r,s,t \ge 0}+\sum_{r,s,t<0} \Big)q^{rs+rt+st}x^ry^sz^t,
\end{equation}
and state the following lemma
\begin{lemma} \label{lemma:starting_point} For $|q|<|y|<1$ and $|q|<|z|<1$, we have
\begin{align}
&\mathcal{F}(x,y,z;q)-x\mathcal{F}(x,qy,qz;q)=\frac{J_1^3j(yz;q)}{j(y;q)j(z;q)}.\label{equation:triple-functional}
\end{align}
\end{lemma}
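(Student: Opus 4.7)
The plan is to reduce the stated identity directly to Kronecker's two-variable identity (\ref{equation:kronecker}) via a single index shift. The key observation is that the bilinear form in the exponent is invariant under the combined transformation $r\mapsto r+1$, $y\mapsto qy$, $z\mapsto qz$: namely
\[
(r+1)s+(r+1)t+st = rs+rt+st+s+t,
\]
so the extra factor $q^{s+t}$ coming from $(qy)^s(qz)^t$ is exactly what is needed to absorb the index shift on $r$.

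Concretely, I would split $\mathcal{F}(x,y,z;q)$ into its positive octant $r,s,t\ge 0$ and its negative octant $r,s,t<0$. In the positive octant, the substitution $(y,z)\mapsto(qy,qz)$ followed by multiplying by $x$ and reindexing $r'=r+1$ produces
\[
x\sum_{r,s,t\ge 0}q^{rs+rt+st}x^r(qy)^s(qz)^t \;=\; \sum_{r'\ge 1,\ s,t\ge 0}q^{r's+r't+st}x^{r'}y^sz^t,
\]
which agrees with the positive octant of $\mathcal{F}(x,y,z;q)$ except for the missing $r=0$ slice. In the negative octant, the same manipulation sends $r\le -1$ to $r'\le 0$, leaving a \emph{surplus} $r'=0$ slice instead. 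Subtracting, one finds the clean collapse
\[
\mathcal{F}(x,y,z;q)-x\mathcal{F}(x,qy,qz;q)=\Big(\sum_{s,t\ge 0}-\sum_{s,t<0}\Big)q^{st}y^sz^t.
\]

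The right-hand side is precisely the left-hand side of Kronecker's identity (\ref{equation:kronecker}), so it equals $J_1^3 j(yz;q)/\bigl(j(y;q)j(z;q)\bigr)$, finishing the proof. The hypotheses $|q|<|y|<1$ and $|q|<|z|<1$ are exactly those required by (\ref{equation:kronecker}), and they also ensure absolute convergence of the two-dimensional boundary sums that survive.

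The only delicate point is the bookkeeping on the negative octant: because the shift $r\mapsto r+1$ sends $\{r\le -1\}$ to $\{r'\le 0\}$, the surviving slice sits at the interior boundary $r'=0$ and enters with a minus sign, producing the $-\sum_{s,t<0}$ piece needed to match Kronecker's identity. Everything else is formal rearrangement of absolutely convergent series, so I expect no genuine obstacle beyond this index-shift bookkeeping.
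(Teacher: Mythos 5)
Your proposal is correct and follows essentially the same route as the paper: a shift of the index $r$ (equivalently, absorbing the factor $q^{s+t}$ from $(qy)^s(qz)^t$ into the exponent), leaving exactly the boundary slices $\bigl(\sum_{s,t\ge 0}-\sum_{s,t<0}\bigr)q^{st}y^sz^t$, which are then evaluated by Kronecker's identity (\ref{equation:kronecker}). Your octant-by-octant bookkeeping of the missing and surplus $r=0$ slices is just a more explicit writing-out of the paper's one-line index shift.
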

\begin{proof} The proof is just a straightforward shift of indices
{\allowdisplaybreaks \begin{align*}
&\mathcal{F}(x,y,z;q)-x\mathcal{F}(x,qy,qz;q)\\
&=\Big ( \sum_{r,s,t \ge 0}+\sum_{r,s,t<0} \Big)q^{rs+rt+st}x^ry^sz^t-\Big ( \sum_{r,s,t \ge 0}+\sum_{r,s,t<0} \Big)q^{(r+1)s+(r+1)t+st}x^{r+1}y^sz^t\\
&=\Big ( \sum_{r,s,t \ge 0}+\sum_{r,s,t<0} \Big)q^{rs+rt+st}x^ry^sz^t\\
&\ \ \ \ \ -\Big [ \Big ( \sum_{r,s,t \ge 0}+\sum_{r,s,t<0} \Big)q^{rs+rt+st}x^{r}y^sz^t  -\sum_{s,t}\sg(s,t)q^{st}y^sz^t\Big ].
\end{align*}}%
Using (\ref{equation:kronecker}), the result follows.
\end{proof}

If we iterate (\ref{equation:triple-functional}), we obtain,
{\allowdisplaybreaks \begin{align*}
\mathcal{F}(x,y,z;q)&\sim \frac{J_1^3j(yz;q)}{j(y;q)j(z;q)}\cdot \Big [ 1+x q^{-1}(yz)^{-1} +x^2 \cdot q^{-4}(yz)^{-2} 
+x^3 q^{-9}(yz)^{-3}+\cdots\\
&\sim \frac{J_1^3j(yz;q)}{j(y;q)j(z;q)}\cdot \Big [ 1+(qx/yz) q^{-2} +(qx/yz)^2 q^{-6}
+(qx/yz)^3q^{-12}+\cdots\\
&\sim \frac{J_1^3j(yz;q)}{j(y;q)j(z;q)}\cdot \sum_{k\ge 0}(-1)^k(-qx/yz)^kq^{-2\binom{k+1}{2}}\\
&\sim \frac{J_1^3j(yz;q)}{j(y;q)j(z;q)}\cdot m (-qx/yz,q^{2},*),
\end{align*}}%
which suggests as our starting point for Theorem \ref{theorem:result}
{\allowdisplaybreaks \begin{align*}
\mathcal{F}(x,y,z;q)&\sim  \frac{J_1^3j(yz;q)}{j(y;q)j(z;q)}\cdot m (-qx/yz,q^{2},*)+ \frac{J_1^3j(xy;q)}{j(x;q)j(y;q)}\cdot m (-qz/xy,q^{2},*)\\
&\ \ \ \ \ + \frac{J_1^3j(xz;q)}{j(x;q)j(z;q)}\cdot m (-qy/xz,q^{2},*).
\end{align*}}%

One could also ask if there are proofs alternate to considering functional equations, poles, and residues.  For example, the second referee wondered if multivariate extensions of Ramanujan's ${}_1\psi_1$ notation, see for example \cite{Warn}, could be used to prove Theorem \ref{theorem:result} or similar results.

\end{document}